\newtheorem{theorem}{Theorem}
\newtheorem{lemma}[theorem]{Lemma}
\DeclareMathOperator{\sinc}{sinc}
\newcommand{\dpart}[3]{\partial_{#1}^{#2}{#3}} 
\newcommand{\barp}{\overline{\psi}}
\newcommand{\bardual}{\overline{\varphi}}
\newcommand{\bx}{\boldsymbol{x}}
\newcommand{\BX}{\boldsymbol{X}}
\newcommand{\elmt}[1]{K_{#1}}
\newcommand{\edg}[1]{\mathbf{s}_{#1}}
\newcommand{\Pk}[1]{\mathbb{P}^{#1}}
\newcommand{\Qk}[1]{\mathbb{Q}^{#1}}
\newcommand{\R}{\mathbb{R}}
\newcommand{\D}{\mathcal{D}}
\newcommand{\sphe}{\mathbb{S}^2}
\newcommand{\Sang}{\mathcal{S}}
\newcommand{\ang}{\boldsymbol{\Omega}} 
\newcommand{\angj}[1]{\ang_{#1}}
\newcommand{\angx}[1]{v_{#1}}
\newcommand{\psij}[1]{\psi^{#1}}
\newcommand{\psijh}[1]{\psi^{#1}_h}
\newcommand{\dualj}[1]{\varphi^{#1}}
\newcommand{\sigs}{\sigma_s}
\newcommand{\siga}{\sigma_a}
\newcommand{\sigt}{\sigma_t}
\newcommand{\omj}{\omega_j}
\newcommand{\vard}[1]{(\bx,\ang_{#1},t)} 
\newcommand{\varx}[1]{(x,v_{#1},t)} 
\newcommand{\barf}[1]{\overline{#1}}
\newcommand{\barfh}[1]{\overline{#1}_h}
\newcommand{\Nang}{N_{\ang}} 
\newcommand{\NangO}{N_{\angx{}}} 
\newcommand{\Nx}{N_x} 
\newcommand{\dK}{\partial K}
\newcommand{\dx}{h}
\newcommand{\leg}[1]{L_{#1}}
\newcommand{\radp}[1]{R_{#1}^+}
\newcommand{\radm}[1]{R_{#1}^-}
\newcommand{\radpm}[1]{R_{#1}^\pm}
\newcommand{\I}{\mathcal{I}}
\newcommand{\map}{\zeta} 
\newcommand{\norma}[1]{\|{#1}\|}
\newcommand{\Lp}[3]{\norma{#1}_{L^{#3}({#2})}}
\newcommand{\Hs}[3]{\norma{#1}_{{#3},{#2}}}
\newcommand{\Oh}{\mathcal{O}}
\newcommand{\Th}{\mathcal{T}_h} 
\newcommand{\Eh}{\mathcal{E}_h} 
\newcommand{\Vh}[1]{V_h^{#1}} 
\newcommand{\normal}{\mathbf{n}}
\newcommand{\rgl}{\rangle}
\newcommand{\lgl}{\langle}
\newcommand{\Oavg}[1]{\lgl {#1} \rgl}  
\newcommand{\Kindx}{n}
\newcommand{\dgdeg}{k}
\newcommand{\dgorder}{k+1} 
\newcommand{\test}{\tau_h}
\newcommand{\flux}[2]{\widehat{\left({#1}\right)}_{#2}}
\newcommand{\dgjump}[2]{\llbracket{#1}\rrbracket_{#2}} 
\newcommand{\dgavg}[2]{\{\!\!\{{#1}\}\!\!\}_{#2}} 
\newcommand{\ehj}[1]{e_h^{#1}}
\newcommand{\ejn}[2]{e_{#2}^{#1}}
\newcommand{\bxn}[1]{\overline{x_{#1}}}
\newcommand{\apj}[3]{\alpha^{#1}_{{#2},{#3}}}
\newcommand{\Qpm}[2]{Q^{#1}_{{#2}}}
\newcommand{\modelEXd}[2]{\dpart{t}{}{\psij{#1}\vard{#1}}+\ang_{#2}\cdot\nabla_{\bx}\psij{#1}\vard{#1} = \sigs(\barf{\psi}(\bx,t) - \psij{#1}\vard{#1})-\siga\psij{#1}\vard{#1}} 
\newcommand{\modelEXx}[2]{v_{#2} \dpart{x}{}{\psij{#1}\varx{#1}} = \sigs(\barf{\psi}(\bx,t) - \psij{#1}\varx{#1})-\siga\psij{#1}\varx{#1}}
\newcommand{\modelSteadyOne}[2]{ \angx{#2} \dpart{x}{}{\psij{#1}\varx{#1}} + \sigt\psij{#1}\varx{#1}= \sigs\barf{\psi}(x,t) }
\newcommand{\quadf}[2]{\sum_{j=1}^{#1}\, \omj {#2}}
\newcommand\by{{\mathbf y}}
\newcommand\Kernel[3]{{K^{#1}_{#2}\left(#3\right)}}
\newcommand\knots[1]{{\text{\tiny{\textbf T}}_#1}}
\newcommand\bspline[2]{{\boldsymbol B_{#1}(#2)}}
\newcommand{\dt}{\Delta t}
\title{Superconvergence Extraction of Upwind Discontinuous Galerkin Method Solving the Radiative Transfer Equation}
\author[1]{Andr\'es Galindo-Olarte}
\author[2]{Zhichao Peng\footnote{Corresponding author: pengzhic@ust.hk}}
\author[3]{Jennifer K. Ryan}
\affil[1]{Oden Institute for Computational Engineering and Sciences, The University of Texas at Austin, Austin, TX, USA}
\affil[2]{Department of Mathematics, The Hong Kong University of Science and Technology, Clear Water Bay, Kowloon, Hong
Kong, China}
\affil[3]{Department of Mathematics, KTH Royal Institute of Technology, Stockholm, Sweden}
\begin{document}

\maketitle
\begin{abstract}
We theoretically analyze the superconvergence of the upwind discontinuous Galerkin (DG) method for both the steady-state and time-dependent radiative transfer equation (RTE), and apply the Smooth-Increasing Accuracy-Conserving (SIAC) filters to enhance the accuracy order. Direct application of SIAC filters on low-dimensional macroscopic moments, often the quantities of practical interest, can effectively improve the approximation accuracy with marginal computational overhead.

Using piecewise $k$-th order polynomials for the approximation and assuming constant cross sections, we prove $(2k+2)$-th order superconvergence for the steady-state problem at Radau points on each element and $(2k+\frac{1}{2})$-th order superconvergence for the global $L^2-$ and negative-order Sobolev norms for the time-dependent problem. 

Numerical experiments confirm the efficacy of the filtering, demonstrating post-filter convergence orders of $2k+2$ for steady-state and $2k+1$ for time-dependent problems. More significantly, the SIAC filter delivers substantial gains in computational efficiency. For a time-dependent problem, we observed an approximately $2.22\times$ accuracy improvement and a $19.94\times$ reduction in computational time. For the steady-state problems, the filter achieved a $4–9\times$ acceleration without any loss of accuracy.

\end{abstract}

\section{Introduction}

The radiative transfer equation (RTE) (\cite{pomraning2005equations,lewis1984computational,case1967linear}) describes particles, e.g. neutrons or photons,  propagating through and interacting with a background medium by means of a kinetic distribution that gives the density of particles with respect to the local phase space measure . It has a wide range of applications in nuclear engineering, medical imaging, astrophysics and remote sensing. Efficient numerical simulation tools for RTE are highly desired. In this article, we take advantage of the underlying superconvergent properties of the approximation to improve accuracy as well as computational performance.

In order to approximate solutions to RTE, the upwind discontinuous Galerkin (DG) method is used.  DG has been a popular and powerful deterministic solver for RTE since its first introduced by Reed and Hill in simulating neutron transport \cite{reed1973triangular}. A key strength of the high-order upwind DG method is its asymptotic preserving (AP) property, which ensures it correctly captures the diffusion limit of the RTE without resolving the small particle mean free path. This property was formally established through asymptotic analysis \cite{larsen1989asymptotic,adams2001discontinuous}, and later rigorously proved \cite{guermond2010asymptotic}. Uniform convergence with respect to the mean free path has been analyzed in \cite{sheng2021uniform,sheng2025numerical}. Besides upwind DG, DG methods have been actively developed for solving RTE. These developments include \emph{hp}-adaptive hybridized DG approach \cite{du2023fast},  approaches leveraging micro-macro decomposition and implicit-explicit (IMEX) time integrator   \cite{jang2014analysis,peng2020stability,peng2021asymptotic,peng2021stability,xiong2022high}, semi-Lagrangian approach \cite{cai2024asymptotic}, and  high-order low-order method (HOLO) \cite{feng2025decomposed}. 

Despite its success, the discontinuous Galerkin (DG) method for the radiative transfer equation (RTE) suffers from the curse of dimensionality, as the unknown distribution function is posed in a high-dimensional phase space. An attractive mitigation is post-processing techniques  that can significantly enhance resolution of numerical solutions. Exploiting pointwise or Fourier space superconvergence, post-processing techniques typically enhance accuracy of a finite element solution through a convolution that performs effective local averaging to damp dominating high frequency errors. The development of post-processing dates back to Bramble and Schatz \cite{bramble1977higher} in the context of continuous finite elements methods for elliptic problems, which exploits superconvergence in negative-order norms. Formally defined as weighted dual norms, negative-order norms can be seen as a bridge connecting information in the Fourier/signal space with the one in physical space.  For time-dependent problems, the DG method is proven to exhibit higher-order accuracy in negative-order norms— $2\dgorder$ for dispersion errors and $2\dgdeg+2$ for dissipative errors—compared to only $\dgdeg+1/2$ in the $L^2$-norm. The pioneering work of Cockburn, Luskin, Shu, and Süli \cite{cockburn2003enhanced} established the framework for superconvergence analysis in negative norms and post-processing for time-dependent linear hyperbolic equations. This relies on a properly designed kernel that respects moment \cite{mock1978}. This technique was later extended to nonlinear scalar conservation laws \cite{ji2013negative,ji2014superconvergent,meng2017discontinuous}, nonlinear symmetric systems \cite{meng2018divided}, and, more recently, to the collisionless Vlasov–Maxwell system \cite{galindo2023superconvergence}. Besides exploiting negative-order norm, super-convergence for time-dependent problems can also be proved through a direct Fourier analysis \cite{guo2013superconvergence}. Beyond signal space, superconvergence of DG methods for steady-state linear and nonlinear hyperbolic problems is analyzed through polynomial expansions of error equations \cite{adjerid2002posteriori,ADJERID20063331}.

In this paper, we exploit post-processing based on Smooth-Increasing Accuracy-Conserving (SIAC) filters \cite{steffan2008investigation,mirzaee2011smoothness,docampo2023magic} to enhance the accuracy of both steady state and time-dependent RTE, which is a collisional kinetic equation. In practice, the desired quantity of interest is often low-dimensional macroscopic moments (e.g., the density) rather than the high-dimensional particle distributions.  We theoretically establish the superconvergence results for both steady state and time-dependent RTE and numerically demonstrate that directly applying SIAC filters to these macroscopic moments significantly enhances their approximation accuracy.  Since the effectiveness of SIAC filter relies on the superconvergence of the DG scheme, we also establish superconvergence results for both steady state and time-dependent RTE. The main contributions of this paper are as follows.
\begin{enumerate}
    \item For the  steady-state problem, we prove $\dgdeg+2$ convergence order at Radau points within each element and $2\dgdeg+2$-th order at the outflow edge. 
    This result is established by extend the framework based on coefficient matching for a polynomial expansion of the error equation \cite{adjerid2002posteriori,ADJERID20063331} which proves $2\dgdeg+2$-th order convergence at outflow flux and $\dgdeg+2$-th order at interior Radau points for linear advection and nonlinear scalar hyperbolic problems. Instead of using a key argument inapplicable to RTE based on the theory for the Runge-Kutta method, we have a novel presentation of the analysis and show that the local polynomial is the same on each element and only weighted by the error coefficients.

    \item For the time-dependent problem with constant cross-section, we establish, for the first time, an $L^2-$ error estimate for the DG-DO method. By carefully designing a dual problem, we prove a $2\dgdeg+\tfrac{1}{2}$ superconvergence rate in a negative Sobolev norm and observe a $2\dgorder$ convergence rate numerically. This analysis, which addresses discretization in both space and angle for time-dependent problems, is new. Previous works focused either on steady-state problems \cite{han2010discrete,sheng2021spherical} or assumed continuity in physical space \cite{frank2016convergence,galindo2025numerical,chen2019multiscale}.

    \item Through a series of benchmark tests, we validate our theoretical results and showcase the SIAC filter's capacity to drastically improve computational efficiency. By enhancing the accuracy achievable on coarse meshes, the filter reduces the time required to reach a target error tolerance in comparison to mesh refinement. Specifically, we observe an approximately $3.26\times$ accuracy boost along with a $19.65\times$ computational time reduction for a time-dependent problem, and $4\times$ to $9\times$  speedup  \emph{without sacrificing accuracy} for a steady state problem.
\end{enumerate}

This paper is organized as follows. We introduce the model problem, notation, preliminaries and the discretization in Sec. \ref{sec:background}, our superconvergence results are presented in Sec. \ref{sec:superconvergence-filter}, sketch the key steps for our proofs in Sec. \ref{sec:proof_outline}.  The SIAC filter is introduced in Sec. \ref{sec:SIAC} and the numerical tests of our methods are presented in Sec. \ref{sec:num}.  Our conclusions are discussed in Sec. \ref{sec:con}. 







\section{Background\label{sec:background}}

We consider one-group, linear radiative transfer equation with isotropic scattering:
\begin{subequations}
\label{eq:rte_md}
\begin{align}
&\modelEXd{}{}, \label{eqn:hd_rt_eqn}\\
&\barp(\bx,t)=\Oavg{\psij{}\vard{}} = \frac{1}{4\pi}\int_{\mathbb{S}^2}\, \psij{}\vard{} \, d\ang,  \label{eqn:sphere_ave}
\end{align}
\end{subequations}
equipped with proper boundary conditions. Here, $\psij{}\vard{}$ is the particle distribution function (also known as intensity or angular flux) for spatial location $\bx\in\BX\subset\R^3$, angular direction on the unit sphere $\ang\in\mathbb{S}^2$ and time $t$. The macroscopic density (also known as scalar flux), as $\barp(\bx,t)=\Oavg{\psij{}\vard{}}=\frac{1}{4\pi}\int_{\mathbb{S}^2}\, \psij{}\vard{}\, d\ang$ is the average of the distribution function with respect to angular direction, and $\Oavg{\cdot}$ represents the averaging operator over angular space. Here, $\sigs(\bx)\geq 0$ is an isotropic scattering cross section, while $\siga(\bx)\geq0$ is the absorption cross section.  The total cross section is defined as $\sigt(\bx)=\sigs(\bx)+\siga(\bx)$. In this work we will assume the cross-sections $\sigt$ and $\siga$ are constant in physical space $\bx$.

In our analysis for the steady-state problem, we focus on the one-dimensional slab geometry. For a one-dimensional slab geometry, the particle distribution $\psij{}\vard{}$ only depends on a location $x\in\mathbb{R}$ and the cosine of angle between the angular direction and the $x$-axis $\ang \in [-1,1]$. With this assumption, the equation can be simplified as $v=\cos(\Omega)$:
\begin{subequations}
\label{eq:1d_slab}
\begin{align}
& \modelEXx{}{}, \\ 
& \barp(x,t)=\frac{1}{2}\int_{-1}^{1}\, \psij{}(x,v)\,dv.
\end{align}
\end{subequations}

On the other hand, following \cite{galindo2025numerical}, to help the error analysis in the time, since the cross-sections $\siga$ and $\sigt$ are constant in space, we can reduce the analysis to Equation \eqref{eqn:hd_rt_eqn} to a purely scattering system for the function  (abusing notation) 
\begin{equation*}
\text{``}\psi := e^{\siga t}\psi\text{''}.
\end{equation*}
$\psi$ satisfies
\begin{equation} 
\begin{split}
    &\partial_t\psij{}\vard{}+\ang\cdot\nabla_{\bx}\psij{}\vard{} =\sigma(\barf{\psij{}}(\bx,t)-\psij{}\vard{}),\\
    &\barf{\psij{}}(\bx,t)  =\frac{1}{4\pi}\int_{\Sang}\psij{}\vard{}\,d\ang
\end{split}
\label{eq:hd_rt_reduced}
\end{equation}
where $\sigma:=\sigs$. Henceforth, we restrict our attention to \eqref{eq:hd_rt_reduced}. The corresponding results for the case of nonzero absorption can then be recovered by reversing the transformation, which introduces exponential time decay when $\siga > 0$.

In order to define the numerical discretization and prove superconvergence, we first note a few preliminaries.

\subsection{Notation and preliminaries}

In this subsection, we discuss notation and estimates that will be used throughout this article.

\subsubsection{Notation}

\textbf{Spaces and Norms.} In this article, we denote the domain for angular space as $\Sang = \mathbb{S}^{d-1}$ and for Euclidean space as $\BX\subset \R^d.$ $\BX$ refers to a general domain -- either in Euclidean or angular space. Hence, when $\BX\subset\R^3,\, \Sang = \Sang.$

We start by defining the multi-index $\alpha$, where
$\alpha = (\alpha_1,\dots,\alpha_d),\quad  |\alpha| = \sum_{j=1}^{d}\, \alpha_j  $
and 
\[
D^{|\alpha|}f(\bx) = \dpart{x_1}{\alpha_1}{}\dpart{x_2}{\alpha_2}{}\cdots\dpart{x_d}{\alpha_d}{f(\bx)},
\]
where $f$ is some given function.  

The usual $L^p-$norm is defined as
\[
\Lp{f}{\BX}{p} = \left(\int_{\BX}\, |f|^p\, d\bx\right)^{1/p},
\]
where $\Lp{f}{\D}{\infty} = \text{ess sup}_\D\, |f|$.

The general Sobolov space for $k \in \mathbb{N}_+,\, 1\leq p \leq \infty$ is defined as
\begin{equation}
    W^{k,p}(\BX) :=\{v\in L^p(\BX):\, D^{|\alpha|}f \in L^p(\BX),\, |\alpha|\leq k\},
\end{equation}
with norm
\begin{equation}
    \norma{f}_{ W^{k,p}(\D)} = \sum_{|\alpha|\leq k}\, \Lp{f}{\D}{\Kindx}.
\end{equation}
We note that when $p=2,$ we denote $H^k = W^{k,2}$ we simplify notation as
\[
\Hs{f}{\BX}{k} = \norma{f}_{H^{k}(\BX)}
\]
and define the $L^{2}-$inner product as
\[
(f,g)_{\BX} = \int_{\BX}\, fg\, d\bx.
\]

Further, for purposes of error analysis contained in this article, we define a weighted negative-order Sobolev norm for $F=(f^1,\cdots,f^N)$ as
\begin{equation}
    \norma{F}_{-\ell,\BX}=\sup_{\Phi\in (C_0^{\infty}(\BX))^N} \left[\sum_{j=1}^N\omega_j(f^j,\phi^j)_{\BX}\right]/\left(\sum_{j=1}^N\omega_j\norma{\phi^j}_{\ell,\BX}^2\right)^{1/2}.
    \label{eqn:negative-order-norm}
\end{equation}

The weights $\omega_j$ correspond to the quadrature weight ones in the $S_N$ discretization for the phase-angle space; a more detailed discussion will be provided below.

\textbf{Dual problem.}  To obtain superconvergence for the time-dependent equation, we will need to bound the negative-order Sobolev norm of the divided differences of the error.  This requires defining the dual problem.
The dual problem for $j=1,\dots,\Nang$ is defined as
\begin{equation}
\begin{split}
&\partial_t\dualj{j}+\angj{j}\cdot\nabla_x\dualj{j}+\sigma(\bardual-\dualj{j})=0,\\
&\overline{\varphi}=\frac{1}{m(\Sang)}\sum_{j=1}^{N_{\Nang}}\omega_j\varphi^j.
\end{split}
\label{eq:dual_problem_angle_discretized}
\end{equation}
together with the final time condition $\dualj{j}(x,T)=\dualj{j}(x)\in C^{\infty}(X)$.

Note that for this dual problem satisfies
\begin{equation}
    \frac{d}{dt}\sum_{j=1}^{\Nang}\, \omj (\psij{j},\dualj{j})_{\BX}=0.
    \label{eqn:duality}
\end{equation}
This is an important property that allows us to phrase our error estimates in terms of the initial projection as well as the temporal growth in jumps across element interfaces.

\textbf{Approximation polynomials.} Throughout this article, we will utilize the \emph{Legendre polynomials} defined on a reference interval $[-1,1]$, which are defined by the recurrence relation
\begin{equation}
    \label{eq:legendre}
    \leg{0}(\xi) = 1,\quad \leg{1}(\xi) = \xi,\quad \leg{\dgorder}(\xi) = \frac{2\dgorder}{\dgorder}\leg{\dgdeg}(\xi)-\frac{\dgdeg}{\dgorder}\leg{\dgdeg-1}(\xi),\, \dgdeg \geq 1.
\end{equation}
These satisfy  $\int_{-1}^{1}\, \leg{\dgdeg}(\xi)\leg{j}(\xi)d\xi = \frac{2}{2\dgorder}\delta_{jk}$ with $\delta_{j\dgdeg}$ being the Kronecker delta 
\[\delta_{j\dgdeg}=\begin{cases}
1, \quad j=\dgdeg\\
0, \quad j\neq \dgdeg
\end{cases}.\]

The \emph{Radau polynomials} will also be useful.  The $\dgdeg$-th order left Radau polynomial, $\radp{\dgdeg},$ and the right Radau polynomial, $\radm{\dgdeg},$ on the reference element $[-1,1]$ are defined as
\begin{equation}
\label{eq:radau}
    \radpm{\dgdeg}(\xi) = \begin{cases}
            \leg{\dgdeg}(\xi)\pm \leg{\dgdeg-1}(\xi),\quad &\text{if}\;\dgdeg\geq 1,\\
            0, \quad &\text{if}\; k= 0.
            \end{cases}
\end{equation}
Note that $\radp{\dgdeg}(-1)=0$ and $\radm{\dgdeg}(1)=0$. In other words, the left (right) end point of the reference interval is a root of the left (right) Radau polynomial.

\textbf{Quadrature.} For the discretization in phase space-angle, we will need to define a quadrature that can integrate exactly polynomials of total degree less than or equal to $N$ ($|\alpha| = N$) \cite{freeden1998constructive} and denote it by:
\[
\barfh{f} = \quadf{N}{f_j},
\]
with $\omj$ denoting the quadrature weights and $f_j$ denoting the function $f$ evaluated at the quadrature points.  The weights are defined such that $\sum_{j=1}^N\, \omj = 1$.

For the quadrature, we have the following approximation result:
\begin{lemma}[Quadrature accuracy bound (\cite{hesse2006cubature}, Corollary)]
\label{lem:sn_approx}
Let $f\in H^s(\Sang),\, s>1$ (cf. \cite{freeden1998constructive})). Then 
\begin{equation}
    \left|\frac{1}{m(\Sang)}\int_{\Sang}f(\ang)\,d\ang - \quadf{N}{f(\angj{j})}\right|\leq C_s {N}^{-s}\Hs{f}{\Sang}{s}.
\end{equation}
$C_s$ is an universal constant depending only on $s$ and $m(\Sang)$ represents the Lebesgue measure $\Sang$. 
\end{lemma}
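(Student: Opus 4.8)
The plan is to recognize that, because $s>1=\tfrac{d-1}{2}$ for $\Sang=\mathbb{S}^2$, the Sobolev space $H^s(\Sang)$ is a reproducing kernel Hilbert space: point evaluation $f\mapsto f(\angj{j})$ is a bounded functional, so the cubature error functional $E_N(f):=\frac{1}{m(\Sang)}\int_{\Sang}f\,d\ang-\quadf{N}{f(\angj{j})}$ is bounded on $H^s(\Sang)$. The asserted inequality is precisely the statement that the operator norm of $E_N$ over $H^s(\Sang)$ is at most $C_sN^{-s}$, so by homogeneity it suffices to estimate this worst-case error. The hypothesis $s>1$ is not incidental: it is exactly the threshold at which the angular nodes admit bounded evaluation, which is what lets a point-based quadrature be compared with an integral at all.

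First I would expand in spherical harmonics and invoke the reproducing kernel $K_s(\ang,\ang')=\sum_{\ell\ge 0}a_\ell\,\frac{2\ell+1}{4\pi}\,\leg{\ell}(\ang\cdot\ang')$, where $a_\ell=(1+\ell(\ell+1))^{-s}$ are the inverse Sobolev weights (so $a_\ell\sim\ell^{-2s}$) and $\leg{\ell}$ is the degree-$\ell$ Legendre polynomial from the addition theorem; the RKHS condition $\sum_\ell(2\ell+1)a_\ell<\infty$ holds precisely because $s>1$. The representer identity then gives $\|E_N\|_{(H^s)^\ast}^2=(E_N\otimes E_N)K_s$, i.e. $E_N$ applied in each argument. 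Using that $\int_{\Sang}\leg{\ell}(\ang\cdot\ang')\,d\ang=0$ for $\ell\ge1$, that the rule is exact for polynomials of degree $\le N$, and that $\sum_j\omj=1$ annihilates the $\ell=0$ term, every contribution with $\ell\le N$ cancels, leaving the nonnegative tail
\[
\|E_N\|_{(H^s)^\ast}^2=\sum_{\ell>N}a_\ell\,\frac{2\ell+1}{4\pi}\sum_{j,j'}\omj\,\omega_{j'}\,\leg{\ell}(\angj{j}\cdot\angj{j'}).
\]

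The main obstacle is the sharp estimate of this tail. The crude bounds $|\leg{\ell}|\le1$ and $\sum_j\omj=1$ give only $\sum_{\ell>N}a_\ell(2\ell+1)\sim\sum_{\ell>N}\ell^{1-2s}\sim N^{2-2s}$, hence $\|E_N\|\le CN^{1-s}$, which is one power short of the claimed rate. Recovering the sharp $N^{-s}$ requires exploiting the positivity of the weights together with the quasi-uniform distribution of the $S_N$ nodes and the (even) degree of exactness, so that the localized Legendre-kernel double sum $\sum_{j,j'}\omj\,\omega_{j'}\,\leg{\ell}(\angj{j}\cdot\angj{j'})$ is controlled well beyond the trivial bound on the critical band $\ell>N$; this localization estimate for the Legendre kernel is the technical heart of the argument and the content of the cited result of Hesse. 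Once the tail is shown to be of order $N^{-2s}$, taking square roots and invoking homogeneity of the operator norm yields $|E_N(f)|\le C_s N^{-s}\Hs{f}{\Sang}{s}$. As a consistency check, a softer route — using exactness to replace $f$ by $f-p$ for a near-best degree-$N$ polynomial $p$, then bounding $|E_N(f-p)|\le 2\,\Linf{f-p}{\Sang}$ via the positive weights and a de la Vall\'ee-Poussin/Jackson estimate $\Linf{f-p}{\Sang}\le CN^{1-s}\Hs{f}{\Sang}{s}$ — reproduces the same suboptimal $N^{1-s}$ cleanly, which confirms that the refined kernel estimate is exactly what upgrades the exponent to the stated $N^{-s}$.
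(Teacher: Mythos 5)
First, a point of reference: the paper never proves this lemma at all --- it is imported verbatim by citation, as the lemma header ``(\cite{hesse2006cubature}, Corollary)'' indicates --- so your proposal must stand or fall as a self-contained proof. As such, its framework is correct but it has a genuine gap at precisely the decisive step. The setup is fine: $s>1$ is indeed the reproducing-kernel/embedding threshold for $H^s(\Sang)$ with $\Sang=\mathbb{S}^2$; the identity $\norma{E_N}_{(H^s)^*}^2=(E_N\otimes E_N)K_s$ is the right starting point; and the cancellation of all modes $\ell\le N$ (polynomial exactness for $1\le\ell\le N$, the normalization $\sum_j\omj=1$ for $\ell=0$) correctly reduces everything to the nonnegative tail double sum $\sum_{\ell>N}a_\ell\frac{2\ell+1}{4\pi}\sum_{j,j'}\omj\omega_{j'}\leg{\ell}(\angj{j}\cdot\angj{j'})$. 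But you then concede that the elementary bound yields only $N^{1-s}$ and declare that the upgrade to $N^{-s}$ ``is the technical heart of the argument and the content of the cited result of Hesse.'' Since the statement being proved \emph{is} that cited result, this is circular: the entire nontrivial content of the lemma --- the extra power of $N$ --- is invoked rather than established. What you have written is a correct \emph{reduction} of the lemma to the tail estimate, together with an honest explanation of why the reduction alone is insufficient; it is not a proof.

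The gap is compounded by a misdirection about how it would be filled: quasi-uniformity of the quadrature nodes plays no role. The theorem of Hesse (and Sloan) holds for an \emph{arbitrary} positive-weight rule exact on polynomials of degree $\le N$, with $C_s$ independent of where the nodes sit; no geometric assumption on the $S_N$ points enters. The known route to the sharp rate stays entirely within your two hypotheses --- positivity and exactness --- but applies approximation theory to the \emph{kernel} rather than to $f$: writing $g(t)=\sum_{\ell>N}a_\ell\frac{2\ell+1}{4\pi}\leg{\ell}(t)$, one approximates $g$ uniformly on $[-1,1]$ by a polynomial $p$ of degree $\le N$, uses exactness to evaluate the $p$-part of the double sum (it collapses to the mean of $p$, which is controlled by $\norma{g-p}_{L^\infty}$ since $g$ has zero mean) and positivity with $\sum_j\omj=1$ to bound the $(g-p)$-part, so that the whole double sum is $\lesssim\norma{g-p}_{L^\infty}$; the hard work, and the actual content of \cite{hesse2006cubature}, is the sharp univariate estimate of this best uniform approximation of the tail kernel. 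Until that estimate is carried out (or the lemma is simply accepted as quoted, which is what the paper does), your argument proves only the $N^{1-s}$ rate that your own consistency check recovers.
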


\textbf{Discretization.} Let $\Th$ be a spatial tessellation of the domain $\BX$ and $\elmt{}$ an element in the tessellation.  For simplicity of the analysis, we assume that $\elmt{}$ is a rectangular element, that is $\elmt{}$ is defined as $\elmt{}=I_{K,1}\otimes\dots\otimes I_{\elmt{},d}$, with $I_{\elmt{},j}$ being a one-dimensional interval. For simplicity, we denote $h_{\elmt{}}$ to be the size of $\elmt{}$. We also introduce the edges, $\edg{}$, and $\Eh = \cup\, \edg{}.$ 
For convenience, we introduce the shorthand notation
\[\int_{\BX} \, d\bx = \sum_{\elmt{}\in \Th}\, \int_{\elmt{}}  d\bx,\qquad \int_{\edg{}}  d\Eh =\sum_{e\in \Eh}\int_{\edg{}}  d\edg{}.\]

Given an integer $\dgdeg \geq 1$,  a polynomial approximation space for the tensor product polynomials, $\Qk{\dgdeg} = \Pk{\dgdeg}(I_{\elmt{},1}) \otimes \dots \otimes \Pk{\dgdeg}(I_{\elmt{},d})$, is defined as 
\begin{equation}
    \Vh{\dgdeg }=\{v\in L^2(\BX) : v\bigg|_{\elmt{}}\in \Qk{\dgdeg }(\elmt{}),\,\elmt{}\in \Th\}.
    \label{eqn:finite_element_space}
\end{equation}

On the edge, $\edg{}$, of element $\elmt{}$,  $\normal_x^{\pm}$ denotes the unit outward normal across $\elmt{}$. This allows for defining the jumps and averages on edges.  In order to do so, denote $g^{\pm}=\lim_{x\rightarrow \edg{}^{\pm}}g$. The jumps and averages across $\edg{}$ are then defined as 
\begin{equation}
    \dgjump{g}{\edg{}} = g^+\normal_{\edg{}}^+ - g^-\normal_{\edg{}}^-, \qquad  
    \dgavg{g}{\edg{}}=\frac{1}{2}\left(g^+ + g^-\right),
    \label{eqn:jumpavg}
\end{equation}
respectively.  Note that we will often drop the superscript for the normal vector and utilize $\normal_x^+ = -\normal_x^{-} = -\normal_x.$

\subsubsection{Estimates}

In this subsection we discuss some of the basic estimates that will be useful in the proof of superconvergence.  We note that we do not distinguish between constants and denote them by $C$.

We begin by discussing the following regularity results for a linear PDE system, which will be useful for the proof in the time-dependent case. 
\begin{lemma}[Regularity of the dual problem]\label{lem:regularity} 
Consider the following system of equations with periodic boundary conditions in $\bx$ for all $t\in[0,T]$:
\begin{equation}
\dpart{t}{}{\dualj{j}}+\mathbf{A}_j(\bx)\cdot\nabla_{\bx}\dualj{j}+\sigma\left[\bardual - \dualj{j}\right]=0,\, 1\leq j\leq \Nang.
        \label{eqn:general_dual_problem}
\end{equation}
where the given functions $\mathbf{A}_j\in W^{l+1,\infty}$ are periodic, and satisfy the divergence free constraint $\nabla_x \cdot\mathbf{A}_j(x)=0$. Then for all $t\in [0,T]$ there exists a constant independent of time $t$ satisfying:
\begin{equation}
\quadf{N_v}{\Hs{\dualj{j}(\cdot,t)}{\BX}{\ell}^2}\leq C\quadf{N_v}{\Hs{\dualj{j}(\cdot,T)}{\BX}{\ell}^2}.
\end{equation}
\end{lemma}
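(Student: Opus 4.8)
The plan is to run a weighted Sobolev energy estimate on the system \eqref{eqn:general_dual_problem} and close it with a Grönwall inequality applied backward in time (the system carries a final-time condition). Up to the standard equivalence $\Hs{f}{\BX}{\ell}^2\simeq\sum_{|\alpha|\le\ell}\Lp{D^{|\alpha|}f}{\BX}{2}^2$, I would work with the energy
\[
\mathcal E_\ell(t):=\quadf{N_v}{\Hs{\dualj{j}(\cdot,t)}{\BX}{\ell}^2}=\quadf{N_v}{\sum_{|\alpha|\le\ell}\Lp{D^{|\alpha|}\dualj{j}(\cdot,t)}{\BX}{2}^2},
\]
which is precisely the quantity appearing in the statement. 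For each multi-index $\alpha$ with $|\alpha|\le\ell$ I apply $D^{|\alpha|}$ to \eqref{eqn:general_dual_problem}, test against $\omj D^{|\alpha|}\dualj{j}$, integrate over $\BX$, and sum over $j$ and over $|\alpha|\le\ell$. The time term collapses to $\tfrac12\frac{d}{dt}\mathcal E_\ell(t)$, so the whole problem reduces to controlling the contributions of the transport operator $\mathbf{A}_j\cdot\nabla_{\bx}$ and of the scattering operator $\sigma[\bardual-\dualj{j}]$.

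The decisive structural observation is that the scattering contribution is dissipative. Since $\bardual$ is the weighted angular average of the $\dualj{j}$ with weights $\omj$ summing to one, $D^{|\alpha|}$ commutes with the averaging, $\sum_j\omj D^{|\alpha|}\dualj{j}=D^{|\alpha|}\bardual$, and after pairing the scattering term equals
\[
\sigma\sum_{|\alpha|\le\ell}\Big[\Lp{D^{|\alpha|}\bardual}{\BX}{2}^2-\quadf{N_v}{\Lp{D^{|\alpha|}\dualj{j}}{\BX}{2}^2}\Big].
\]
A pointwise Jensen (Cauchy–Schwarz) inequality $|D^{|\alpha|}\bardual|^2=\big|\sum_j\omj D^{|\alpha|}\dualj{j}\big|^2\le\sum_j\omj|D^{|\alpha|}\dualj{j}|^2$ forces each bracket to be non-positive, so the scattering term is $\le 0$; it is also trivially bounded below by $-\sigma\mathcal E_\ell(t)$, which is all that is needed for an upper Grönwall bound.

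For the transport term I split $D^{|\alpha|}(\mathbf{A}_j\cdot\nabla_{\bx}\dualj{j})$ by the Leibniz rule into the principal piece $\mathbf{A}_j\cdot\nabla_{\bx}D^{|\alpha|}\dualj{j}$ plus commutator pieces in which at least one derivative lands on $\mathbf{A}_j$. Testing the principal piece against $D^{|\alpha|}\dualj{j}$ and integrating by parts gives $-\tfrac12\int_{\BX}(\nabla_{\bx}\cdot\mathbf{A}_j)(D^{|\alpha|}\dualj{j})^2$ plus a boundary term, both of which vanish by the divergence-free hypothesis $\nabla_{\bx}\cdot\mathbf{A}_j=0$ and the periodic boundary conditions. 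Each commutator piece carries a factor $D^{|\beta|}\mathbf{A}_j$ with $1\le|\beta|\le\ell$, bounded in $L^\infty$ by $\|\mathbf{A}_j\|_{W^{\ell+1,\infty}}$, multiplying a derivative of $\dualj{j}$ of order $|\alpha|-|\beta|+1\le\ell$; by Cauchy–Schwarz the entire transport contribution is therefore bounded by $C\max_j\|\mathbf{A}_j\|_{W^{\ell+1,\infty}}\,\mathcal E_\ell(t)$ with $C=C(\ell,d)$.

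Combining the three estimates yields the differential inequality $\big|\tfrac{d}{dt}\mathcal E_\ell(t)\big|\le C\,\mathcal E_\ell(t)$ with $C$ depending only on $\sigma$, $\ell$, $d$, and $\max_j\|\mathbf{A}_j\|_{W^{\ell+1,\infty}}$. Because of the final-time condition I change variables to $s=T-t$ and apply Grönwall to $s\mapsto\mathcal E_\ell(T-s)$, obtaining $\mathcal E_\ell(t)\le e^{C(T-t)}\mathcal E_\ell(T)\le e^{CT}\mathcal E_\ell(T)$, which is the claimed bound with a constant uniform over $t\in[0,T]$. I expect the scattering-sign step to be conceptually central but short; the main technical obstacle is the transport commutator bookkeeping — checking that every derivative hitting $\dualj{j}$ in a commutator stays at order $\le\ell$ so that it is absorbed into $\mathcal E_\ell$, and that the merely $W^{\ell+1,\infty}$ coefficients $\mathbf{A}_j$ generate no uncontrolled top-order terms, which is exactly where the divergence-free and regularity hypotheses enter.
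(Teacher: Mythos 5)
Your proposal is correct and takes essentially the same route as the paper's proof: a weighted energy estimate in which the transport pairing vanishes by the divergence-free and periodicity hypotheses, the scattering pairing has the favorable sign by Jensen's inequality (the paper uses the equivalent completing-the-square identity $-\sum_j \omj \int_{\BX} (\bardual-\dualj{j})\dualj{j}\,d\bx = \sum_j \omj \int_{\BX}(\bardual-\dualj{j})^2\,d\bx$), and the higher-order norms are obtained by differentiating the equation and repeating the argument, closed by a backward-in-time bound. If anything, your version is more complete than the paper's: the paper's final step (``apply $\partial_{x_j}$ to the equation and repeat'') silently drops the commutator terms $[D^{|\alpha|},\mathbf{A}_j\cdot\nabla_{\bx}]\dualj{j}$ that arise for variable coefficients --- harmless only when $\mathbf{A}_j$ is constant, as in the paper's application where $\mathbf{A}_j=\angj{j}$ --- whereas your Leibniz bookkeeping plus backward Gr\"onwall covers the variable-coefficient case actually allowed by the lemma's hypotheses, at the cost of the constant $e^{CT}$ in place of the pure monotonicity ($C=1$) the paper's argument yields.
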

\begin{proof}
For the time dependent case we will assume $\sigma$ is constant.
    By testing the equation above by $\dualj{j}$ for each $j$, and using the periodic and divergence free properties of the $\mathbf{A_j}$'s, we have the following,
    \begin{equation}
    \begin{split}
        \frac{1}{2}\frac{d}{dt}\quadf{N_v}{\Hs{\dualj{j}(\cdot,t)}{\BX}{\ell}^2} = & -\sigma\quadf{N_v}{\int_{\BX}\, (\bardual-\dualj{}_j)\dualj{}_j\,d\bx} \\
        & = -\sigma\quadf{N_v}{\int_{\BX}(\bardual-\dualj{}_j)^2\,d\bx}\geq 0.
    \end{split}
    \end{equation}
    The conclusion follows. Now since we are considering the full Sobolev norm, we still need to estimate the $L^2$- weighted norms of the higher order spatial derivatives $\partial_{x_j}^\ell$,  to do so we apply $\partial_{x_j}$ to the equation above  and then we repeat the steps above.
\end{proof}

For the discretization, the following approximation properties for $\Vh{k}$, as well as some inverse inequalities \cite{ciarlet2002finite} will be necessary. Here, we denote by $\Pi^\dgdeg$ the usual $L^2-$projection onto $\Vh{\dgdeg}$ and define $\eta_h^{j}=\Pi^\dgdeg\psij{j}-\psij{j}$ as the \emph{projection error} as $\xi^j_h=\Pi^k\psij{j}-\psij{j}_h$ as the \emph{error in space}.  Then, $e^j_h=\psij{j}-\psij{j}_h=\xi^j_h-\eta^j_h$ and we have the following estimate for the projection error:
\begin{lemma}[Approximation properties\cite{ciarlet2002finite}]     \label{lem:approximation_properties}
There exists a constant $C>0$, such that for any $g\in H^{\dgorder}(\D)$,
\begin{equation}
\norma{\eta_h}_{L^2(\elmt{})}+h_{\elmt{}}\norma{\nabla_x\eta_h}_{L^2(\elmt{})}+h_{\elmt{}}^{1/2}\norma{\eta_h}_{L^2(\dK)} \leq Ch^{\dgorder}_{{\elmt{}}}\norma{\psij{}}_{\dgorder,{\elmt{}}},\qquad \forall {\elmt{}}\in\Th,\\
\label{eqn:projection_estimates}
\end{equation}
where the constant $C$ is independent of the mesh size $h_{{\elmt{}}}$, but depends on the polynomial degree, ${\elmt{}}$, and the regularity of $\psij{}$.
\end{lemma}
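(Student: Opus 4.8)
The final statement is Lemma \ref{lem:approximation_properties}, the standard approximation (interpolation) estimate for the $L^2$-projection onto the tensor-product DG space $\Vh{\dgdeg}$. The plan is to prove it by reducing to the reference element via affine scaling and invoking the classical Bramble--Hilbert lemma together with a trace inequality, exactly as in \cite{ciarlet2002finite}. First I would work element by element: since the norms on the left-hand side are sums of local contributions over $\elmt{}\in\Th$ and the claimed bound is stated per element, it suffices to establish the three local estimates on a fixed rectangular $\elmt{}=I_{\elmt{},1}\otimes\cdots\otimes I_{\elmt{},d}$. The $L^2$-projection commutes with affine maps in the sense that $\Pi^\dgdeg$ pulls back to the reference projection $\widehat{\Pi}^\dgdeg$ onto $\Qk{\dgdeg}$ on the reference cube $\widehat{\elmt{}}=[-1,1]^d$, so the entire argument can be carried out on $\widehat{\elmt{}}$ and then scaled.

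The key steps, in order, are as follows. (i) Map $\elmt{}$ to the reference element $\widehat{\elmt{}}$ by the affine diffeomorphism $F_{\elmt{}}$ with $\mathrm{diam}(\elmt{})\sim h_{\elmt{}}$, and record the standard scaling relations for the $H^m$-seminorms under pullback, namely $|v|_{H^m(\elmt{})}\sim h_{\elmt{}}^{\,d/2-m}|\widehat{v}|_{H^m(\widehat{\elmt{}})}$ for shape-regular rectangles. (ii) On the reference element, observe that $\widehat{\Pi}^\dgdeg$ reproduces every polynomial in $\Qk{\dgdeg}$, hence in particular every polynomial of total degree $\le\dgdeg$; since $\Qk{\dgdeg}\supset\Pk{\dgdeg}$, the operator $I-\widehat{\Pi}^\dgdeg$ annihilates $\Pk{\dgdeg}(\widehat{\elmt{}})$. (iii) Apply the Bramble--Hilbert lemma to the bounded linear operator $I-\widehat{\Pi}^\dgdeg:H^{\dgorder}(\widehat{\elmt{}})\to H^1(\widehat{\elmt{}})$, which vanishes on $\Pk{\dgdeg}$, to obtain $\|\widehat{\eta}\|_{H^1(\widehat{\elmt{}})}\le C\,|\widehat{\psi}|_{H^{\dgorder}(\widehat{\elmt{}})}$, giving control of both the $L^2$ term and the gradient term. (iv) For the boundary term, combine the scaled trace inequality $\|\widehat{\eta}\|_{L^2(\partial\widehat{\elmt{}})}\le C\|\widehat{\eta}\|_{H^1(\widehat{\elmt{}})}$ with step (iii). (v) Finally, undo the scaling: transporting the three reference estimates back to $\elmt{}$ via step (i) produces the $h_{\elmt{}}^{\dgorder}$, $h_{\elmt{}}^{\dgorder-1}$ (i.e.\ $h_{\elmt{}}\cdot\nabla$), and $h_{\elmt{}}^{\dgorder-1/2}$ (i.e.\ $h_{\elmt{}}^{1/2}$ on $\partial\elmt{}$) powers exactly as written in \eqref{eqn:projection_estimates}, where the $d/2$ factors from the measure of $\elmt{}$ cancel consistently on both sides.

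The main obstacle, and the only place requiring genuine care rather than bookkeeping, is the application of the Bramble--Hilbert lemma in step (iii): one must verify that $I-\widehat{\Pi}^\dgdeg$ is indeed a bounded operator from $H^{\dgorder}(\widehat{\elmt{}})$ into $H^1(\widehat{\elmt{}})$ and that it annihilates the full polynomial space used in the lemma. The stability of $\widehat{\Pi}^\dgdeg$ in $H^1$ is not immediate from its definition as an $L^2$-projection, but on the fixed reference element it follows from the equivalence of norms on the finite-dimensional space $\Qk{\dgdeg}$ together with the $L^2$-stability $\|\widehat{\Pi}^\dgdeg\widehat{v}\|_{L^2}\le\|\widehat{v}\|_{L^2}$; an inverse inequality on $\Qk{\dgdeg}$ then upgrades this to $H^1$-boundedness with a constant depending only on $\dgdeg$ and $d$. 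Once this boundedness is secured, the polynomial-reproduction property from step (ii) makes Bramble--Hilbert directly applicable, and the remaining steps are routine scaling and trace arguments whose constants are absorbed into $C$. Since this is a textbook result quoted from \cite{ciarlet2002finite}, I would keep the write-up brief, citing the scaling and Bramble--Hilbert machinery and emphasizing only that the tensor-product structure of $\Qk{\dgdeg}$ preserves the reference-element reproduction of $\Pk{\dgdeg}$.
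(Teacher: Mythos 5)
Your proposal is correct, and it is essentially the same argument the paper relies on: the paper states this lemma without proof, citing \cite{ciarlet2002finite}, and your write-up (affine scaling to the reference element, Bramble--Hilbert applied to $I-\widehat{\Pi}^{\dgdeg}$ with $H^1$-stability secured by norm equivalence on the finite-dimensional space $\Qk{\dgdeg}$, the reference trace inequality, and scaling back) is precisely the standard textbook proof being referenced. The scaling exponents you obtain ($h_{\elmt{}}^{\dgorder}$ for the $L^2$ term, $h_{\elmt{}}^{\dgdeg}$ for the gradient term, and $h_{\elmt{}}^{\dgdeg+1/2}$ for the boundary term) match \eqref{eqn:projection_estimates} exactly, so there is nothing to correct.
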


We will also make use of  the following inverse inequality:
\begin{lemma}[Inverse inequality \cite{ciarlet2002finite}]   \label{lem:inverse_inequality} There exists a constant $C>0$, such that for any $\psij{}\in \Pk{\dgdeg}({\elmt{}})$ the following holds:
\begin{equation}
    \norma{\nabla_x g}_{L^2(\elmt{})}\leq Ch^{-1}_{{\elmt{}}}\norma{g}_{L^2(\elmt{})},\qquad \norma{g}_{L^2(\dK)}\leq Ch_{\elmt{}}^{-1/2}\norma{g}_{L^2(\elmt{})},
    \label{eqn:inverse_inequalities}
\end{equation}
 where the constant $C$ is independent of the mesh size $h_{\elmt{}}$, but depends on polynomial degree $\dgdeg$ and the shape regularity of the mesh. 
\end{lemma}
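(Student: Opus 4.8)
The plan is to reduce both inequalities to norm equivalences on a fixed reference element, where they hold automatically because $\Pk{\dgdeg}(\elmt{})$ is finite-dimensional, and then recover the $h_{\elmt{}}$-dependence by a scaling argument. First I would fix a reference element $\widehat{K}$ (say $[-1,1]^d$) and let $F_{\elmt{}}:\widehat{K}\to\elmt{}$ denote the affine map taking $\widehat{K}$ onto the physical element $\elmt{}$; since $\elmt{}$ is rectangular this map is the composition of a translation with a diagonal dilation whose entries are all comparable to $h_{\elmt{}}$ under the shape-regularity assumption. For $g\in\Pk{\dgdeg}(\elmt{})$ set $\widehat{g}=g\circ F_{\elmt{}}\in\Pk{\dgdeg}(\widehat{K})$.

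The change of variables then gives the scaling relations $\norma{g}_{L^2(\elmt{})}^2\simeq h_{\elmt{}}^{d}\,\norma{\widehat{g}}_{L^2(\widehat{K})}^2$, $\norma{\nabla_x g}_{L^2(\elmt{})}^2\simeq h_{\elmt{}}^{d-2}\,\norma{\nabla_{\widehat{x}}\widehat{g}}_{L^2(\widehat{K})}^2$ (the extra factor $h_{\elmt{}}^{-2}$ coming from the chain rule applied to the dilation), and $\norma{g}_{L^2(\dK)}^2\simeq h_{\elmt{}}^{d-1}\,\norma{\widehat{g}}_{L^2(\partial\widehat{K})}^2$, where the implied constants depend only on the shape-regularity constant. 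Next, on the fixed reference element the maps $\widehat{g}\mapsto\norma{\nabla_{\widehat{x}}\widehat{g}}_{L^2(\widehat{K})}$ and $\widehat{g}\mapsto\norma{\widehat{g}}_{L^2(\partial\widehat{K})}$ are continuous seminorms on the finite-dimensional space $\Pk{\dgdeg}(\widehat{K})$, hence each is controlled by the $L^2(\widehat{K})$-norm: there is a constant $\widehat{C}=\widehat{C}(\dgdeg,d)$ with $\norma{\nabla_{\widehat{x}}\widehat{g}}_{L^2(\widehat{K})}\le\widehat{C}\norma{\widehat{g}}_{L^2(\widehat{K})}$ and $\norma{\widehat{g}}_{L^2(\partial\widehat{K})}\le\widehat{C}\norma{\widehat{g}}_{L^2(\widehat{K})}$. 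Substituting the scaling relations and cancelling the common powers of $h_{\elmt{}}$ yields $\norma{\nabla_x g}_{L^2(\elmt{})}\le Ch_{\elmt{}}^{-1}\norma{g}_{L^2(\elmt{})}$ and $\norma{g}_{L^2(\dK)}\le Ch_{\elmt{}}^{-1/2}\norma{g}_{L^2(\elmt{})}$, as claimed.

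The only genuinely delicate point is keeping the constant uniform across the whole family of elements: because $\elmt{}$ may be an anisotropic rectangle, the dilation in $F_{\elmt{}}$ need not be isotropic, so I would invoke the shape-regularity hypothesis precisely to guarantee that all edge lengths of $\elmt{}$ are comparable to $h_{\elmt{}}$. This is what makes each power of $h_{\elmt{}}$ appearing above a single uniform scale rather than a direction-dependent one, and it is also what prevents $\widehat{C}$ from degenerating. Everything else is the routine bookkeeping of Jacobian factors, and the finite-dimensional equivalence of norms on $\Pk{\dgdeg}(\widehat{K})$ requires no further argument.
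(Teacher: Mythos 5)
Your proposal is correct and coincides with the standard argument the paper relies on: the paper states this lemma without proof, citing \cite{ciarlet2002finite}, and your reference-element scaling proof (affine map to $\widehat{K}$, equivalence of norms on the finite-dimensional space $\Pk{\dgdeg}(\widehat{K})$, Jacobian bookkeeping with shape regularity controlling the anisotropy of the rectangular elements) is precisely the classical proof found in that reference. Nothing is missing.
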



\subsection{Discretization in angular space}

In angular space, we apply the discrete ordinates method $S_{N_{v}}$ for the 1D-steady-state case and $S_{\Nang}$ for the time-dependent model. For the steady state case this is the Gauss-Legendre quadrature points $\{v_j\}_{j=1}^{N_v}$ and weights $\{\omega_j\}_{j=1}^{N_v}$ in $[-1,1]$. By approximating the macroscopic density with the numerical quadrature, we obtain the $S_{N_v}$ system which seeks $\psi(x,v_j,t)\approx\psij{j}(x,t)$ satisfying
\begin{align}
\label{eq:steady}
    &\modelSteadyOne{j}{}\\
    & \barp(x,t) =\frac{1}{2}\sum_{j=1}^{N_v}\omega_j\psij{j}(x).
\end{align}

That is, equation \eqref{eq:1d_slab} is solved at a set of quadrature points. Analogously for the time-dependent problem \eqref{eq:hd_rt_reduced} using the $S_{\Nang}$ nodes $\{\angj{j}\}_{j=1}^{N_v}$ and weights $\{\omega_j\}_{j=1}^{N_v}$, we obtain the $S_{\Nang}$ system which seeks $\psi\vard{j}\approx\psij{j}(\bx,t)$ satisfying

\begin{subequations}
\label{eq:rte_sn}
\begin{align}
&\partial_t\psij{j}(\bx,t)+\angj{j}\cdot\nabla_{\bx}\psij{j}(\bx,t) =\sigma(\overline{\psi}(\bx,t)-\psij{j}(\bx,t))\label{eqn:md_rt_sn_disc}\\   
&\barf{\psi}(x)=\frac{1}{m(\Sang)}\quadf{\Nang}{\psij{j}}.    \label{eqn:disc_aver}
\end{align}
\end{subequations}
where $\bx\in \BX, \;1\leq j\leq \Nang.$

We note that the accuracy of the $S_N$ approximation is given by \cref{lem:sn_approx} applied to angular space.

\subsection{Spatial discretization via the DG method}

We apply high order upwind discontinuous Galerkin (DG) method to the $S_N$ system \eqref{eq:rte_sn}.  That is, 
we seek $\psijh{j} \in \Vh{\dgdeg}$ such that for all $\test\in \Vh{\dgdeg}$ and $1\leq j\leq \Nang$:
\begin{align}
    \left(\dpart{t}{}{\psijh{j}}, \test\right)_{\elmt{}} - \left(\psijh{j}\left(\angj{j}\cdot\nabla_x,\test\right)\right)_{\elmt{}} + &\left(\flux{\angj{j}\psijh{j}}{},\test\right)_{\dK}\notag\\
    & =\left(\sigma(\barfh{\psij{j}}-\psijh{j}),\test\right)_{\elmt{}}. \label{eqn:upwind_md}
\end{align}
Here, $\flux{\angj{j}\psijh{j}}{\dK}$ denotes the upwind numerical flux defined by :
\begin{equation}
    \flux{\angj{j}\psijh{j}}{}\bigg|_{\dK}=\left(\dgavg{\psijh{j}\Omega}{\dK}+\frac{|\Omega\cdot\normal_{\dK}|}{2}\dgjump{\psijh{j}}{\dK}\right)\normal_{\dK},
    \label{eqn:upwind_flux}
\end{equation}
where $\normal$ is the outward normal direction with respect to the element $\elmt{}$.


For the one-dimensional case, we consider the case of steady-state and denote the domain as $X=[x_L,x_R]$. For simplicity, we assume a uniform mesh given by $x_L=x_0<x_1<\dots<x_{\Nx}=x_R$ with mesh size $\dx=\frac{x_R-x_L}{\Nx}$. Additionally, we set $\angx{j}=\cos(\angj{j}).$ Then, the upwind DG scheme is: Find $\psijh{j} \in \Vh{\dgdeg},$ and $ j = 1,\dots, \NangO$ such that:
\begin{align}
\label{eq:1d_sn_dg}
-(\angx{j}\psijh{j},\dpart{x}{}{\test})_{\elmt{\Kindx}} & + (\sigt\psijh{j},\test )_{\elmt{\Kindx}}
+\flux{\angx{j}\psijh{j}}{}(x_{\Kindx})\test(x_p^-)-\flux{\angx{j}\psijh{j}}{}\test(x_{p-1}^+)\notag\\
&=(\sigma_s\barf{\psijh{}},\test)_{\elmt{\Kindx}},
\quad \forall 1\leq \Kindx \leq N_x,\;\test\in \Vh{\dgdeg}. 
\end{align}
where $K_\Kindx = (x_{\Kindx-1},x_{\Kindx})$ represents the $\Kindx-$th element.
Here, $x_{\Kindx}^{\pm}$ stands for the right and left limit of the function value.
For the one-dimensional case, the upwind flux is simply
\begin{equation}
\label{eq:1d_flux}
\flux{\angx{j}\psijh{j}} = 
\begin{cases}
    \angx{j}\psijh{j}(x_{\Kindx}^{-}),\quad  \angx{j} >0,\\
    \angx{j}\psijh{j}(x_{\Kindx}^{+}),\quad  \angx{j} \leq 0.
\end{cases}
\end{equation}
Particularly, given inflow boundary conditions,  $\psijh{j}(x^-_0)=\psij{j}(x_0,\angx{j})$ for $\angx{j}>0$ and $\psijh{j}(x^+_N)=\psij{j}(x_N,\angx{j})$ for $\angx{j}<0$.

We emphasize that the upwind DG discretization with polynomial order $\dgdeg\geq 1$ is proven to be asymptotically preserving for the radiative transport equation (RTE) \cite{Adams2002FastIM,guermond2010asymptotic,sheng2021uniform}. Hence, it can capture the correct diffusion limit of RTE without resolving the particle mean free path.

\subsection{Error estimates for the approximation}

For the error estimates, recall that we define $\eta^j_h=\Pi^k\psij{j}-\psij{j}$ to be the error between the projection of the exact solution and the exact solution and $\xi^j_h=\Pi^k\psij{j}-\psij{j}_h$ to be the error between the projection and the approximation. It is easy to see that $e^j_h=\psij{j}-\psij{j}_h=\xi^j_h-\eta^j_h$. From the approximation, Lemma \ref{lem:approximation_properties}, the following inequality holds: 
\begin{equation}
    \norma{\eta^j_h}\leq Ch^{\dgorder}\norma{\psij{j}}_{\dgorder,\BX},\,1\leq j\leq \Nang
\end{equation}
This will be used frequently.

We also have the following estimate:
\begin{theorem} \label{thm:dg_error_time_dep} Let $\dgdeg\geq 0$ and let $\psij{j},\, 1\leq j\leq \Nang$ be the exact solutions to \eqref{eq:rte_md} and assume that $\psij{j}\in C([0,T];H^{\dgorder}(\BX)),\, 1\leq j\leq \Nang$. If $\psijh{j}$ is an approximation obtained via Equation \eqref{eqn:upwind_md} with the numerical initial conditions $\psijh{j}(\cdot,0)=\Pi^\dgdeg\psij{j}_0$, then 
    \begin{equation}
        \left(\quadf{\Nang}{\norma{(\psij{j}-\psijh{j})(t)}^2_{L^2(\BX)}}\right)^{1/2}\leq C\dx^{\dgorder/2},\,\forall t\in [0,T].
    \end{equation}
    Here, the constant $C$ depends on the upper bound of $\quadf{\Nang}{\norma{\psij{j}}_{\dgorder,\BX}}$ .
\end{theorem}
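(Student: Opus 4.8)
The plan is to carry out a weighted DG energy estimate adapted to the $S_{\Nang}$ coupling. I would split the error as $e_h^j = \xi_h^j - \eta_h^j$, with projection error $\eta_h^j = \Pi^k\psij{j} - \psij{j}$ and $\xi_h^j = \Pi^k\psij{j} - \psijh{j} \in \Vh{k}$, and estimate only $\xi_h^j$, since $\eta_h^j$ is already controlled by $\norma{\eta_h^j}\le Ch^{\dgorder}\norma{\psij{j}}_{\dgorder,\BX}$ from \cref{lem:approximation_properties}. Because the scheme starts from $\psijh{j}(\cdot,0) = \Pi^k\psij{j}_0$, we have $\xi_h^j(\cdot,0)=0$, so the task reduces to controlling the temporal growth of $\quadf{\Nang}{\norma{\xi_h^j(t)}^2}$; the triangle inequality together with the projection bound $\quadf{\Nang}{\norma{\eta_h^j}^2}\le Ch^{2\dgdeg+2}$ then delivers the stated rate $\dx^{\dgorder/2}=h^{\dgdeg+1/2}$.

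First I would set up the error equation. Since each exact ordinate solution $\psij{j}$ is continuous across interfaces, its upwind flux coincides with its trace and it satisfies the weak form \eqref{eqn:upwind_md} with the exact angular average $\barf{\psi}$ replacing the quadrature average $\barfh{\psij{j}}$. Subtracting the scheme yields, for each $j$, an equation for $e_h^j$ whose right-hand side contains the scattering coupling $\sigma(\barf{e_h} - e_h^j)$ together with the angular-quadrature consistency term $\sigma\big(\barf{\psi} - \tfrac{1}{m(\Sang)}\quadf{\Nang}{\psij{j}}\big)$; the latter is of size $CN^{-s}$ by \cref{lem:sn_approx} and is subordinate to the spatial error (equivalently, one may read $\psij{j}$ as the semidiscrete $S_{\Nang}$ solution so that only the spatial contribution survives).

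Next, test with $\test=\xi_h^j$, sum over elements, and take the weighted angular sum $\quadf{\Nang}{(\cdot)}$. Two structural facts are essential. First, the volume projection-error terms vanish by $L^2$-orthogonality: $(\dpart{t}{}{\eta_h^j},\xi_h^j)$ because $\Pi^k$ commutes with $\dpart{t}{}{}$, and $(\eta_h^j,\angj{j}\cdot\nabla_x\xi_h^j)$ because $\angj{j}\cdot\nabla_x\xi_h^j$ stays in $\Vh{k}$ for the tensor-product space. Second, both dissipative mechanisms have a favorable sign: the upwind flux contributes the nonnegative interface term $\tfrac12\quadf{\Nang}{\sum_e\int_e|\angj{j}\cdot\normal|\,\dgjump{\xi_h^j}{e}^2}$, while the scattering operator is nonnegative in the weighted inner product, since a Cauchy--Schwarz step with the quadrature weights gives $m(\Sang)\norma{\barf{\xi_h}}^2 \le \quadf{\Nang}{\norma{\xi_h^j}^2}$ and hence $\quadf{\Nang}{\sigma(\barf{\xi_h}-\xi_h^j,\xi_h^j)_{\BX}}\le 0$. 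I would discard the scattering term by its sign and retain the upwind interface dissipation on the left, so that $\tfrac12\tfrac{d}{dt}\quadf{\Nang}{\norma{\xi_h^j}^2}$ plus the interface dissipation is bounded by the projection remainder: the volume scattering term $-\quadf{\Nang}{\sigma(\barf{\eta_h}-\eta_h^j,\xi_h^j)_{\BX}}$, handled by Cauchy--Schwarz and $\norma{\eta_h^j}\le Ch^{\dgorder}$, and the interface flux term $\quadf{\Nang}{(\flux{\angj{j}\eta_h^j}{},\xi_h^j)_{\dK}}$.

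The main obstacle, which also fixes the final half-integer power, is this interface flux term. I would expand $\flux{\angj{j}\eta_h^j}{}$ into its central and jump parts and observe that each carries a factor $\angj{j}\cdot\normal$; distributing $|\angj{j}\cdot\normal|^{1/2}$ so that the $\xi_h$ factor appears as $|\angj{j}\cdot\normal|^{1/2}\dgjump{\xi_h^j}{e}$, Young's inequality then absorbs the $\xi_h$ contribution into the retained interface dissipation and leaves only $\quadf{\Nang}{\norma{\eta_h^j}^2_{L^2(\dK)}}$, bounded via the trace estimate $\norma{\eta_h^j}_{L^2(\dK)}\le Ch^{\dgdeg+1/2}\norma{\psij{j}}_{\dgorder}$ from \cref{lem:approximation_properties}. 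This same pairing is what handles grazing directions $|\angj{j}\cdot\normal|\approx 0$, for which the dissipation is weak. Collecting the bounds gives $\tfrac{d}{dt}\quadf{\Nang}{\norma{\xi_h^j}^2}\le C\,h^{2\dgdeg+1}+C\quadf{\Nang}{\norma{\xi_h^j}^2}$, and Gronwall's inequality with $\xi_h^j(\cdot,0)=0$ yields $\quadf{\Nang}{\norma{\xi_h^j(t)}^2}\le Ch^{2\dgdeg+1}$, i.e.\ $\norma{\xi_h}\le Ch^{\dgdeg+1/2}$, which is exactly the claimed rate. The delicate bookkeeping throughout is to keep every constant uniform in the angular index $j$ so the weighted sum closes, and to exploit the scattering coupling through its sign rather than a brute-force Cauchy--Schwarz, which would sacrifice the dissipation needed to absorb the interface terms.
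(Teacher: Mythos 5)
Your proposal is correct and takes essentially the same route as the paper's own proof: the same splitting $e_h^j=\xi_h^j-\eta_h^j$, testing with $\xi_h^j$, Galerkin orthogonality annihilating the volume projection terms, the upwind interface dissipation absorbing the flux--trace term via Young's inequality and the trace estimate $\norma{\eta_h^j}_{L^2(\dK)}\leq Ch^{k+1/2}\norma{\psij{j}}_{k+1}$, and Gronwall with $\xi_h^j(\cdot,0)=0$, yielding the $h^{k+1/2}$ rate. The only minor differences are that you bound the scattering term involving $\eta_h^j$ by Cauchy--Schwarz (the paper notes it vanishes exactly by $L^2$-orthogonality, since $\eta_h^i\perp \Vh{\dgdeg}$ for every $i$), and your explicit handling of the angular-quadrature consistency term (reading $\psij{j}$ as the semidiscrete $S_N$ solution) is, if anything, more careful than the paper's.
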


Now we analyze the error coming from the angular part. 
\begin{theorem}\label{thm:DO_error_time_dep}Let $N$ be the accuracy of the $S_N$ quadrature, if $\psi\in C([0,T];L^2(\BX;H^{s}(\Sang)))$, then we have the following error estimate
    \begin{equation}
    \left(\sum_{j=1}^{N_v}\omega_j\norma{\psi(\cdot,\angj{j})-\psi^j}_{L^2(\BX)}^2\right)^{1/2}\leq D\Nang^{-s},
\end{equation}
That depends on $c_s$ is an universal constant depending only on $s$ and the upper bounds of $\norma{\psi}_{L^2(\BX;H^s(\Sang))}$.
\end{theorem}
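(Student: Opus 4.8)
The plan is to derive the evolution equation satisfied by the pointwise-in-angle error $\rho^j:=\psi(\bx,\angj{j},t)-\psi^j$ and then run a weighted energy estimate in which the only genuine source term is the $S_N$ consistency (quadrature) error, controlled by \cref{lem:sn_approx}. First I would evaluate the exact reduced equation \eqref{eq:hd_rt_reduced} at the node $\angj{j}$ and subtract the $S_N$ system \eqref{eq:rte_sn}. Writing $\bar\rho:=\frac{1}{m(\Sang)}\quadf{\Nang}{\rho^j}$ for the discrete average of the errors and decomposing the mismatch between the exact average $\barf{\psi}$ and the discrete one $\barfh{\psi}$ of \eqref{eqn:disc_aver} as
\[
\barf{\psi}-\barfh{\psi}=\underbrace{\Big(\tfrac{1}{m(\Sang)}\int_{\Sang}\psi\,d\ang-\tfrac{1}{m(\Sang)}\quadf{\Nang}{\psi(\bx,\angj{j},t)}\Big)}_{=:\,Q}+\bar\rho,
\]
the error solves $\partial_t\rho^j+\angj{j}\cdot\nabla_{\bx}\rho^j=\sigma Q+\sigma\bar\rho-\sigma\rho^j$. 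The crucial structural point is that $Q$ is a pure quadrature error of the \emph{exact} angular profile; it is independent of the discrete solution and therefore does not feed back into the error dynamics.

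Next I would test with $\rho^j$, integrate over $\BX$, and sum against $\omj$ to obtain an identity for $E(t):=\quadf{\Nang}{\norma{\rho^j(t)}_{L^2(\BX)}^2}$. Under periodic boundary conditions (or, with matching inflow data, an outflow-dissipative boundary term) the transport contribution $\quadf{\Nang}{(\angj{j}\cdot\nabla_{\bx}\rho^j,\rho^j)_{\BX}}$ is nonnegative and may be discarded. The scattering contribution is $\sigma\quadf{\Nang}{\big[(\bar\rho,\rho^j)_{\BX}-\norma{\rho^j}_{L^2(\BX)}^2\big]}$; using positivity of the $S_N$ weights together with a Jensen/Cauchy--Schwarz estimate one shows $\quadf{\Nang}{(\bar\rho,\rho^j)_{\BX}}\le\quadf{\Nang}{\norma{\rho^j}_{L^2(\BX)}^2}$, so this whole block is $\le 0$ and is likewise discarded. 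What remains is the consistency term $\sigma\quadf{\Nang}{(Q,\rho^j)_{\BX}}$, and since $Q$ is independent of $j$, Cauchy--Schwarz (first in space, then on the $\omj$-weighted sum with $\sum_j\omj=1$) yields
\[
\tfrac12\frac{d}{dt}E(t)\le \sigma\,\norma{Q(t)}_{L^2(\BX)}\,E(t)^{1/2},\qquad\text{hence}\qquad \frac{d}{dt}E(t)^{1/2}\le \sigma\,\norma{Q(t)}_{L^2(\BX)}.
\]

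Finally I would bound the source. Applying \cref{lem:sn_approx} pointwise in $\bx$ to the function $\ang\mapsto\psi(\bx,\ang,t)$ and then integrating over $\BX$ gives $\norma{Q(t)}_{L^2(\BX)}\le C_s N^{-s}\norma{\psi(\cdot,t)}_{L^2(\BX;H^s(\Sang))}$. With the matching numerical initial data $\rho^j(0)=0$, so $E(0)=0$, integrating the differential inequality over $[0,t]$ gives $E(t)^{1/2}\le \sigma\int_0^t C_s N^{-s}\norma{\psi(\cdot,\tau)}_{L^2(\BX;H^s(\Sang))}\,d\tau\le D\,N^{-s}$, which is exactly the claimed estimate, with $D$ depending on $c_s$, $T$, $\sigma$, and $\sup_{[0,T]}\norma{\psi}_{L^2(\BX;H^s(\Sang))}$.

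The step I expect to be the main obstacle is establishing the dissipativity of the discrete scattering operator, i.e. that the $(\bar\rho,\rho^j)$ block cannot drive growth: here one must use positivity of the quadrature weights together with the correct normalization of the discrete average, since an inconsistent normalization between \eqref{eqn:disc_aver} and the convention $\sum_j\omj=1$ would spoil the Jensen step. The remaining delicate point is confirming that $Q$ genuinely decouples from the error, so that the convergence rate is dictated solely by \cref{lem:sn_approx}, and that the boundary treatment renders the transport term harmless; both become routine once the decomposition above is in place.
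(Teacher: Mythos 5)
Your proposal is correct and takes essentially the same route as the paper's own proof: the identical error equation with the pure quadrature-consistency source (the paper's $\eta$, your $\sigma Q$), the same $\omega_j$-weighted energy estimate in which the transport term drops under periodic boundary conditions and the discrete scattering block is dissipative (the paper establishes this via the exact identity $\sum_j\omega_j e^j\overline{\mathbf{e}}-\sum_j\omega_j(e^j)^2=-\sum_j\omega_j(e^j-\overline{\mathbf{e}})^2$, where you invoke Jensen/Cauchy--Schwarz with positive weights, which amounts to the same thing), followed by the same application of Lemma~\ref{lem:sn_approx} and time integration from zero initial error. Your closing remark about the weight normalization is a fair caveat, since the paper itself mixes the conventions $\sum_j\omega_j=1$ and $\overline{\mathbf{e}}=\frac{1}{m(\Sang)}\sum_j\omega_j e^j$, but this does not affect the validity of either argument.
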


For the proofs see the Appendix.

 By combining Theorems \ref{thm:dg_error_time_dep} and \cref{thm:DO_error_time_dep}, as well as the triangle inequality, we can conclude the following estimate:
\begin{theorem}\label{thm:total_error_bound} For polynomial degree $\dgdeg\geq 1$, if $\psij{}\in C([0,T];L^2(\BX;H^s(\Sang)))$ and each $\psij{j}\in C([0,T];H^{\dgorder}(\BX)),\, j = 1,\dots, \Nang$, the semi-discrete discrete-ordinates approximation via the Discontinuous Galerkin method for the time-dependent radiation transport equation,  \cref{eq:rte_md}, has the following error estimate
    \begin{equation}
    \left(\quadf{\Nang}{\norma{\psi(\cdot,\angj{j})-\psi_h^j}_{L^2(\BX)}^2}\right)^{1/2}\leq Ch^{\dgorder/2}+D {\Nang}^{-s},
    \end{equation}
where $C$ and $D$ depend on upper bounds of $\quadf{\Nang}{\norma{\psij{j}}_{\dgorder,\BX}},\, \norma{\psij{}}_{L^2(\BX;H^s(\Sang))}$ respectively. 
\end{theorem}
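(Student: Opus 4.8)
The plan is to obtain this estimate as a direct triangle-inequality splitting of the total error into its angular (discrete-ordinates) and spatial (DG) components, each of which is already controlled by one of the two preceding theorems. First I would insert the semi-discrete $S_{\Nang}$ solution $\psij{j}$ of \eqref{eq:rte_sn} — discretized in angle but exact in space and time — as an intermediate quantity, writing for each $j$
\begin{equation*}
\psi(\cdot,\angj{j}) - \psijh{j} = \big(\psi(\cdot,\angj{j}) - \psij{j}\big) + \big(\psij{j} - \psijh{j}\big).
\end{equation*}
The first bracket is exactly the angular quadrature error measured in \cref{thm:DO_error_time_dep}, while the second is the spatial DG error measured in \cref{thm:dg_error_time_dep}, where $\psij{j}$ plays the role of the ``exact'' target object for the DG scheme.

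The key structural point is that the map $F=(f^1,\dots,f^{\Nang})\mapsto \big(\quadf{\Nang}{\Lp{f^j}{\BX}{2}^2}\big)^{1/2}$ is a genuine norm on angle-indexed fields: because the quadrature weights $\omj$ are strictly positive (and normalized so that $\sum_j\omj=1$), it is precisely the weighted $\ell^2(L^2(\BX))$ norm, and Minkowski's inequality applies. I would therefore conclude
\begin{equation*}
\Big(\quadf{\Nang}{\Lp{\psi(\cdot,\angj{j})-\psijh{j}}{\BX}{2}^2}\Big)^{1/2}
\leq \Big(\quadf{\Nang}{\Lp{\psi(\cdot,\angj{j})-\psij{j}}{\BX}{2}^2}\Big)^{1/2}
+ \Big(\quadf{\Nang}{\Lp{\psij{j}-\psijh{j}}{\BX}{2}^2}\Big)^{1/2},
\end{equation*}
and then bound the first summand by $D\Nang^{-s}$ via \cref{thm:DO_error_time_dep} and the second by $C\dx^{\dgorder/2}$ via \cref{thm:dg_error_time_dep}, which assembles the stated bound.

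Since the splitting itself is elementary, the only genuine care is in reconciling the hypotheses so that both invoked theorems fire simultaneously and, crucially, so that their constants do not secretly depend on $\Nang$. \cref{thm:dg_error_time_dep} requires $\psij{j}\in C([0,T];H^{\dgorder}(\BX))$ for the semi-discrete field and produces a constant controlled by $\quadf{\Nang}{\Hs{\psij{j}}{\BX}{\dgorder}}$, whereas \cref{thm:DO_error_time_dep} requires $\psi\in C([0,T];L^2(\BX;H^s(\Sang)))$. I expect the main obstacle to be verifying that the weighted Sobolev regularity of the $S_{\Nang}$ solution is bounded uniformly in the number of ordinates, so that $C$ is genuinely $\Nang$-independent; this is what guarantees the two error mechanisms are additive rather than coupled. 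This uniformity should follow because the $S_{\Nang}$ system inherits its spatial regularity from the smoothness of $\psi$ in $\bx$ assumed in the hypotheses, uniformly in the angular index.
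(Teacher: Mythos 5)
Your proposal is correct and is essentially the paper's own argument: the paper establishes \cref{thm:total_error_bound} precisely by inserting the $S_{\Nang}$ semi-discrete solution $\psij{j}$ as the intermediate quantity and applying the triangle inequality in the weighted $\ell^2(L^2(\BX))$ norm, bounding the angular piece by \cref{thm:DO_error_time_dep} and the spatial DG piece by \cref{thm:dg_error_time_dep}. Your extra care about the positivity of the quadrature weights and the $\Nang$-independence of the constants is consistent with how both theorems (and \cref{thm:total_error_bound} itself) phrase their constant dependencies through the weighted sums $\quadf{\Nang}{\norma{\psij{j}}_{\dgorder,\BX}}$ and $\norma{\psij{}}_{L^2(\BX;H^s(\Sang))}$.
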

  
\section{Superconvergence and extraction\label{sec:superconvergence-filter}}

In this section we discuss the superconvergence of the discontinuous Galerkin approximation to Equation \cref{eq:rte_md}.  We begin by introducint the superconvergent points in the one-dimensional steady-state case.  We then proceed to discuss the multi-dimensional time-dependent case and the underlying superconvergence in the negative-order norm. In Section \cref{sec:SIAC} we show how to extract this information via the Smoothness-Increasing Accuracy-Conserving (SIAC) filter.

In our superconvergence analysis, we assume periodic or zero inflow boundary conditions as well as a constant scattering cross section, $\sigs$. Note that though our analysis is restricted to constant scattering cross sections,  we numerically observe improved accuracy after post-processing for general cross sections. 

\subsection{Superconvergence of the steady state problem for a 1D slab geometry}

In this section, we discuss the superconvergent points of the one-dimensional steady-state model \eqref{eq:steady}.

The discontinuous Galerkin approximation given in  \cref{eq:1d_sn_dg} in terms of the local coordinate mapping becomes: Find $\psijh{j}\in\Vh{\dgdeg}$ such that
\begin{align}
\label{eq:1ddg}
    0 =  & -(\angx{j}\psijh{j},(\test)')_{\I} +  \left(\flux{\angx{j}\psijh{j}}{\Kindx}\test(1) -\flux{\angx{j}\psijh{j}}{\Kindx-1}\test(-1)\right)\\
    & \qquad \qquad \qquad   + \frac{\dx}{2}\left[(\sigt\psijh{j},\test)_{\I} - (\sigs\barfh{\Psi},\test)_{\I}\right] \qquad \forall \test \in \Pk{\dgdeg}.\notag
\end{align}
The mapping to the reference element, $\I = [-1,1]$, is defined by $\map = \frac{2}{\dx}\left(x-\bxn{\Kindx}\right)-1,$ where $\bxn{\Kindx} = \frac{1}{2}(x_{\Kindx}+x_{\Kindx+1}).$  
We then have the following theorem: 

\begin{theorem}
\label{thm:steady_state}
\textbf{(Superconvergence at Radau points for steady state problem)}    
Suppose $\psij{j}$ is a Lipschitz continuous function belonging to $C^{\infty}([x_L,x_R])$,  with $\sigs$ and $\sigt$ being constant, and $\psijh{j}\in \Vh{\dgdeg}$ being the approximation to the $S_N$ system \eqref{eq:steady} and its DG approximation \eqref{eq:1d_sn_dg}. Define the error function as $\ehj{j}(x,t) = \psij{j}(x,t)-\psijh{j}(x,t)$. When the mesh size is sufficiently small, the following statements hold.
\begin{enumerate}
    \item When $\angx{j}>0$, $\ehj{j}(x_p^{-},t)=\Oh(\dx^{2\dgdeg+2})$ for $1\leq p\leq \NangO$. When $\angx{j}<0$, $\ehj{j}(x_{\Kindx-1}^{+},t)=\Oh(\dx^{2\dgdeg+2})$ for $1\leq p\leq \NangO$.
    \item Let the roots of the $(\dgorder)$-th order right and left Radau polynomial $R^{\pm}_{\dgorder}$ be $\{\nu^+_\ell\}_{\ell=1}^{\dgdeg}\bigcup\{1\}$ and $\{-1\}\bigcup\{\nu^-_\ell\}_{\ell=1}^{\dgdeg}$. For the interior roots, when $\angx{j}>0$, $$\ehj{j}\left(\bxn{\Kindx}+\frac{2}{\dx}\nu^+_\ell\right)=\Oh(\dx^{\dgdeg+2})$$ for $1\leq \ell\leq \dgdeg$. And when $\angx{j}<0$, $$\ehj{j}\left(\bxn{\Kindx}+\frac{2}{\dx}\nu^-_\ell\right)=\Oh(\dx^{\dgdeg+2})$$ for $1\leq \ell\leq \dgdeg$. 
\end{enumerate}
\end{theorem}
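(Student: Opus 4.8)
The plan is to adapt the coefficient-matching framework of Adjerid et al. to the coupled $S_N$ system, exploiting that in reference coordinates the reaction and coupling terms are one power of $\dx$ smaller than the advection term, so that the leading error profile is a \emph{fixed} Radau polynomial on every element and for every ordinate, weighted only by a scalar. First I would fix an ordinate and take $\angx{j}>0$ (the case $\angx{j}<0$ is the mirror image about $\xi=0$). Since the exact solution satisfies the scheme \eqref{eq:1ddg} consistently, subtracting it from the numerical scheme gives the error equation for $\ehj{j}=\psij{j}-\psijh{j}$ on the reference element $\I=[-1,1]$: for all $\tau\in\Pk{\dgdeg}$,
\[
-(\angx{j}\ehj{j},\tau')_{\I}+\angx{j}\ehj{j}(1)\,\tau(1)-\angx{j}\ehj{j}(x_{\Kindx-1}^-)\,\tau(-1)+\tfrac{\dx}{2}(\sigt\ehj{j},\tau)_{\I}=\tfrac{\dx}{2}(\sigs\barf{e},\tau)_{\I},
\]
where $\barf{e}=\tfrac12\sum_{j'}\omega_{j'}\ehj{j'}$ carries the inter-ordinate coupling and the inflow flux at $x_{\Kindx-1}$ is inherited from the upwind element. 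I would expand $\ehj{j}\big|_{\elmt{\Kindx}}=\sum_{i\ge 0}e_{\Kindx,i}\leg{i}$ in Legendre modes and record the structural fact that, because $\psijh{j}\big|_{\elmt{\Kindx}}\in\Pk{\dgdeg}$ has no modes above degree $\dgdeg$, the coefficients $e_{\Kindx,i}$ with $i>\dgdeg$ coincide with those of the smooth exact solution and are therefore \emph{known}, with $e_{\Kindx,i}=\Oh(\dx^{i})$; in particular $e_{\Kindx,\dgorder}=\Oh(\dx^{\dgorder})$ is the driving coefficient, and a standard optimal $L^2$ estimate for the steady upwind DG scheme guarantees $e_{\Kindx,i}=\Oh(\dx^{\dgorder})$ for all $i\le\dgdeg$ as well.

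The matching step comes next. Testing the error equation with $\leg{0},\dots,\leg{\dgdeg}$ produces $\dgorder$ linear relations among the unknown low modes $e_{\Kindx,0},\dots,e_{\Kindx,\dgdeg}$, the known high modes, the inherited inflow value, and the source. The key observation is the scaling on $\I$: the advection and interface terms are $\Oh(1)$, whereas the reaction term and the coupling source both carry the factor $\dx/2$. Hence, at leading order, the system collapses to the pure-advection matching of the scalar case, which forces $e_{\Kindx,\dgdeg}=\pm e_{\Kindx,\dgorder}+\Oh(\dx^{\dgorder+1})$ (with sign $\mathrm{sign}(\angx{j})$) and $e_{\Kindx,i}=\Oh(\dx^{\dgorder+1})$ for $i<\dgdeg$. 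Therefore
\[
\ehj{j}\big|_{\elmt{\Kindx}}(\xi)=e_{\Kindx,\dgorder}\,\radpm{\dgorder}(\xi)+\Oh(\dx^{\dgorder+1}),
\]
so the leading error profile is the same Radau polynomial on every element, weighted only by $e_{\Kindx,\dgorder}$. This is exactly where the coupling is shown to be harmless: $\barf{e}=\Oh(\dx^{\dgorder})$, so after multiplication by $\dx/2$ it perturbs the error only at order $\dx^{\dgorder+1}$, leaving the Radau shape — and hence the superconvergent point set — independent of the coupling; at leading order the ordinates effectively decouple.

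From this representation both claims follow. At the $\dgdeg$ interior Radau roots, $\radpm{\dgorder}$ vanishes, so $\ehj{j}=\Oh(\dx^{\dgorder+1})=\Oh(\dx^{\dgdeg+2})$ there, which is statement 2. At the outflow endpoint, $\radpm{\dgorder}$ also vanishes, so the $\dx^{\dgorder}$ term again drops; to upgrade the rate all the way to $\Oh(\dx^{2\dgdeg+2})$ I would evaluate the nodal value $\ehj{j}(x_p^-)=\sum_i e_{\Kindx,i}$ and use the full chain of matching relations to show that the surviving contributions telescope, then close an induction on the element index $p$, the inflow error of $\elmt{\Kindx}$ being the outflow error of $\elmt{\Kindx-1}$, with the exact (or periodic) boundary condition supplying the base case $\ehj{j}(x_0^-)=0$.

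The main obstacle is precisely this last step combined with the coupling. The leading-order Radau shape is routine once the reference-element scaling is noticed, but the outflow rate $2\dgdeg+2$ requires verifying that the per-element injection into the outflow node is of sufficiently high order and that its accumulation over the $\Oh(\dx^{-1})$ elements — including the feedback through $\barf{e}$ — does not degrade the rate. Controlling this accumulation is where the real work lies, and it is the part that must replace the Runge–Kutta collocation argument that is unavailable for the coupled RTE system.
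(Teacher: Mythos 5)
Your proposal follows the same general framework as the paper (coefficient matching for a polynomial expansion of the error equation, in the spirit of Adjerid et al., with the observation that on the reference element the scattering and reaction terms carry an extra factor of $\dx/2$ and therefore do not pollute the leading-order profile). Your derivation of the leading-order Radau structure, and hence of statement 2 (the $\Oh(\dx^{\dgdeg+2})$ rate at interior Radau points), matches the paper's argument: testing with $\leg{0},\dots,\leg{\dgdeg}$, the $q=0$ test controls the boundary term, the $q\geq 1$ tests give orthogonality to $\Pk{\dgdeg-1}$, and the coupling through $\barf{e}$ enters only at the next order, so the profile is $\radpm{\dgorder}$ on every element, weighted by a scalar coefficient.

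The genuine gap is statement 1, the $\Oh(\dx^{2\dgdeg+2})$ rate at the outflow edge, which is the hardest part of the theorem and which you explicitly leave as a plan ("telescoping" plus induction over elements, with the accumulation over $\Oh(\dx^{-1})$ elements "where the real work lies"). This plan is not carried out, and as stated it points in the wrong direction: an accumulation-of-injections argument cannot by itself produce a \emph{doubling} of the order from $\dgdeg+1$ to $2\dgdeg+2$; summing per-element contributions typically costs a power of $\dx$, it does not gain $\dgdeg+1$ powers. The mechanism in the paper is instead local and structural. One writes the full expansion $\ehj{j}\big|_{\elmt{\Kindx}}=\sum_{m\geq \dgorder}\apj{j}{\Kindx}{m}\Qpm{}{m}(\map)(\dx/2)^m$ with \emph{element-independent} profiles $\Qpm{}{m}$, identifies $\Qpm{}{m}$ for $\dgdeg+2\leq m\leq 2\dgdeg+1$ explicitly as combinations of Legendre polynomials of degree strictly greater than $\dgdeg$ (hence orthogonal to the test space and to $\leg{q}'$), and then rearranges the order-$m$ error equation so that the boundary term $\bigl(\apj{j}{\Kindx}{m}-(-1)^q\apj{j}{\Kindx-1}{m}\bigr)\Qpm{}{m}(1)$ is equated to interior inner products that vanish for all $q=0,\dots,\dgdeg$ by Galerkin orthogonality. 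This forces the profiles of every order up to $2\dgdeg+1$ to vanish at the outflow node, so the first surviving contribution there is at order $2\dgdeg+2$ — no control of error accumulation across elements is required beyond the element-by-element induction that passes the (already superconvergent) inflow value forward. A secondary weak point: you invoke a "standard optimal $L^2$ estimate" ($e_{\Kindx,i}=\Oh(\dx^{\dgorder})$ for the low modes) for the coupled steady $S_N$--DG system as an input; this is neither proved nor cited in the paper (classical results of Lesaint--Raviart/Johnson--Pitkäranta type give only $\Oh(\dx^{\dgdeg+1/2})$ for upwind DG on steady transport), and the paper's construction deliberately avoids needing it by building the expansion directly from the error equation.
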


Notice that when $\sigs=\sigt=0$, the $S_N$ system \eqref{eq:1d_slab} degenerates to a $\NangO$ decoupled linear advection equation. As a result, Theorem \ref{thm:steady_state} implies that the upwind DG method has $2\dgdeg+2$ order of accuracy at downwind edges. In other words, for linear advection, the global superconvergence order for the downwind edge of each element is $2\dgdeg+2$ instead of $2\dgorder$ proved in \cite{adjerid2002posteriori}. Our proof, which will be outlined in Sec. \ref{sec:proof_outline}, can be seen as an extension of \cite{adjerid2002posteriori}. 
\subsection{Superconvergence for time dependent case}

The time-dependent case considers multi-dimensional approximations to \cref{eq:rte_sn}.  For this analysis, we will need to establish a bound on a weighted negative-order norm of the error as we utilized the $S_N$ system.

\begin{theorem} \label{thm:negative_order_estimate} Let $\dgdeg\geq 0$,  Let $\psij{j}$, $1\leq j\leq \Nang$ be the exact solutions to \eqref{eq:rte_md} and assume that $\psij{j}\in C([0,T];H^{\dgdeg+2}(X))$, $1\leq j\leq \Nang$. If $\psijh{j}$ is an approximation obtained via the DG formulation, \cref{eqn:upwind_md}, with the numerical initial conditions $\psijh{j}(\cdot,0)=\Pi^\dgdeg\psij{j}_0(\cdot)$, then 
\begin{equation}
    \norma{\Psi-\Psi_h}_{-(\dgorder),\BX}\leq C \dx^{2\dgdeg+1/2}.
    \label{eqn:negative_estimate}
\end{equation}
Where $\Psi=(\psi^j)_{j=1}^{N_{\ang}}$ and $\Psi_h=(\psi_h^j)_{j=1}^{N_{\ang}}$ Here, $\norma{\cdot}_{-(\dgorder),\BX}$ represents a weighted negative-order norm defined in \cref{eqn:negative-order-norm} and $C$ depends on upper bounds of $\norma{\psij{}}_{\dgdeg+2,\BX}$
\end{theorem}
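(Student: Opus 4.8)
The plan is to follow the duality framework of Cockburn–Luskin–Shu–Süli \cite{cockburn2003enhanced}, adapted to the $S_N$-coupled system. By the definition \eqref{eqn:negative-order-norm} of the weighted negative-order norm, it suffices to bound, uniformly over all $\Phi=(\phi^j)_{j=1}^{\Nang}\in(C_0^\infty(\BX))^{\Nang}$ normalized by $\quadf{\Nang}{\norma{\phi^j}_{\dgorder,\BX}^2}=1$, the weighted pairing $\quadf{\Nang}{(\ehj{j}(T),\phi^j)_{\BX}}$ at the final time. For each such $\Phi$ I would solve the dual problem \eqref{eq:dual_problem_angle_discretized} backward in time with final datum $\dualj{j}(\cdot,T)=\phi^j$; Lemma \ref{lem:regularity}, applied with $\mathbf{A}_j\equiv\angj{j}$ (constant, hence divergence-free and periodic), then supplies the uniform-in-time regularity bound $\quadf{\Nang}{\norma{\dualj{j}(\cdot,t)}_{\dgorder,\BX}^2}\le C$ for all $t\in[0,T]$, which prevents the dual data from amplifying along the backward evolution.

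\textbf{Duality identity.} The core step is to differentiate $\Theta(t):=\quadf{\Nang}{(\ehj{j}(t),\dualj{j}(t))_{\BX}}$ and integrate, so that $\quadf{\Nang}{(\ehj{j}(T),\phi^j)_{\BX}}=\Theta(T)=\Theta(0)+\int_0^T\Theta'(t)\,dt$. To evaluate $\Theta'$ I would replace $\partial_t\dualj{j}$ by the dual equation and replace $\quadf{\Nang}{(\partial_t\ehj{j},\Pi^\dgdeg\dualj{j})_{\BX}}$ using the Galerkin orthogonality of the DG error equation tested with $\tau^j=\Pi^\dgdeg\dualj{j}\in\Vh{\dgdeg}$. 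Splitting $\dualj{j}=\Pi^\dgdeg\dualj{j}+(\dualj{j}-\Pi^\dgdeg\dualj{j})$, the key algebraic fact is that all leading terms cancel up to projection-error remainders: the advection volume terms telescope against $\angj{j}\cdot\nabla\dualj{j}$, and the scattering contributions cancel because of the weighted telescoping identity $\quadf{\Nang}{(\overline{e_h}-\ehj{j},\dualj{j})_{\BX}}=\quadf{\Nang}{(\ehj{j},\bardual-\dualj{j})_{\BX}}$, which follows from the definition of the averaging operator $\Oavg{\cdot}$. What survives is an initial contribution $\Theta(0)$ together with an accumulated residual built only from $\dualj{j}-\Pi^\dgdeg\dualj{j}$, paired against $\ehj{j}$, its broken gradient, or its interelement jumps $\dgjump{\ehj{j}}{\Eh}$.

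\textbf{Estimates of the two contributions.} Since $\psijh{j}(\cdot,0)=\Pi^\dgdeg\psij{j}_0$ forces $\xi^j_h(0)=0$, one has $\ehj{j}(0)=-\eta^j_h(0)\perp\Vh{\dgdeg}$, so $\Theta(0)=-\quadf{\Nang}{(\eta^j_h(0),\dualj{j}(0)-\Pi^\dgdeg\dualj{j}(0))_{\BX}}$ is a product of two $\Oh(\dx^{\dgorder})$ approximation errors (Lemma \ref{lem:approximation_properties}), hence $\Theta(0)=\Oh(\dx^{2\dgdeg+2})$. For the accumulated term, each surviving piece carries a factor $\dualj{j}-\Pi^\dgdeg\dualj{j}$, of order $\dx^{\dgorder}$ in $L^2$, $\dx^{\dgdeg}$ in the broken $H^1$ seminorm, and $\dx^{\dgdeg+1/2}$ on $\Eh$ (Lemma \ref{lem:approximation_properties}); terms with $\partial_t\xi^j_h$ or $\angj{j}\cdot\nabla\xi^j_h$ drop out by projection orthogonality since these lie in $\Vh{\dgdeg}$, while the remaining volume term $(\ehj{j},\angj{j}\cdot\nabla(\dualj{j}-\Pi^\dgdeg\dualj{j}))_{\BX}$ and the edge terms are reduced, after integration by parts, to edge pairings controlled by the $L^2$ estimate of Theorem \ref{thm:dg_error_time_dep}, the trace/inverse inequalities of Lemma \ref{lem:inverse_inequality}, and the jump-dissipation bound $\int_0^T\quadf{\Nang}{\norma{\dgjump{\xi^j_h}{\Eh}}_{L^2(\Eh)}^2}\,dt\le C\dx^{2\dgdeg+1}$ extracted from the standard DG energy identity. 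Combining these with the uniform dual regularity via Cauchy–Schwarz in space and time yields $\int_0^T\Theta'\,dt=\Oh(\dx^{2\dgdeg+1/2})$, which dominates $\Theta(0)$ and gives \eqref{eqn:negative_estimate}.

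\textbf{Main obstacle.} I expect the decisive difficulty to be establishing the exact cancellation of the leading terms in $\Theta'$ in the presence of the $S_N$ scattering coupling: unlike scalar advection, the equations are coupled through $\bardual$ and $\overline{e_h}$, so one must verify carefully that the weighted sum telescopes and only projection-error residuals remain. The second, quantitative obstacle is tracking precisely where the half-order is lost: it enters when the order-$\dx^{\dgdeg}$ broken-gradient (or order-$\dx^{\dgdeg+1/2}$ edge) factor of the dual projection error is paired against the merely order-$(\dgdeg+\tfrac12)$ primal error available from Theorem \ref{thm:dg_error_time_dep}, together with the $\dx^{-1/2}$ in the inverse trace estimate, which degrades the optimal $2\dgdeg+1$ one would obtain from sharp $L^2$ estimates down to the stated $2\dgdeg+\tfrac12$. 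Ensuring that the regularity constant in Lemma \ref{lem:regularity} is genuinely uniform in $T$ is what keeps this bound free of spurious temporal growth.
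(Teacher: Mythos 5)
Your proposal follows essentially the same route as the paper: the Cockburn--Luskin--Shu--S\"uli duality framework with the dual problem \eqref{eq:dual_problem_angle_discretized} and regularity Lemma \ref{lem:regularity}, the splitting $\varphi^j=\Pi^\dgdeg\varphi^j+(\varphi^j-\Pi^\dgdeg\varphi^j)$ via Galerkin orthogonality, and the decomposition into an initial/projection term, a residual term, and a vanishing consistency term, which are precisely the paper's $\Theta_M$, $\Theta_N$, $\Theta_C$ of Lemmas \ref{lem:project}--\ref{lem:consistency}. Your order accounting also matches the paper's: $\Oh(\dx^{2\dgdeg+2})$ for the projection term from the two paired projection errors, exact cancellation of the scattering coupling by self-adjointness of the weighted angular average, and $\Oh(\dx^{2\dgdeg+1/2})$ for the residual from pairing the $\Oh(\dx^{\dgdeg+1/2})$ primal $L^2$ error against the dual projection-error factors.
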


For the case with constant scattering, we note that this estimate also holds for the divided-differences of the error.


The proof of the negative order norm is given in Section~\ref{sec:num-accuracy-td}. 
It relies on the following three estimates, which follow the same structure as in \cite{CLSS2}, 
with the additional consideration of the collision operator.

\begin{lemma}[Projection Estimate]\label{lem:project} Assume that the same assumptions hold as in Theorem \ref{thm:negative_order_estimate}, then, defining $\Theta_M = -\quadf{\Nang}{(\eta^j_h,\dualj{}(0))},$ the following estimate holds for the projection error
\begin{equation}
    \left|\Theta_M\right|\leq Ch^{2\dgdeg+2}\sqrt{\quadf{\Nang}{\norma{\dualj{j}(0)}_{\dgorder,\BX}^2}},
\end{equation}
where $C$ depends on $\left(\quadf{\Nang}{\norma{\psij{j}_0}_{\dgorder,\BX}^2}\right)^{1/2}$.
\end{lemma}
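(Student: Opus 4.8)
The plan is to exploit the defining orthogonality of the $L^2$-projection error against the discrete space $\Vh{\dgdeg}$, so that the single inner product appearing in $\Theta_M$ is converted into a product of two projection errors, each vanishing to order $\dgorder$; two applications of Cauchy--Schwarz, one elementwise in space and one in the weighted angular sum, then deliver the rate $2\dgdeg+2$.

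First I would note that, since $\eta_h^j=\Pi^\dgdeg\psij{j}-\psij{j}$ is the error of the $L^2$-projection, it is orthogonal to every member of $\Vh{\dgdeg}$; in particular $(\eta_h^j,\Pi^\dgdeg\dualj{j}(0))=0$ for each $j$. Subtracting this vanishing term is the one essential idea of the proof: it permits replacing $\dualj{j}(0)$ by its own projection error,
\[
(\eta_h^j,\dualj{j}(0)) = \bigl(\eta_h^j,\; \dualj{j}(0)-\Pi^\dgdeg\dualj{j}(0)\bigr),
\]
so that both factors inside the inner product now vanish to order $\dgorder$, which is exactly what doubles the convergence rate.

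Next I would apply Cauchy--Schwarz elementwise (summing over $\elmt{}\in\Th$) to obtain $|(\eta_h^j,\dualj{j}(0))|\le \norma{\eta_h^j}_{L^2(\BX)}\,\norma{\dualj{j}(0)-\Pi^\dgdeg\dualj{j}(0)}_{L^2(\BX)}$, and bound each factor with the approximation property of Lemma~\ref{lem:approximation_properties}. Here I use that $\psij{j}(0)=\psij{j}_0\in H^{\dgorder}(\BX)$ (guaranteed by the hypothesis $\psij{j}\in C([0,T];H^{\dgdeg+2}(X))$) and that the dual data $\dualj{j}(0)\in H^{\dgorder}(\BX)$, which yields the elementwise bound $Ch^{2\dgdeg+2}\norma{\psij{j}_0}_{\dgorder,\BX}\,\norma{\dualj{j}(0)}_{\dgorder,\BX}$.

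Finally I would multiply by $\omega_j$, sum over $j$, and close with a discrete weighted Cauchy--Schwarz in the angular index to split the product of norms into two weighted root-mean-square factors, absorbing $\left(\quadf{\Nang}{\norma{\psij{j}_0}_{\dgorder,\BX}^2}\right)^{1/2}$ into the constant $C$ and leaving $\sqrt{\quadf{\Nang}{\norma{\dualj{j}(0)}_{\dgorder,\BX}^2}}$, exactly as claimed and consistent with the weights in the negative-order norm \eqref{eqn:negative-order-norm}. There is no genuine analytical obstacle; this is the standard initial-projection estimate (cf.\ \cite{CLSS2}). The only care needed is ensuring the dual data $\dualj{j}(0)$ is regular enough to apply Lemma~\ref{lem:approximation_properties} --- which follows from the smooth final-time condition $\dualj{j}(x,T)\in C^\infty$ propagated backward by Lemma~\ref{lem:regularity} --- and keeping the angular weights aligned so that the final Cauchy--Schwarz lands precisely on the stated right-hand side.
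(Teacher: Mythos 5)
Your proof is correct and is exactly the argument the paper relies on: the paper defers these three lemmas to the structure of Cockburn--Luskin--Shu--S\"uli, whose projection estimate is precisely your scheme --- use $L^2$-projection orthogonality to replace $\dualj{j}(0)$ by $\dualj{j}(0)-\Pi^\dgdeg\dualj{j}(0)$, apply Cauchy--Schwarz in space with Lemma~\ref{lem:approximation_properties} on both factors to get $h^{2\dgdeg+2}$, then a weighted Cauchy--Schwarz over the angular index. No gaps; your regularity bookkeeping for $\psij{j}_0$ and $\dualj{j}(0)$ is also the right justification.
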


\begin{lemma}[Residual estimate]\label{lem:residual} Let $\chi^j=\Pi^\dgdeg\dualj{j}$  for each $1\leq j\leq {\Nang}$. Assume that the same assumptions hold as in Theorem \ref{thm:negative_order_estimate}, then, defining  
\[
\Theta_N=-\int_0^T\quadf{\Nang}{\left[\left((\psijh{j})_t,\dualj{j}-\chi^j\right)(s)+B_h\left((\psijh{j},\dualj{j}-\chi^j;\angj{j})\right)(s)-\sigma\left(\barfh{\psij{}}-\psijh{j},\dualj{j}-\chi^j\right)(s)\right]}\,ds,\]
where $\chi^j$ in $V_h$, we have the following: 
\begin{equation}
    |\Theta_N|\leq C\dx^{2\dgdeg+1/2}\left[\int_0^T\quadf{\Nang}{\norma{\dualj{j}}_{\dgorder,\BX}^2}\,ds\right]^{1/2}
\end{equation}
where $C$ depends on the upper bounds of $\norma{\psijh{j}}_{\dgdeg+2,\BX}$.
\end{lemma}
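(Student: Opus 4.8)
The plan is to read $\Theta_N$ as the time-integrated residual of the DG scheme \eqref{eqn:upwind_md} applied to the numerical solution $\psijh{j}$ and tested against the dual projection error $\dualj{j}-\chi^j=\dualj{j}-\Pi^\dgdeg\dualj{j}$, and then to trade it for a quantity involving the discretization error $\ehj{j}$. The crucial first step is \emph{consistency}: since the exact solution $\psij{j}$ is smooth it has no interelement jumps, so the upwind flux in \eqref{eqn:upwind_flux} collapses to the single-valued physical flux and $\psij{j}$ satisfies the weak form \eqref{eqn:upwind_md} against \emph{every} test function, i.e.\ its residual vanishes identically (the scattering source being computed with the same $S_N$ quadrature). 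Subtracting this zero residual and writing $\psijh{j}=\psij{j}-\ehj{j}$, linearity of the forms gives
\[
\Theta_N=\int_0^T\sum_{j=1}^{\Nang}\omj\Big[\big((\ehj{j})_t,\dualj{j}-\chi^j\big)+B_h(\ehj{j},\dualj{j}-\chi^j;\angj{j})-\sigma\big(\overline{e}_h-\ehj{j},\,\dualj{j}-\chi^j\big)\Big]\,ds,
\]
with $\overline{e}_h=\tfrac{1}{m(\Sang)}\sum_{i=1}^{\Nang}\omega_i\,\ehj{i}$. The three summands are then estimated separately.

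For the temporal and scattering terms I would exploit the $L^2$-orthogonality $(\dualj{j}-\chi^j)\perp\Vh{\dgdeg}$. Splitting $\ehj{j}=\xi^j_h-\eta^j_h$ with $\xi^j_h\in\Vh{\dgdeg}$, the component $(\xi^j_h)_t$ again lies in $\Vh{\dgdeg}$ and pairs to zero, so $((\ehj{j})_t,\dualj{j}-\chi^j)=-((\eta^j_h)_t,\dualj{j}-\chi^j)$; since the projection commutes with $\partial_t$, $(\eta^j_h)_t$ is the projection error of $\partial_t\psij{j}$, and Lemma \ref{lem:approximation_properties} bounds this pairing of two projection errors by $Ch^{2\dgorder}\norma{\partial_t\psij{j}}_{\dgorder,\BX}\norma{\dualj{j}}_{\dgorder,\BX}$, the $H^{\dgorder}$-norm of $\partial_t\psij{j}$ being controlled through the PDE by the assumed $H^{\dgdeg+2}$-regularity. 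The same orthogonality, applied to both $\ehj{j}$ and the weighted average $\overline{e}_h$ (whose $\Vh{\dgdeg}$-component drops out), makes the scattering term $O(h^{2\dgorder})$. Both contributions are therefore of strictly higher order than claimed.

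The governing contribution is the bilinear form $B_h(\ehj{j},\dualj{j}-\chi^j;\angj{j})$, and its volume part $-(\ehj{j},\angj{j}\cdot\nabla(\dualj{j}-\chi^j))$ resists the orthogonality trick because it pairs the error against the \emph{gradient} of a projection error. Here I would resort to Cauchy--Schwarz: $\norma{\ehj{j}}_{L^2(\BX)}\leq Ch^{\dgdeg+1/2}$ (the $L^2$ convergence rate of the upwind DG scheme, cf.\ Theorem \ref{thm:dg_error_time_dep}), while Lemma \ref{lem:approximation_properties} gives $\norma{\nabla(\dualj{j}-\chi^j)}_{L^2(\BX)}\leq Ch^{\dgdeg}\norma{\dualj{j}}_{\dgorder,\BX}$, so their product is $O(h^{2\dgdeg+1/2})$. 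The edge/flux part is handled the same way, using the inverse inequality (Lemma \ref{lem:inverse_inequality}) on the $\Vh{\dgdeg}$-component of $\ehj{j}$ and the $O(h^{\dgdeg+1/2})$ trace bound on $\dualj{j}-\chi^j$, again giving $O(h^{2\dgdeg+1/2})$. This volume term is the main obstacle, and it is exactly the source of the half-order gap between the proven $2\dgdeg+1/2$ and the numerically observed $2\dgorder$: binding the \emph{full} error---which is superconvergent only in the negative norm, not in $L^2$---to a derivative of the dual forfeits the half power that a sharper, jump-based reformulation of the residual (as in \cite{cockburn2003enhanced}) would retain.

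To assemble, I would sum the term-wise bounds against the weights $\omj$, apply Cauchy--Schwarz in the angular index to factor out $(\sum_j\omj\norma{\dualj{j}}_{\dgorder,\BX}^2)^{1/2}$, and use the uniform-in-time bound $(\sum_j\omj\norma{\ehj{j}(s)}_{L^2(\BX)}^2)^{1/2}\leq Ch^{\dgdeg+1/2}$ of Theorem \ref{thm:dg_error_time_dep} to remove the error from the time integral; a final Cauchy--Schwarz in $s$ over $[0,T]$ then yields $|\Theta_N|\leq Ch^{2\dgdeg+1/2}(\int_0^T\sum_j\omj\norma{\dualj{j}}_{\dgorder,\BX}^2\,ds)^{1/2}$, as claimed. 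For constant $\sigma$ and periodic boundary data the divided differences commute with the scheme and satisfy the same orthogonality and error bounds, so the identical argument applied to $\partial_h^\alpha\ehj{j}$ gives the divided-difference version needed for the SIAC post-processing.
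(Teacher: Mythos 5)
Your proposal is correct and lands exactly on the claimed $h^{2\dgdeg+1/2}$ bound. Note that the paper does not actually write out a proof of this lemma; it only states that the three estimates ``follow the same structure as in'' Cockburn--Luskin--Shu--S\"uli, so the fair comparison is with that template. Your skeleton coincides with it: use consistency of the exact $S_N$ solution against piecewise-smooth (discontinuous) test functions to replace $\psijh{j}$ by the error $\ehj{j}$; split $\ehj{j}=\xi^j_h-\eta^j_h$; kill the time-derivative and scattering pairings via $L^2$-orthogonality of $\dualj{j}-\chi^j$ against $\Vh{\dgdeg}$, leaving products of two projection errors of size $O(h^{2\dgdeg+2})$ (with $\norma{\partial_t\psij{j}}_{\dgorder,\BX}$ controlled through the PDE by the assumed $H^{\dgdeg+2}$ regularity); and isolate $B_h(\ehj{j},\dualj{j}-\chi^j;\angj{j})$ as the dominant contribution. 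Where you genuinely deviate is in that dominant term: you bound its volume part by raw Cauchy--Schwarz, $\norma{\ehj{j}}\,\norma{\nabla(\dualj{j}-\chi^j)}\lesssim h^{\dgdeg+1/2}\cdot h^{\dgdeg}$, whereas the CLSS-style route integrates by parts, uses $\angj{j}\cdot\nabla\xi^j_h\in\Vh{\dgdeg}$ (true for $\Qk{\dgdeg}$ spaces) to annihilate the volume integral by orthogonality, and is left with pure jump terms $\dgjump{\xi^j_h}{}$ paired against traces of $\dualj{j}-\chi^j$; those jumps are controlled in \emph{time-integrated} form by the dissipation in the energy identity, $\int_0^T\int_{\Eh}|\angj{j}\cdot\normal|\,\dgjump{\xi^j_h}{}^2\,d\mathbf{s}\,ds\lesssim h^{2\dgdeg+1}$, which yields $O(h^{2\dgdeg+1})$ for this term rather than your $O(h^{2\dgdeg+1/2})$. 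So your treatment is cruder but entirely sufficient for the stated bound, and your closing diagnosis---that pairing the full error against $\nabla(\dualj{j}-\chi^j)$ is what forfeits the extra half power that a jump-based reformulation retains---is accurate.

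One caveat you should make explicit: your main term hinges on the uniform-in-time bound $\norma{\ehj{j}(s)}_{L^2(\BX)}\le C h^{\dgdeg+1/2}$. Theorem \ref{thm:dg_error_time_dep} as printed asserts the rate $C\dx^{\dgorder/2}$, i.e.\ $h^{(\dgdeg+1)/2}$; with that literal exponent your volume-term estimate degenerates to $h^{\dgdeg+(\dgdeg+1)/2}$, which falls short of $2\dgdeg+1/2$ for $\dgdeg\ge 1$. The appendix energy argument in fact delivers the standard $h^{\dgdeg+1/2}$ (the printed exponent is a typo), so your reading is the right one, but the dependence should be flagged rather than silently corrected.
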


\begin{lemma}[Consistency estimate]\label{lem:consistency} Assume that the same assumptions hold as in Theorem \ref{thm:negative_order_estimate} and define 
\[
\Theta_C=-\int_0^T\quadf {\Nang}{\left[\left(\psijh{j},\dualj{j}_t\right)-B_h\left(\psijh{j},\dualj{j};\angj{j}\right)+\sigma\left(\barfh{\psij{}}-\psijh{j},\dualj{j}\right)\right]}\, ds.
\]  One can show that $\Theta_C$ satisfies
    \begin{equation}
        \Theta_C=0.
    \end{equation}
\end{lemma}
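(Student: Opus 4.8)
The plan is to prove the stronger pointwise-in-time statement: for each $s\in[0,T]$ the quantity inside the time integral of $\Theta_C$ vanishes, after which $\Theta_C=0$ follows immediately. The decisive feature distinguishing $\Theta_C$ from the residual term $\Theta_N$ is that it is built from the \emph{continuous} dual solution $\dualj{j}$ rather than its projection $\chi^j$; this lets me substitute the dual equation \eqref{eq:dual_problem_angle_discretized} directly into the factor $(\psijh{j},\dualj{j}_t)$. Writing the spatial DG form as $B_h(w,\tau;\angj{j})=-(w,\angj{j}\cdot\nabla_x\tau)_{\elmt{}}+(\flux{\angj{j}w}{},\tau)_{\dK}$ summed over elements, I would first replace $\dualj{j}_t=-\angj{j}\cdot\nabla_x\dualj{j}-\sigma(\bardual-\dualj{j})$. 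This produces a volume advection contribution $-(\psijh{j},\angj{j}\cdot\nabla_x\dualj{j})$ that exactly cancels the matching volume term coming from $-B_h(\psijh{j},\dualj{j};\angj{j})$, leaving only scattering and edge-flux contributions. This is precisely the adjoint-consistency mechanism of the upwind DG scheme, mirroring the structure in \cite{CLSS2} but now carrying the collision operator through.

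For the scattering contributions, after the matching $\pm\sigma(\psijh{j},\dualj{j})$ pieces cancel, I am left with $\sigma\sum_j\omj\left[-(\psijh{j},\bardual)+(\barfh{\psij{}},\dualj{j})\right]$. Here I would invoke the definitions of the two discrete averages, namely $\sum_j\omj\psijh{j}=m(\Sang)\,\barfh{\psij{}}$ and $\sum_j\omj\dualj{j}=m(\Sang)\,\bardual$. Both remaining sums then collapse to the same scalar $m(\Sang)\,(\barfh{\psij{}},\bardual)$, so the scattering part vanishes. This is the step that uses the self-adjoint (symmetric) structure of the discretized scattering operator under the weighted angular sum.

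The remaining and most delicate piece is the edge term $-\sum_j\omj(\flux{\angj{j}\psijh{j}}{},\dualj{j})_{\dK}$ summed over all elements. I would exploit two facts: the upwind numerical flux $\flux{\angj{j}\psijh{j}}{}$ is single-valued on each edge, and $\dualj{j}$ is continuous (indeed smooth) across edges, so its jump vanishes and its two one-sided traces agree. Summing the element-wise contributions, each interior edge is visited twice with opposite outward normals $\normal_{\dK}$; since both the flux and $\dualj{j}$ are common to the two visits, the contributions cancel in pairs, and the periodic (or zero inflow) boundary condition assumed in Theorem \ref{thm:negative_order_estimate} ensures the domain-boundary edges either pair up identically or contribute nothing. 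I expect this edge accounting—keeping the normal orientation consistent and confirming that the boundary terms drop under the stated boundary conditions—to be the main technical obstacle, while the volume and scattering cancellations are purely algebraic. Once all three groups of terms are shown to vanish, the integrand is identically zero and $\Theta_C=0$.
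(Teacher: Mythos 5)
Your proposal is correct and follows exactly the argument the paper intends (the paper only states the lemma, deferring to the structure of Cockburn--Luskin--Shu--S\"uli with the collision operator carried along): substitute the dual equation, use smoothness of $\dualj{j}$ so that the single-valued upwind flux terms telescope to zero across interior edges under periodic boundary conditions, and cancel the remaining scattering terms via the identity $\sum_j\omj(\psijh{j},\bardual)=m(\Sang)(\barfh{\psi_h},\bardual)=\sum_j\omj(\barfh{\psi_h},\dualj{j})$, which indeed requires the weighted angular sum rather than holding pointwise in $j$. All three cancellations are verified correctly, so the integrand vanishes for each $s$ and $\Theta_C=0$.
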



\section{Superconvergence analysis \label{sec:proof_outline}}
In this section, we outline main steps in the proofs of our superconvergence results. 

\subsection{Analysis of superconvergence for the one-dimensional steady-state problem\label{sec:steady_state_proof}}

Here, we sketch key steps to prove Theorem \ref{thm:steady_state}. Throughout this subsection, we restrict the analysis of the DG solution $\psijh{j}$ and its error function to the $\Kindx-$th element,  $\elmt{\Kindx} = [x_{\Kindx-1},x_{\Kindx}].$ The error on $\elmt{\Kindx}$ is denoted as $\ehj{j}\bigg|_{\elmt{\Kindx}} = \ejn{j}{\Kindx},\, j=1,\dots,{\NangO}$ and $e_{\elmt{}}=(\ejn{1}{\Kindx},\ejn{2}{\Kindx},\dots,\ejn{{\NangO}}{\Kindx})$.

We use the same framework for the superconvergence analysis of the linear advection equation as used by Adjerid et al. \cite{adjerid2002posteriori} and match the coefficients of the polynomial expansion of the error equation. A key step in \cite{adjerid2002posteriori} is to utilize the theory for the Runge-Kutta method by viewing the one-dimensional linear advection equation as an ODE. However, unlike the one-direction linear advection equation, the RTE equation involves both left- and right-going particles coupled through an integral term. As a result, the ODE argument in \cite{adjerid2002posteriori} is not applicable for kinetic equations. To bypass this difficulty, we utilize element-by-element mathematical induction.  

Before proceeding with the analysis, we note that the exact solution on element $\elmt{\Kindx}$ can be expressed as a Taylor series centered around the element center, $\bxn{\Kindx}$: 
\begin{equation}
\label{eq:taylorpsi}
    \psij{j}(x,\angx{j})\bigg|_{\elmt{\Kindx}}  = \sum_{m=0}^\infty\, \underbrace{\frac{1}{m!}\dpart{x}{m}{\psij{j}}(\bxn{\Kindx},\angx{j})}_{\apj{j}{\Kindx}{m}}\left(\frac{\dx}{2}\right)^m\map^m,
\end{equation}
where $\map$ is the local coordinate mapping.
Noting the relation between monomials, $\map^m$, and Legendre polynomials, 
\[\map^m = \sum_{s=0}^{\lfloor\frac{m}{2}\rfloor}\, \beta_{m-2s,s}\leg{m-2s}(\map) :=P_m(\map),
\] 
where
\[
\beta_{q,s} = (2q-4s+1)\frac{(2\lfloor\frac{q+2s}{2}\rfloor)!!}{(2s)!!}\frac{(2\lfloor\frac{q+2s+1}{2}\rfloor-1)!!}{(2(q+s)+1)!!},\qquad m \mathbb{N}_+.
\]
This can be written as 
\begin{equation}
\label{eq:taylorP}
    \psij{j}(x,\angx{j})\bigg|_{\elmt{\Kindx}}  = \sum_{m=0}^\infty\, \apj{j}{\Kindx}{m}P_{m}(\map)\left(\frac{\dx}{2}\right)^m.
\end{equation}
This relation can also be seen by using an $L^2-$projection of $\psij{j}(x,\angx{j})$ onto the Legendre polynomials:
\begin{equation}
\label{eq:Legendrepsi}
    \psij{j}(x,\angx{j})\bigg|_{\elmt{\Kindx}} = \sum_{q=0}^\infty\, c_{\Kindx,q}^j L_q(\map_\Kindx),\qquad c_{\Kindx,q}^j = \sum_{s=0}^{\infty}\, 
\apj{j}{\Kindx}{q+2s}\left(\frac{\dx}{2}\right)^{q+2s}\beta_{q,s}.
\end{equation}  

\noindent\textbf{Derivation of the error equation on the reference element.} Denote the error on element $\Kindx$ as
\[
\ehj{j} = \psij{j} - \psijh{j},\quad j = 1,\dots,{\NangO}.
\]
Then, the error equation on the reference element is given by
\begin{align}
\label{eq:1ddg}
    0 =  & -(\angx{j}\ejn{j}{\Kindx},\tau')_{\I} +  \flux{\angx{j}\ehj{j}}{\Kindx}\tau(1) -\flux{\angx{j}\ehj{j}}{\Kindx-1}\tau(-1) + \frac{\dx}{2}\left((\sigt\ejn{j}{\Kindx},\tau)_{\I} -(\sigs\barf{\ejn{}{\Kindx}},\tau)_{\I}\right) . 
\end{align}
Using an upwind flux, the error equation then becomes
\begin{align}
\label{eq:1ddg}
    0 =  & -(\angx{j}\ejn{j}{\Kindx},\tau')_{\I} + \frac{\dx}{2}\left((\sigt\ejn{j}{\Kindx},\tau)_{\I} -(\sigs\barf{\ejn{}{\Kindx}},\tau)_{\I}\right) +
    \begin{cases}
    \angx{j}\ejn{j}{\Kindx}(1)\tau(1)-\angx{j}\ejn{j}{{\Kindx-1}}(1)\tau(-1)  \qquad &\angx{j}>0\\
    \angx{j}\ejn{j}{\Kindx+1}(-1)\tau(1)-\angx{j}\ejn{j}{\Kindx}(-1)\tau(-1)  \qquad &\angx{j}<0
    \end{cases} . \notag
\end{align}

\vspace{0.5cm}

\noindent\textbf{Polynomial expansion of the error function on the reference element as a series of $\Delta x$.}
Using assumptions in Theorem \ref{thm:steady_state}, and the Taylor series expansion \cref{eq:taylorpsi}, the error in terms of powers of $\dx$ is
\begin{equation}
\label{eq:TaylorErr}
    \ehj{j}(x,\angx{j})\bigg|_{\elmt{\Kindx}} = \sum_{m=\dgorder}^\infty\, \apj{j}{\Kindx}{m}\Qpm{}{m}(\map)\left(\frac{\dx}{2}\right)^m,
\end{equation}
where $\Qpm{}{m}(\map)$ is a polynomial of degree $m.$  We will show that it is defined as
\begin{equation}
    \label{eq:Qpm}
    \Qpm{}{m}(\map) = \begin{cases}
        \widetilde{\beta}_{\dgorder}^{\pm}\radpm{\dgorder}(\map),\qquad m=\dgorder\\
        \sum_{s=0}^{\lceil\frac{m-k}{2}\rceil-1}\, \beta_{m-2s,s}\leg{m-2s}(\map),\qquad m\geq\dgdeg+2
    \end{cases}.
\end{equation}
Notice that $\Qpm{}{m}(\map)$ is written in terms of the local coordinates and does not depend on the element itself. Further, note that naively applying the expansion in \cref{eq:taylorP} would lead to 
\[
\ejn{j}{n,\dgorder} = \apj{j}{\Kindx}{m}P_m(\map)\left(\frac{\dx}{2}\right)^m.
\]
We will see that this is not the case.

Grouping the error equation in terms of powers of $\dx$ leads to the system:
\begin{equation}
\label{eq:dxEqn}
\begin{split}
    0 =& -(\angx{j}\ejn{j}{\Kindx,\dgorder},\tau')_{\I}
    + \flux{\angx{j}\ehj{j}}{\Kindx}\tau(1) +\flux{\angx{j}\ehj{j}}{\Kindx-1}\tau(-1)  + \begin{cases}
  0 &\qquad m = \dgorder\\ 
   (\sigt\ejn{j}{\Kindx,m-1},\tau)_{\I} - (\sigs\barf{\ejn{}{n,m-1}},\tau)_{\I} &\qquad m\geq \dgdeg+2 \notag
\end{cases},
\end{split}
\end{equation}
for $\tau \in \Pk{\dgdeg}$. 
In terms of the coefficients, this becomes:
\begin{align}
\label{eq:ErrEq}
0 = &-\angx{j}\apj{j}{\Kindx}{\dgorder}(\Qpm{}{\dgorder},\tau')_{\I}+\\
&\qquad \qquad +\angx{j}\begin{cases}
    \left(\apj{j}{\Kindx}{\dgorder}\tau(1)-\apj{j}{\Kindx-1}{\dgorder}\tau(-1)\right)\Qpm{}{\dgorder}(1),\quad &\angx{j}>0\\
    \left(\apj{j}{\Kindx+1}{\dgorder}\tau(1)-\apj{j}{\Kindx}{\dgorder}\tau(-1)\right)\Qpm{}{\dgorder}(-1),\quad &\angx{j}<0
\end{cases},\qquad m=\dgorder\\
0 = &-\angx{j}\apj{j}{\Kindx}{m}(\Qpm{}{m},\tau')_{\I} + (\Qpm{}{m-1},\tau)_{\I}\left(\sigt\apj{j}{\Kindx}{m-1}-\sigs\overline{\apj{}{\Kindx}{m-1}}\right)\\
&\qquad \qquad +\angx{j}\begin{cases}
    \left(\apj{j}{\Kindx}{m}\tau(1)-\apj{j}{\Kindx-1}{m}\tau(-1)\right)\Qpm{}{m}(1),\quad &\angx{j}>0\\
    \left(\apj{j}{\Kindx+1}{m}\tau(1)-\apj{j}{\Kindx}{m}\tau(-1)\right)\Qpm{}{m}(-1),\quad &\angx{j}<0
\end{cases},\qquad m\geq\dgdeg+2.\notag
\end{align}
for all $\tau \in \Pk{\dgdeg}$.  Note that for the $S_N$ system
\[
\overline{\apj{}{\Kindx}{m}} =\quadf{\NangO}{\apj{j}{\Kindx}{m}}.
\]

\noindent\textbf{Proof that $\Qpm{}{\dgorder}(\map)=\widetilde{\beta_{\dgorder}}\radpm{\dgorder}(\map)$.}
To prove that the leading order error term is a Radau polynomial, we set $\tau(\map) = \leg{q}(\map),\, q=0,\dots,p$ in \cref{eq:ErrEq}.  Then, the leading order error is:
\begin{align}
\label{eq:leadorderErr}
0 = &-\angx{j}\apj{j}{\Kindx}{\dgorder}(\Qpm{}{\dgorder}(\map),\leg{q}'(\map))_{\I} +\angx{j}\begin{cases}
    \left(\apj{j}{\Kindx}{\dgorder}-(-1)^q\apj{j}{\Kindx-1}{\dgorder}\right)\Qpm{}{\dgorder}(1),\quad &\angx{j}>0\\
    \left(\apj{j}{\Kindx+1}{\dgorder}-(-1)^q\apj{j}{\Kindx}{\dgorder}\right)\Qpm{}{\dgorder}(-1),\quad &\angx{j}<0
\end{cases}
\end{align}
For $q=0$ this simplifies to
\begin{align}
\label{eq:qzero}
0 =
&\angx{j}\begin{cases}
    \left(\apj{j}{\Kindx}{\dgorder}-\apj{j}{\Kindx-1}{\dgorder}\right)\Qpm{}{\dgorder}(1),\quad &\angx{j}>0\\
    \left(\apj{j}{\Kindx+1}{\dgorder}-\apj{j}{\Kindx}{\dgorder}\right)\Qpm{}{\dgorder}(-1),\quad &\angx{j}<0
\end{cases}
\end{align}

This implies that, for $\angx{j}>0$, either $\apj{j}{\Kindx}{\dgorder}-\apj{j}{\Kindx-1}{\dgorder} = 0,\, \Kindx = 1,\dots,\Nx,\, $ or $\Qpm{}{k+1}(1)=0$ and for $\angx{j}<0$ either $\apj{j}{\Kindx+1}{\dgorder}-\apj{j}{\Kindx}{\dgorder} = 0,\, \Kindx = 1,\dots,\Nx,\, $ or $\Qpm{}{k+1}(-1)=0$. If the difference in the coefficients is zero, one can prove by induction that $\apj{j}{\Kindx}{m}=0$ for all $m,\, \Kindx$ since we assume the inflow boundary is exact. This implies that the error is identically zero, which can only occur if our exact solution is a polynomial in the approximation space. 

Next, consider $q = 1,\dots,k:$
\[
0 = -\angx{j}\apj{j}{\Kindx}{\dgorder}(\Qpm{}{\dgorder}(\map),\leg{q}'(\map))_{\I}.
\]
Noting that 
\[
(\Qpm{}{\dgorder}(\map),\leg{q}'(\map))_{\I} = \sum_{s=0}^{\lfloor\frac{q-1}{2}\rfloor}\, (2(q-2s)-1)(\Qpm{}{\dgorder}(\map),\leg{q-(2s+1)}(\map))_{\I},\quad q = 1, ..., k.
\]
leads to the conclusion that the quantity on the right is always zero as $q-(2s+1) \leq k-1.$ Hence $\Qpm{}{\dgorder}(\map)$ is orthogonal to polynomials of degree $\leq \dgdeg-1$ and, combining the two results above, leads to  
\[
\Qpm{}{\dgorder}(\map) = \begin{cases}
        \widetilde{\beta}_{\dgorder}^{-}\radm{\dgorder}(\map),\quad & \angx{j}>0\\
        \widetilde{\beta}_{\dgorder}^{+}\radp{\dgorder}(\map),\quad & \angx{j}<0
\end{cases}.
\]
Combining (i) the local Taylor series expansions for both the exact solution and the DG approximation; (ii) the relation between the monomials and the Legendre polynomials, \cref{eq:taylorP}; and (iii) Galerkin orthogonality leads to the expression, for $m\geq\dgdeg+2$,
\begin{align*}
    \Qpm{}{m}(\map) = & \left(\sum_{s=0}^{\lfloor\frac{m}{2}\rfloor}\, \beta_{m-2s,s}\leg{m-2s}(\map)\right) - \left(\sum_{s=\lceil\frac{m-k}{2}\rceil}^{\lfloor\frac{m}{2}\rfloor}\, \beta_{m-2s,s}\leg{m-2s}(\map)\right),\\
    &\qquad \qquad =\left(\sum_{s=0}^{\lceil\frac{m-k}{2}\rceil-1}\, \beta_{m-2s,s}\leg{m-2s}(\map)\right)\qquad m=\dgdeg+2,\dots,2\dgdeg+1.
\end{align*}
Thus proving the second part of our theorem.

\noindent\textbf{Orthogonality of $\Qpm{}{m}$ for $m \geq \dgdeg+2$.} Consider the case where $m \geq \dgdeg+2$ and $\tau(\map) = \leg{q}(\map),\, q = 0,\dots,k$.  In this case, the error equation is given by
\begin{align*}
0 = -\angx{j}\apj{j}{\Kindx}{m}(\Qpm{}{m},\leg{q}'(\map))_{\I} + &(\Qpm{}{m-1},\leg{q}(\map))_{\I}\left(\sigt\apj{j}{\Kindx}{m-1}-\sigs\overline{\apj{}{\Kindx}{m-1}}\right)\\
&\qquad \qquad +\angx{j}\begin{cases}
    \left(\apj{j}{\Kindx}{m}-(-1)^q\apj{j}{\Kindx-1}{m}\right)\Qpm{}{m}(1),\quad &\angx{j}>0\\
    \left(\apj{j}{\Kindx+1}{m}-(-1)^q\apj{j}{\Kindx}{m}\right)\Qpm{}{m}(-1),\quad &\angx{j}<0
\end{cases}. 
\end{align*}

Rearranging the to obtain a relation for the $Q_m(1)$ gives
\begin{align*}
\angx{j}\begin{cases}
    \left(\apj{j}{\Kindx}{m}-(-1)^q\apj{j}{\Kindx-1}{m}\right)\Qpm{}{m}(1),\quad &\angx{j}>0\\
    \left(\apj{j}{\Kindx+1}{m}-(-1)^q\apj{j}{\Kindx}{m}\right)\Qpm{}{m}(-1),\quad &\angx{j}<0
\end{cases}=\angx{j}\apj{j}{\Kindx}{m}(\Qpm{}{m},\leg{q}'(\map))_{\I} - &(\Qpm{}{m-1},\leg{q}(\map))_{\I}\left(\sigt\apj{j}{\Kindx}{m-1}-\sigs\overline{\apj{}{\Kindx}{m-1}}\right).
\end{align*}
By Galerin orthogonality, the right side is zero for $q=0,1,\dots,\dgdeg$.  Hence the first non-vanishing term at $\map=1$ is $Q_{2(\dgorder)}$ and our theorem \cref{thm:steady_state} is proven.

\subsection{Analysis of superconvergence for time dependent case}

In this section, we outline the proof of Theorem \cref{thm:negative_order_estimate}:
\begin{proof}
    Let $\Phi\in (C_0^{\infty}(\BX))^N$, Then by \eqref{eqn:duality} and the dual problem definition, 
    \begin{align*}
        \quadf{\Nang}{(\ehj{j}(T),\phi^j)}&=\quadf{\Nang}{\left(\ehj{j}(T),\dualj{j}(T)\right)}\\
                                             &=\quadf{\Nang}{\left[\left(\psi^j(T),\dualj{j}(T)\right)-\left(\psijh{j}(T),\dualj{j}(T)\right)\right]}\\
                                             &=\quadf{\Nang}{\left[\left(\psi^j(0),\dualj{j}(0)\right)-\left(\psijh{j}(0),\dualj{j}(0)\right)-\int_0^T\, \frac{d}{dt}\left(\psijh{j},\dualj{j}\right)\,ds\right]}\\
                                             &=\quadf{\Nang}{\left[\left(\psi^j(0)-\psijh{j}(0),\dualj{j}(0)\right)-\int_0^T\, \left(\left((\psijh{j})_t,\dualj{j}\right)-\left(\psijh{j},(\dualj{j})_t\right)\right)\, ds\right]},                
    \end{align*}
    Notice that for any $\chi^j$ in $V_h$, 
    \begin{align*}
        \int_0^T\quadf{\Nang}{\left((\psijh{j})_t,\dualj{j}\right)}\, ds &= \int_0^T\quadf{\Nang}{\left[\left((\psijh{j})_t,\dualj{j}-\chi^j\right)+\left((\psijh{j})_t,\chi^j\right)\right]}\,ds\\
        &=\int_0^T\quadf{\Nang}{\left[((\psijh{j})_t,\dualj{j}-\chi^j)-B_h(\psijh{j},\chi^j;\omj)+\sigma(\barfh{\psij{}}-\psijh{j},\chi^j)\right]}\,ds\\
        &=\int_0^T\quadf{\Nang}{\left[(\psijh{j})_t,\dualj{j}-\chi^j)\,d\tau-B_h(\psijh{j},\chi^j;\omj)+(\barfh{\psij{}}-\psijh{j},\chi^j)\right]}\, ds\\
        &=\int_0^T\quadf{\Nang}{\left[((\psijh{j})_t,\dualj{j}-\chi^j)+B_h(\psijh{j},(\dualj{j}-\chi^j);\omj)-\sigma(\barfh{\psij{}}-\psijh{j},(\dualj{j}-\chi^j))\right]}\, ds\\
        &+\int_{0}^T\quadf{\Nang}{\left[-B_h(\psijh{j},\dualj{j};\omj)+(\barf{\psijh{}}-\psijh{j},\dualj{j})\right]}\, ds
    \end{align*}
    This allows for the error to be written as
    \begin{equation}
        \quadf{\Nang}{(\ehj{j}(T),\phi^j)}=\Theta_M+\Theta_N+\Theta_C
    \end{equation}
    where
    \begin{align*}
        \Theta_M &= -\quadf{\Nang}{(\eta^j_h,\varphi(0))},\\
        \Theta_N &= -\int_0^T\, \quadf{\Nang}{\left[\left((\psijh{j})_t,\dualj{j}-\chi^j\right)+B_h\left(\psijh{j},\dualj{j}-\chi^j;\omj\right)-\sigma\left(\barfh{\psij{}}-\psijh{j},\dualj{j}-\chi^j\right)\right]}\,ds\\
        \Theta_C &= -\int_0^T\, \quadf{\Nang}{\left[\left(\psijh{j},\dualj{j}_t\right)-B_h\left(\psijh{j},\dualj{j};\omj\right)+\sigma\left(\barfh{\psij{}}-\psijh{j},\dualj{j})\right)\right]}\,ds
    \end{align*}
    $\Theta_M$, $\Theta_N$ and $\Theta_C$ are respectively the \emph{projection, residual and consistency terms}.
    Using Lemmas \ref{lem:project}, \ref{lem:residual}, and \ref{lem:consistency} together with the dual estimate, \cref{eqn:general_dual_problem} gives our desired estimate.
\end{proof}

\section{Extracting superconvergence using the SIAC filter\label{sec:SIAC}}

Now that we have proven that higher-order accuracy exists in the negative-order norm, we show how to extract that information via the Smoothness-Increasing Accuracy-Conserving (SIAC) filter.

To illustrate the ability of SIAC to perform on a given data set, it is useful to outline how SIAC works for general data as well as through the error estimates.  

Given $\psijh{j},\, j = 1,\dots,\Nang,$ superconvergence can be extracted through convolving with a specially designed kernel, $\Kernel{}{H}{\cdot}$: 
\begin{equation}
\label{eq:postp}
(\psijh{j})^*(\bx) =  \Kernel{}{H}{\bx}\star \psijh{j}(\bx) = \int_\mathbb{R} \Kernel{}{H}{\bx-\by}\psijh{j}(\by) d\by.
\end{equation}
where $H$ represents the kernel scaling, in this case the uniform mesh size. 

We show the reliance on the negative-order norm by decomposing the filtered error into a term that only depends on the number of moments and a term that relies on the error in the negative-order norm:
\begin{equation}
\label{eq:errordecomp}
\| \psij{j} - (\psijh{j})^* \| \leq \underbrace{\|\psij{j}  - \Kernel{}{H}{}\star \psij{j}\|}_{\text{Moments}}+\underbrace{\| \Kernel{}{H}{}\star(\psij{j}-\psijh{j}) \|}_{\text{relies on negative-order norm}} \leq \mathcal{O}(H^{r+1}) +  \mathcal{O}(h^{s}) 
\end{equation}
where $\|\cdot\|$ is some norm. Here, the choice of $r$ is the number of moments the filter is designed to capture and $s$ is the order of accuracy of the approximation.  In this article, the ability to bound the $L^2-$norm by the negative-order norm is utilized and $r=2\dgdeg+1.$

The success of the filter relies on the following results. 
\begin{theorem}[Bramble and Schatz \cite{bramble1977higher}] \label{thm:bramble_schatz} Let $k\geq 0$. For $T>0$, let $\Psi=(\psi^n)_{n=1}^{\Nang}$ be the exact solution to problem \eqref{eqn:disc_aver}, satisfying $\psi^j\in C([0,T];H^{k+1}(\BX))$, $1\leq j\leq \Nang$. Let $\Omega_0+2 supp(K_h^{2(k+1),k+1}(\bx))\subset\subset \BX$ and $\Psi_h=(\psi^j_h)_{j=1}^{\Nang}$ then

\begin{align}
    &\left(
      \sum_{j=1}^{\Nang}\omega_j
      \norma{ \psi^j(T) -K_h^{2(k+1),k+1}\star \psi^j_h(T)}_{L^2(\Omega_0)}^2
     \right)^{1/2} \notag \\
    &\quad \leq 
      \frac{h^{2k+2}}{(2k+2)!}
      \left(
        \sum_{j=1}^{\Nang}\omega_j
        |\psi^j|_{2k+2,\BX}^2
      \right)^{1/2}
      + C_{\mathrm{P}}
      \sum_{|\lambda|\leq k+1} 
        \norma{\partial_h^{\lambda}(\Psi-\Psi_h)}_{-(k+1),\BX}.
    \label{eqn:bramble_schatz}
\end{align}
where $C_{\mathrm{P}}$ depends solely on $\Omega_0$, $\BX$, $k$, and it is independent of $h$.
\end{theorem}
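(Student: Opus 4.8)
The plan is to follow the classical Bramble--Schatz framework, built around the error splitting already displayed in \eqref{eq:errordecomp}. For each fixed angular index $j$ I would start from the triangle inequality
\begin{equation*}
\norma{\psij{j}(T)-K_h^{2(\dgorder),\dgorder}\star\psijh{j}(T)}_{L^2(\Omega_0)}
\le
\norma{\psij{j}(T)-K_h^{2(\dgorder),\dgorder}\star\psij{j}(T)}_{L^2(\Omega_0)}
+\norma{K_h^{2(\dgorder),\dgorder}\star\ehj{j}(T)}_{L^2(\Omega_0)},
\end{equation*}
then square, multiply by $\omj$, sum over $j$, and take a square root. In this way the final weighted $\ell^2$ estimate reduces to controlling two families of terms separately: the first is the \emph{moments} (kernel-approximation) term, and the second is the term that must be tied to the weighted negative-order norm.

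For the moments term the key input is the polynomial-reproduction property of the SIAC kernel. By construction $K_h^{2(\dgorder),\dgorder}$ is a linear combination of $2(\dgorder)$ B-splines of order $\dgorder$, normalized so that $\int_{\R}K_h^{2(\dgorder),\dgorder}(y)\,y^m\,dy=\delta_{0m}$ for $0\le m\le 2\dgdeg+1$. Writing $\psij{j}-K_h^{2(\dgorder),\dgorder}\star\psij{j}=\int_{\R}K_h^{2(\dgorder),\dgorder}(\cdot-y)\,[\psij{j}(\cdot)-\psij{j}(y)]\,dy$ via the zeroth moment, and Taylor-expanding $\psij{j}(y)$ about the evaluation point, every monomial of degree $\le 2\dgdeg+1$ is annihilated by the moment conditions, leaving only the $(2\dgdeg+2)$-th order integral remainder. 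This produces precisely the constant $1/(2\dgdeg+2)!$ and the seminorm $|\psij{j}|_{2\dgdeg+2,\BX}$; assembling the $\omj$-weighted sum yields the first term on the right-hand side of \eqref{eqn:bramble_schatz}.

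For the negative-order-norm term I would establish the standard convolution lemma: there is a constant $C_{\mathrm{P}}$ depending only on $\Omega_0$, $\BX$ and $\dgdeg$ with
\begin{equation*}
\norma{K_h^{2(\dgorder),\dgorder}\star\ehj{j}(T)}_{L^2(\Omega_0)}
\le C_{\mathrm{P}}\sum_{|\lambda|\le \dgorder}\norma{\partial_h^{\lambda}\ehj{j}(T)}_{-(\dgorder),\BX}.
\end{equation*}
The mechanism exploits the B-spline structure: using the repeated-differentiation identity for B-splines, the order-$\dgorder$ B-splines composing the kernel can be rewritten as $\dgorder$-th order central differences of a single B-spline of order $2(\dgorder)$. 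Inside the convolution these central differences transfer onto $\ehj{j}$ as $\dgorder$-th order divided differences $\partial_h^{\lambda}$, while the residual convolution kernel, being a B-spline of order $2(\dgorder)$ and hence of class $C^{2\dgdeg}\subset H^{\dgorder}$, is a legitimate test function in the definition \eqref{eqn:negative-order-norm}. The support hypothesis $\Omega_0+2\,\mathrm{supp}(K_h^{2(\dgorder),\dgorder})\subset\subset\BX$ enters here, guaranteeing that the shifted kernels are supported inside $\BX$. Summing over $j$ against the weights $\omj$ and applying Cauchy--Schwarz in the $\omj$-weighted inner product converts the per-component bounds into the system weighted negative-order norm, reproducing the second term of \eqref{eqn:bramble_schatz}.

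The main obstacle is the convolution lemma of the previous paragraph. One must carry out the central-difference/divided-difference trade with care so that the constant $C_{\mathrm{P}}$ is genuinely independent of $h$: all $h$-dependence has to be absorbed into the divided differences $\partial_h^{\lambda}$ and the kernel scaling, not left sitting as negative powers of $h$. In addition, one must verify that threading the weights $\omj$ through the Cauchy--Schwarz step produces exactly the weighted system norm \eqref{eqn:negative-order-norm} rather than an unweighted or purely scalar negative norm. The moments term, by contrast, is a routine Taylor-remainder estimate once the moment conditions of the kernel are invoked.
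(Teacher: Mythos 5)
You should note at the outset that the paper contains no proof of this theorem: it is quoted (in an $\omega_j$-weighted form) from Bramble and Schatz \cite{bramble1977higher}, so the comparison here is against the classical argument that the paper defers to. Your overall architecture --- triangle inequality, Taylor/moment-condition argument for the kernel-consistency term, and a convolution lemma tying $\|K_h\star(\psi^j-\psi^j_h)\|_{L^2(\Omega_0)}$ to negative norms of divided differences --- is exactly that classical route, and your moments half is fine. Your worry about the weights is also resolvable: the system norm \eqref{eqn:negative-order-norm} coincides with the $\omega_j$-weighted $\ell^2$ combination of the componentwise negative norms (choose each component of the test vector near-optimal and scale), so componentwise bounds assemble into \eqref{eqn:bramble_schatz} via Cauchy--Schwarz and Minkowski.

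The genuine gap is inside the convolution lemma, which is the heart of the theorem, and your sketch of it rests on a false identity. You claim the order-$(k+1)$ B-splines of the kernel "can be rewritten as $(k+1)$-th order central differences of a single B-spline of order $2(k+1)$," i.e. $B_h^{(k+1)}=\partial_h^{k+1}B_h^{(2(k+1))}$ up to normalization. In Fourier variables the left side is $\sinc(h\xi/2)^{k+1}$ while the right side is $(i\xi)^{k+1}\sinc(h\xi/2)^{3(k+1)}$ (they do not even agree at $\xi=0$), so this step fails. The true identity runs the other way: differentiation lowers the spline order and produces divided differences, $D^{\alpha}\bigl(K_h^{2(k+1),k+1}\star v\bigr)=K_h^{2(k+1),k+1-|\alpha|}\star\partial_h^{\alpha}v$, and in particular $D^{k+1}B_h^{(2(k+1))}=\partial_h^{k+1}B_h^{(k+1)}$. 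Moreover, even with the corrected identity, your proposed mechanism --- pair $e^j$ against the "residual kernel" as a test function in \eqref{eqn:negative-order-norm} --- does not close: writing $\|K_h\star e^j\|_{L^2(\Omega_0)}=\sup_{\phi}(e^j,K_h\star\phi)/\|\phi\|_{L^2}$, one would need $\|K_h\star\phi\|_{k+1,\BX}\leq C\|\phi\|_{L^2}$ uniformly in $h$, which is false; trading derivatives of $K_h\star\phi$ for divided differences of $\phi$ only gives $\|K_h\star\phi\|_{k+1,\BX}\leq C\|\phi\|_{k+1,\BX}$, and leaving the derivatives on the scaled kernel costs $h^{-(k+1)}$. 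The missing ingredient is the Sobolev-duality inequality $\|w\|_{L^2(\Omega_0)}\leq C\sum_{|\alpha|\leq k+1}\|D^{\alpha}w\|_{-(k+1),\BX}$ (a localized Plancherel fact, using $(1+|\xi|^2)^{k+1}\leq C\sum_{|\alpha|\leq k+1}|\xi^{\alpha}|^2$ together with cutoff functions, which is where the support hypothesis and the $h$-independence of $C_{\mathrm{P}}$ enter). Applying it to $w=K_h\star e^j$ and then using the derivative-to-divided-difference identity above converts each $D^{\alpha}w$ into a uniformly bounded kernel convolved with $\partial_h^{\alpha}e^j$, whose negative norms are precisely the terms on the right of \eqref{eqn:bramble_schatz}; this also explains why the sum must run over all $|\lambda|\leq k+1$ rather than only the top order.
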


In \eqref{eqn:bramble_schatz}, we used the notation of the divided differences, which are defined as 
\begin{equation}
    \partial_{h_{x_i}}f(\bx)=\frac{1}{h_{x_i}}\left(f(\bx+\frac{1}{2}h_{x_i}\mathbf{e}_i)-f(\bx-\frac{1}{2}h_{x_i}\mathbf{e}_i)\right).\label{eq:divided_difference}
\end{equation}
where $\mathbf{e}_i$ is the unit multi-index whose $i$-th component is $1$ and all others $0$. 

For any multi-index $\lambda=(\lambda_{1},\lambda_{2},\lambda_{3})$ we set the $\lambda$-th order difference quotient to be 
\begin{equation}
    \partial_h^{\lambda}f(\bx)=(\partial^{\lambda_1}_{h_{x_1}}\partial^{\lambda_2}_{h_{x_2}}\partial^{\lambda_3}_{h_{x_3}})f(\bx).
    \label{eq:ho_divided_difference}
\end{equation}

\subsection{SIAC formulation.}

The Smoothness-Increasing Accuracy-Conserving (SIAC) kernel is comprised of $r+1$ (scaled) function translates of a given function,
\begin{equation}
\label{eq:genkernel}
\Kernel{}{}{\cdot} = \sum_{\gamma=1}^{r+1}\, {c_\gamma}\psi_{\knots{\gamma}}(\cdot),
\end{equation}
in this article, central B-Splines are used.  ${ B_{\text{\tiny T},n}}$ represents $n^{th}$-order central B-spline with knot sequence, $\text{\textbf T}$ and smoothness $n-2$.  The scaling, $H$, is generally tied to the mesh size.  The central
B-splines are defined through the relations
\[
\boldsymbol{B}_{T,1} = \chi_{\left[-\frac 12,\frac 12\right)},\qquad
\boldsymbol{B}_{T,n} = \boldsymbol{B}_{T,n-1} \star \boldsymbol{B}_{T,1},
\]
 where ${\bf T}_{n}$ represents a knot matrix for the $n^{th}$ order spline (i.e. B-spline breaks) \cite{XLiOne}.  For a symmetric kernel of $2\dgdeg+1$ B-splines, the general form of the knot matrix is 
\begin{equation}
\label{eq:knotmatrix}
{\bf T} = \begin{pmatrix}
-\frac{n + 2\dgdeg}{2} & \frac{-(n + 2\dgdeg)+2}{2} & \cdots & \frac{n-2\dgdeg}{2}\\
-\frac{n + 2\dgdeg-2}{2} & \frac{-(n + 2\dgdeg)+4}{2} & \cdots & \frac{n+2-2\dgdeg}{2}\\
\vdots & \vdots & \ddots & \vdots \\
\frac{2\dgdeg - n }{2} & \frac{2\dgdeg-n +2}{2} & \cdots & \frac{n+2\dgdeg}{2}\\
\end{pmatrix}.
\end{equation}
Each row of the knot matrix gives the B-Spline breaks of the $\gamma^{th}$ B-spline \cite{XLiOne} ($\gamma=1,\dots,{2\dgdeg+1}$).   The $c_\gamma$ are weights of the B-splines, which are determined by ensuring that the kernel satisfies consistency plus $r=2k$ moments.  We further note that 
where $\Kernel{}{H}{\cdot} = \frac{1}{H}\Kernel{}{}{\frac{\cdot}{H}}$ can be viewed as a normalized probability density function.  
We note that Mock and Lax \cite{mock1978} introduced the importance of satisfying moment conditions and pre-processing data.  This allows for recovering accuracy for discontinuous functions -- away from any discontinuities.  The pre-processing of data is important for methods not based on Galerkin orthogonality.  Further, utilizing a linear combination of B-Splines allows for writing derivatives can be written as divided differences of lower order splines,
\[
\frac{\partial^\alpha}{\partial x^\alpha}\bspline{\knots{n}}{x} = \partial_H^\alpha \bspline{\knots{n-\alpha}}{x},
\]
where $\partial_H^\alpha$ represents the $\alpha^{th}$ divided difference.  This ensures that when we pass to the negative order norm, the order of accuracy is not reduced.  These ideas were introduced by 
 Bramble and Schatz \cite{bramble1977higher} and Thome\'{e} \cite{VTH}. 

Here, we note that, using equally spaced knots for the B-spline filter, the Fourier transform of the SIAC kernel is given by 
\begin{equation}
\label{eq:SIACFourier}
\mathcal{F}(K) = \widehat{K}(\xi) = \underbrace{\sinc\left(\frac{\xi}{2}\right)^{n}}_{\text{controls dissipation}}\underbrace{\left(c_{\frac{r+2}{2}}+2\sum_{\gamma = 1}^{\lceil \frac{r}{2}\rceil}\, c_\gamma\cos\left(\left(\gamma-\frac{r+2}{2}\right) \xi\right)\right)}_{\text{moment conditions}} 
\end{equation}
As can be seen, the smoothness chosen for the B-splines controls the amount of dissipation and the number of moments controls the accuracy.  

There are a few choices for extending the filter to multi-dimensions.  One method is via a tensor product:
\[
\Kernel{}{{\bf H}}{\bx} = \Kernel{}{{\dx_1}}{x_1}  \Kernel{}{{\dx_2}}{x_2}\cdots  \Kernel{}{{\dx_d}}{x_d}.
\]
However, for computational efficiency, a rotated one-dimensional filter, the Line SIAC (LSIAC) kernel \cite{LSIAC}, is often used
\begin{equation}
\label{eq:LSIAC}
\Kernel{}{H}{} = \Kernel{}{\Gamma}{} \Rightarrow 
\psijh{j,*}(\overline x,\overline y) = \int_\Gamma
\Kernel{}{\Gamma}{\frac{\Gamma(0) - \Gamma(t)}{\dx_t}}\psijh{j}(\Gamma(t))\, dt.
\end{equation}
In \cite{LSIAC}, results in two- and three-dimensions are demonstrated.  For two-dimensions, filtering is performed along the line $\Gamma(t) = (\overline x, \overline y) +\lambda (\cos(\theta),\ \sin(\theta))$, with an angle of rotation $\theta = \tan\left(\frac{\Delta y}{\Delta x}\right)$.

\subsection{A note on computation \label{sec:SIACcompute}}

An illustration of the improved performance from using the Line SIAC kernel in two-dimensions can be seen by considering a kernel consisting of 5 B-splines of order 3 on a structured mesh.  To filter one point, 196 two-dimensional integrals are required for the implementation of the tensor product filter, while only 21 one-dimensional integrals are required for LSIAC. Computing these integrals require quadratures that respect both B-Spline breaks and mesh breaks.  Computing these breaks is the most costly aspect of the filter.  We note that if polynomials of degree $\dgdeg$ are used in the DG approximation and B-Splines of order $\dgorder$ are used,  $\lceil \frac{2\dgdeg+1}{2}\rceil$ quadrature points per region are required.

\subsection{Post-processing error estimates}

Here we present the main theorem for the $L^2$ error estimate for the postprocessed solution for the time dependent problem. 

\begin{theorem} Let $\psi$ be the exact solution to \eqref{eq:hd_rt_reduced}, and let us assume it satisfies $\psi\in C([0,T];L^2(X;H^s(\sphe)))$. $\psij{j}\in C([0,T];H^{\dgdeg+2}(X))$, $1\leq j\leq \Nang$. if $\psijh{j}$ is the DG solution to \eqref{eqn:upwind_md} with numerical initial conditions $\psijh{j}(\cdot,0)=\Pi^{\dgdeg}\psij{j}_0(\cdot)$, then 
\begin{equation}
    \left(\quadf{\Nang}{\norma{\psi(\cdot,\omega_j)-K_{h}^{2(\dgorder),\dgorder}\star \psijh{j}}_{L^2(\Omega_0)}}\right)^{1/2} \leq C_{\mathrm{T}}h^{2\dgdeg+1/2}+D\Nang^{-s},
\end{equation}
where $C_{\mathrm{T}}$ depends on the upper bound of $\left(\quadf{\Nang}{\norma{\partial_h^{\lambda} \psij{j}}_{\dgdeg+2,X}}^2\right)^{1/2}$, for all $|\lambda|\leq \dgorder$ and the constant $C_{\mathrm{P}}$ in Theorem \ref{thm:bramble_schatz} and $D$ depends on upper bounds of $\norma{\psij{}}_{L^2(X;H^s(\sphe))}$.
\end{theorem}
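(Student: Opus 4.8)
The plan is to assemble the bound from three ingredients already in hand: the Bramble--Schatz post-processing inequality (Theorem \ref{thm:bramble_schatz}), the negative-order superconvergence estimate (Theorem \ref{thm:negative_order_estimate}), and the $S_N$ angular error estimate (Theorem \ref{thm:DO_error_time_dep}). The starting point is a triangle-inequality splitting of the post-processed error into an \emph{angular} part and a \emph{post-processed spatial} part. Writing $\psij{j}$ for the exact space-continuous $S_N$ solution, I would insert $\psij{j}$ and group
\[
\psi(\cdot,\angj{j})-K_{h}^{2(\dgorder),\dgorder}\star \psijh{j}
=\underbrace{\big(\psi(\cdot,\angj{j})-\psij{j}\big)}_{\text{angular}}
+\underbrace{\big(\psij{j}-K_{h}^{2(\dgorder),\dgorder}\star \psijh{j}\big)}_{\text{post-processed spatial}}.
\]
Taking the weighted $\ell^2$ norm over the ordinates and applying Minkowski's inequality separates the two contributions. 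Since the filter acts only in physical space, the angular remainder is left unconvolved; because $\Omega_0\subset\BX$ its weighted $L^2(\Omega_0)$ norm is controlled by its $L^2(\BX)$ norm, and Theorem \ref{thm:DO_error_time_dep} then supplies the $D\Nang^{-s}$ term.

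For the spatial part I would apply Theorem \ref{thm:bramble_schatz} with $\Psi=(\psij{j})$ the exact $S_N$ solution and $\Psi_h=(\psijh{j})$ its DG approximation, under the support condition $\Omega_0+2\,\mathrm{supp}(K_{h}^{2(\dgorder),\dgorder})\subset\subset\BX$. This produces two terms: a \emph{moment} term bounded by $\frac{h^{2\dgdeg+2}}{(2\dgdeg+2)!}\big(\quadf{\Nang}{|\psij{j}|^2_{2\dgdeg+2,\BX}}\big)^{1/2}=\Oh(h^{2\dgdeg+2})$, and a \emph{negative-order} term $C_{\mathrm P}\sum_{|\lambda|\le\dgorder}\norma{\partial_h^{\lambda}(\Psi-\Psi_h)}_{-(\dgorder),\BX}$. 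The crux is the second term: I invoke Theorem \ref{thm:negative_order_estimate} together with the observation stated just after it that, for constant scattering cross section, the $2\dgdeg+\tfrac{1}{2}$ negative-order estimate also holds for the divided differences of the error. Since the number of multi-indices with $|\lambda|\le\dgorder$ is finite and independent of $h$, summing gives $C_{\mathrm P}\sum_{|\lambda|\le\dgorder}\norma{\partial_h^{\lambda}(\Psi-\Psi_h)}_{-(\dgorder),\BX}\le C\,h^{2\dgdeg+1/2}$.

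Combining the two Bramble--Schatz contributions, the moment term $\Oh(h^{2\dgdeg+2})$ is higher order than the negative-order term $\Oh(h^{2\dgdeg+1/2})$, so for $h$ sufficiently small the post-processed spatial error is bounded by $C_{\mathrm T}h^{2\dgdeg+1/2}$, with $C_{\mathrm T}$ absorbing $C_{\mathrm P}$ and the regularity constants for the divided differences $\partial_h^{\lambda}\psij{j}$. Adding back the angular contribution via the triangle inequality from the first step yields $C_{\mathrm T}h^{2\dgdeg+1/2}+D\Nang^{-s}$, exactly the claimed estimate.

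The main obstacle I expect is justifying that the negative-order superconvergence transfers to \emph{all} divided differences $\partial_h^{\lambda}$ with $|\lambda|\le\dgorder$ uniformly in $h$. This is precisely where the constant-cross-section hypothesis enters: it renders the DG scheme translation-invariant on the uniform mesh, so $\partial_h^{\lambda}$ commutes with the evolution and the dual-problem argument used for Theorem \ref{thm:negative_order_estimate} can be replayed on the differenced error to produce the same $h^{2\dgdeg+1/2}$ rate. A secondary point worth stating explicitly is that filtering is applied only to the spatial error; the angular remainder does not need to be convolved, so no additional mapping property of the convolution operator on the angular part is required beyond the monotonicity $\norma{\cdot}_{L^2(\Omega_0)}\le\norma{\cdot}_{L^2(\BX)}$.
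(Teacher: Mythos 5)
Your proposal is correct and follows essentially the same route as the paper's proof: a triangle-inequality splitting into an angular part and a post-processed spatial part, where the spatial part is handled by Theorem \ref{thm:bramble_schatz} together with the divided-difference version of Theorem \ref{thm:negative_order_estimate} (valid because $\sigma$ is constant, so $\partial_h^{\lambda}$ commutes with the equation), and the angular part yields the $D\Nang^{-s}$ term via Theorem \ref{thm:DO_error_time_dep}. The only cosmetic difference is in the splitting itself: the paper inserts $K_{h}^{2(\dgorder),\dgorder}\star\psij{j}$, so its angular term is $\psi(\cdot,\angj{j})-K_{h}^{2(\dgorder),\dgorder}\star\psij{j}$ and is bounded by appealing (somewhat loosely) to the proof of Theorem \ref{thm:total_error_bound}, whereas you leave the angular remainder $\psi(\cdot,\angj{j})-\psij{j}$ unconvolved and bound it directly by Theorem \ref{thm:DO_error_time_dep}, which is, if anything, the cleaner variant.
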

\begin{proof}
    A direct application of triangle inequality gives,
    \begin{align}
    \left(\quadf{\Nang}{\norma{\psi(\cdot,\omega_j)-K_{h}^{2(\dgorder),\dgorder}\star \psijh{j}}_{L^2(\Omega_0)}}\right)^{1/2}  &\leq  \left(\quadf{\Nang}{\norma{\psi(\cdot,\omega_j)-K_{h}^{2(\dgorder),\dgorder}\star \psij{j}}_{L^2(\Omega_0)}}\right)^{1/2}  \notag\\
         &+\left(\quadf{\Nang}{\norma{\psij{j}(\cdot,\omega_j)-K_{h}^{2(\dgorder),\dgorder}\star \psijh{j}}_{L^2(\Omega_0)}}\right)^{1/2} \notag
    \end{align}
    The first term in the inequality comes from the proof of Theorem \ref{thm:total_error_bound}. To bound the second term, we use the fact that since the scattering $\sigma$ is constant, then for all multi-indices $\lambda$, $\partial_h^{\lambda}\psi$ satisfies equation \eqref{eq:hd_rt_reduced}. Then by Theorem \ref{thm:negative_order_estimate} 
    \begin{equation}
        \norma{\partial_h^{\lambda}(\psi-\psi_h)}_{-(\dgorder),\BX}\leq C_{\lambda}h^{2\dgdeg+1/2},
    \end{equation}
    with $C_{\lambda}$ depending on upper bounds of $\norma{\partial_h^{\lambda}\psi^j}_{\dgdeg+2,\BX}$, $1\leq j\leq\Nang$. Then the conclusion follows applying Theorem \ref{thm:bramble_schatz}.
\end{proof}
\section{Numerical results\label{sec:num}}
Here, we demonstrate the performance of SIAC filter through a series of numerical examples.
Due to its superior efficiency discussed in Sec \ref{sec:SIACcompute}, we apply the line SIAC filter \cite{LSIAC} in all our numerical tests.

The linear system resulting from the discretization of the steady-state problem or implicit time-marching is solved through Source Iteration with Diffusion Synthetic Acceleration (SI-DSA) \cite{adjerid2002posteriori}. A partially consistent DSA strategy is applied as our preconditioner (see \cite{wareing1993new} and Appendix A of \cite{peng2025flexible} for details). The diffusion equation inside our DSA preconditioner is solved by conjugate gradient method with algebraic multigrid (AMG) preconditioner.  We set the stopping criteria of the source iteration as $||\barp^{(\ell)}-\barp^{(\ell-1)}||<\epsilon_{\textrm{SI}}$  with $\barp^{(\ell)}$ be the density in the $\ell$-th iteration. We set $\epsilon_{\textrm{SI}}$ as $10^{-10}$ for $K\leq 2$ and $10^{-11}$ for $K=3$.

We implement our code in the {\tt{Julia}} language, leveraging {\tt{IterativeSolvers.jl}} package for Krylov solver, {\tt{AlgebraMultigrid.jl}} package for AMG preconditioner and {\tt{MSIAC.jl}} \cite{docampo2023magic} for the SIAC filter. Numerical tests are performed on a MacBook with Apple M1 chip. 
\subsection{Accuracy test}
We present a series of accuracy tests to demonstrate superconvergence and the computational saving gained to reach the same level of accuracy by applying the SIAC filter.  
\subsubsection{Steady-state problem\label{sec:num-accuracy-steady}}
The first problem that we consider the computational domain $[-1,1]^2$  with vacuum boundary conditions. A uniform mesh with $\Nx\times N_y$ rectangular elements is applied to partition the computational domain. The mesh size is defined to be $\dx=\min(1/\Nx,1/N_y)$. We impose source terms so that the manufactured solution $f(x,y,v_x,v_y)=\sin(\pi x)\sin(\pi y)$ is satisfied. We consider two different material properties: (1) constant material, $\sigma_s(x,y)=1$ and $\sigma_a(x,y)=0$; and (2) variable material, $\sigma_s(x,y)=2+\sin(16\pi x)\sin(16\pi y)$ and $\sigma_a=0$.

\textbf{Superconvergence.} In Fig. \ref{fig:steady_state_accuracy_test}, we present $L^2$ error of the numerical solution before and after post-processing with the SIAC filter. Before applying the filter, we observe the expected $(\dgorder)$-th order of accuracy. After applying the SIAC filter, we observe approximately $3.5$ order of accuracy for $Q_1$ elements, and $2(\dgdeg+2)$-th order of accuracy for $\dgdeg=2,3$. This observation matches the theories given in Sec. \ref{sec:superconvergence-filter}. 

We also observe that the filtered solution is more accurate  when the mesh resolution is sufficiently high, while it may not be more accurate on a coarse mesh.

\textbf{Efficiency gain.} 
In this example, directly applying the SIAC filter to $\barp$ always leads to less than $10\%$ more computational time.
In Fig. \ref{fig:steady_state_accuracy_test_speed}, we present the relation between the $L^2$ error and the computational time. We observe that the SIAC filter enables us to obtain significantly more accurate results with almost the same computational time as when a more refined mesh is used. Moreover, we observe that, after a break-even point, applying the SIAC filter takes less computational time to reach the desired level accuracy compared to $h$-refinement.

For constant scattering, to reach an error of pproximately $7\times 10^{-5}$ with $Q_1$ elements, it takes approximately $15.42$ seconds with $\dx=N_y=20$ when the SIAC filter is applied. It takes approximately $140.06$ seconds with $\Nx=N_y=80$ without the help of SIAC filter to reach the same level of accuracy. In this case, SIAC filter leads to approximately $9$ times acceleration compared to computing on a refined mesh. 

For the constant scattering, when $Q_2$ element are used, the SIAC filters enables us to obtain $6.63\times 10^{-8}$ in the $L^2$ error with approximately $160.71$ seconds and $\Nx=N_y=40$, while only $1.35\times10^{-7}$ for the $L^2$ error is achieved with $621.45$ seconds and $\Nx=N_y=80$ without filtering. In this case, applying SIAC filter permits us to obtain $2$ times more accurate results with only $25\%$ computational time compared to refining the mesh.

Similar observations can be seen for the variable scattering case as well.

\begin{figure}[htbp]
    \centering
    \includegraphics[width=0.45\textwidth]{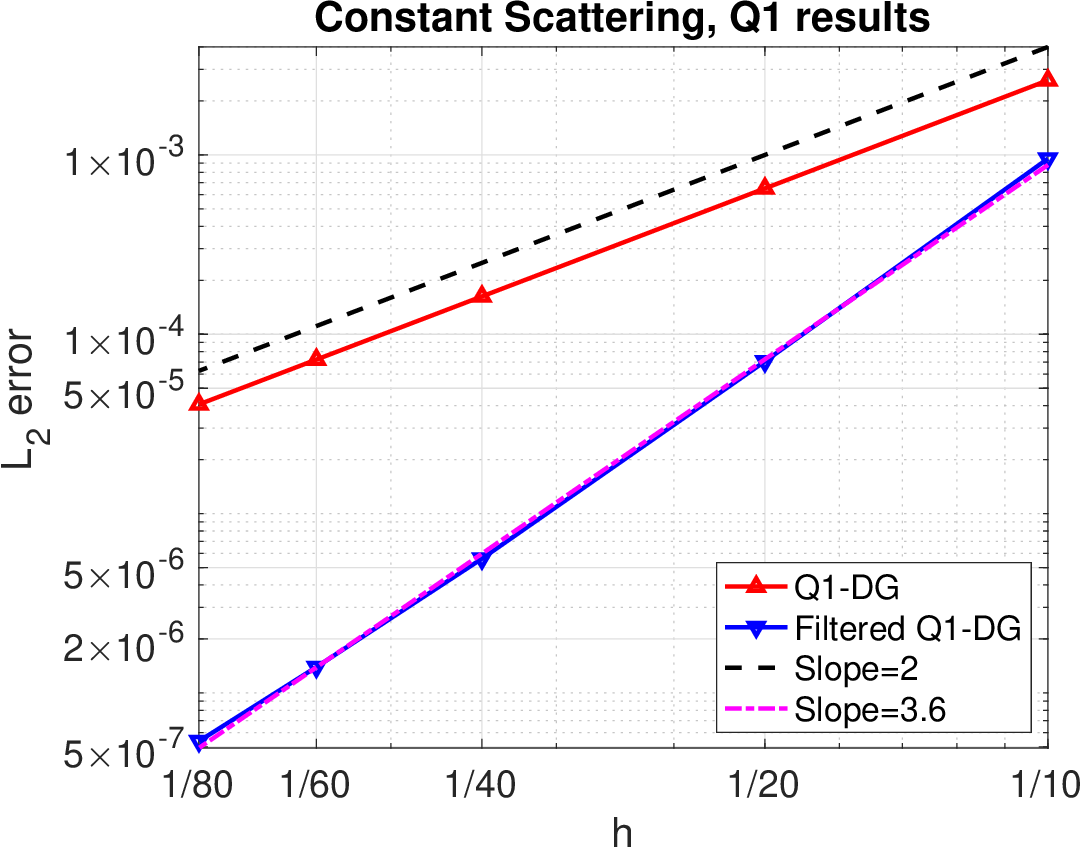}
    \includegraphics[width=0.45\textwidth]{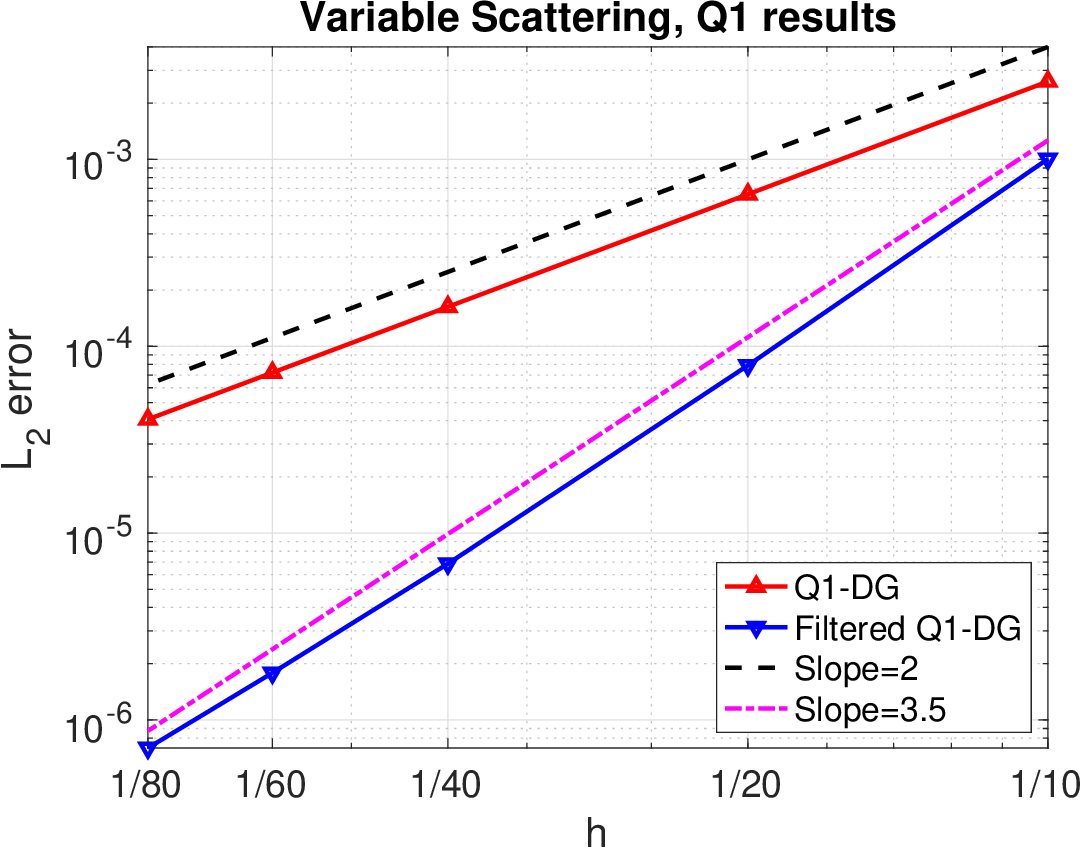}
    \includegraphics[width=0.45\textwidth]{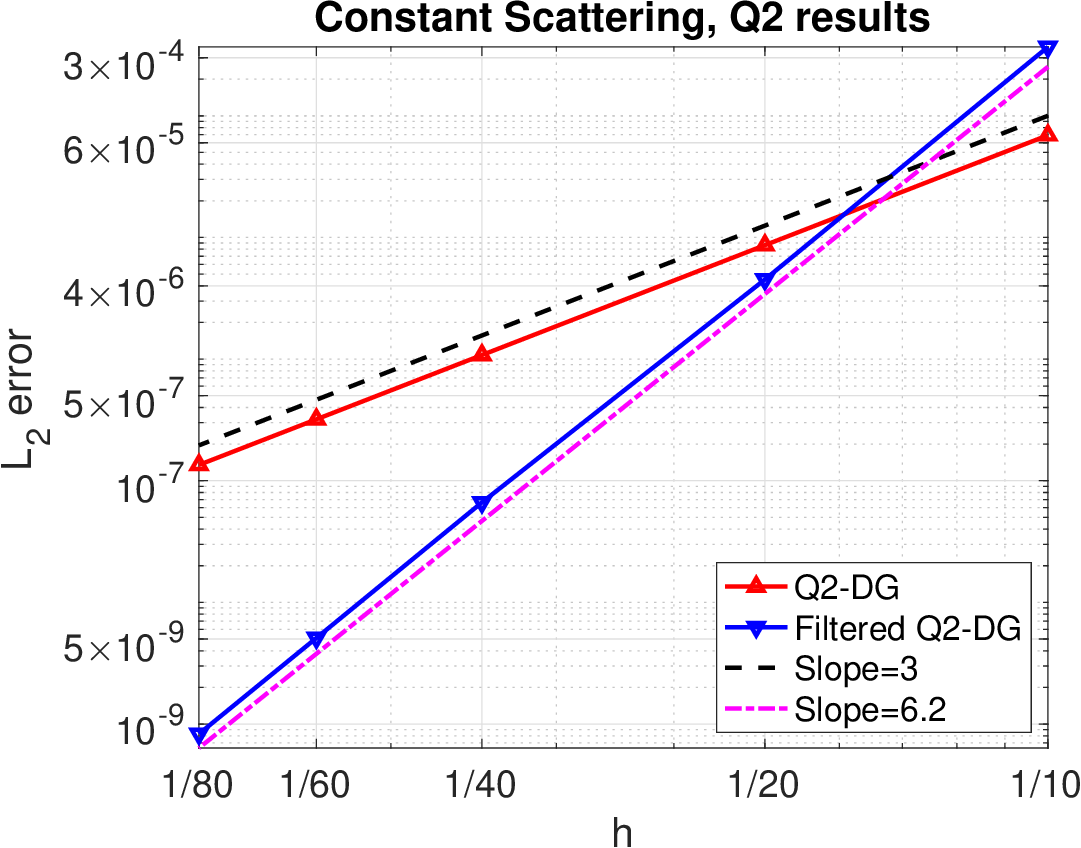}
    \includegraphics[width=0.45\textwidth]{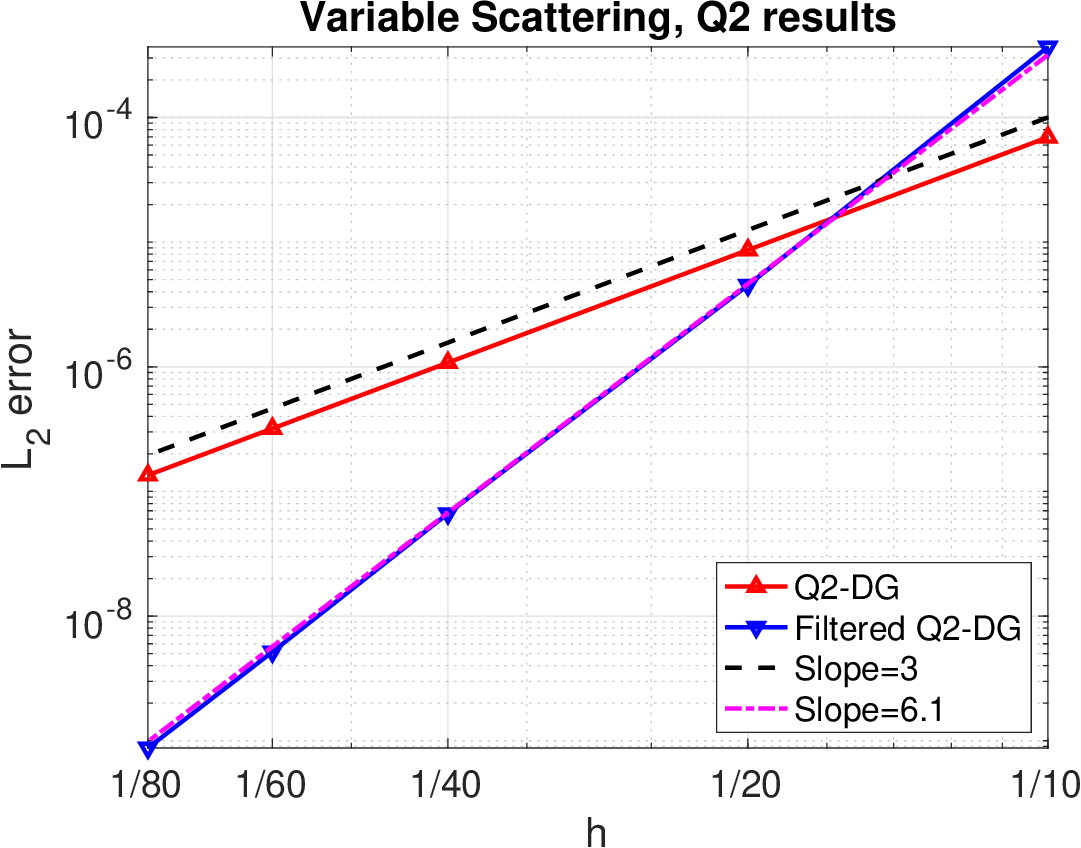}
    \includegraphics[width=0.45\textwidth]{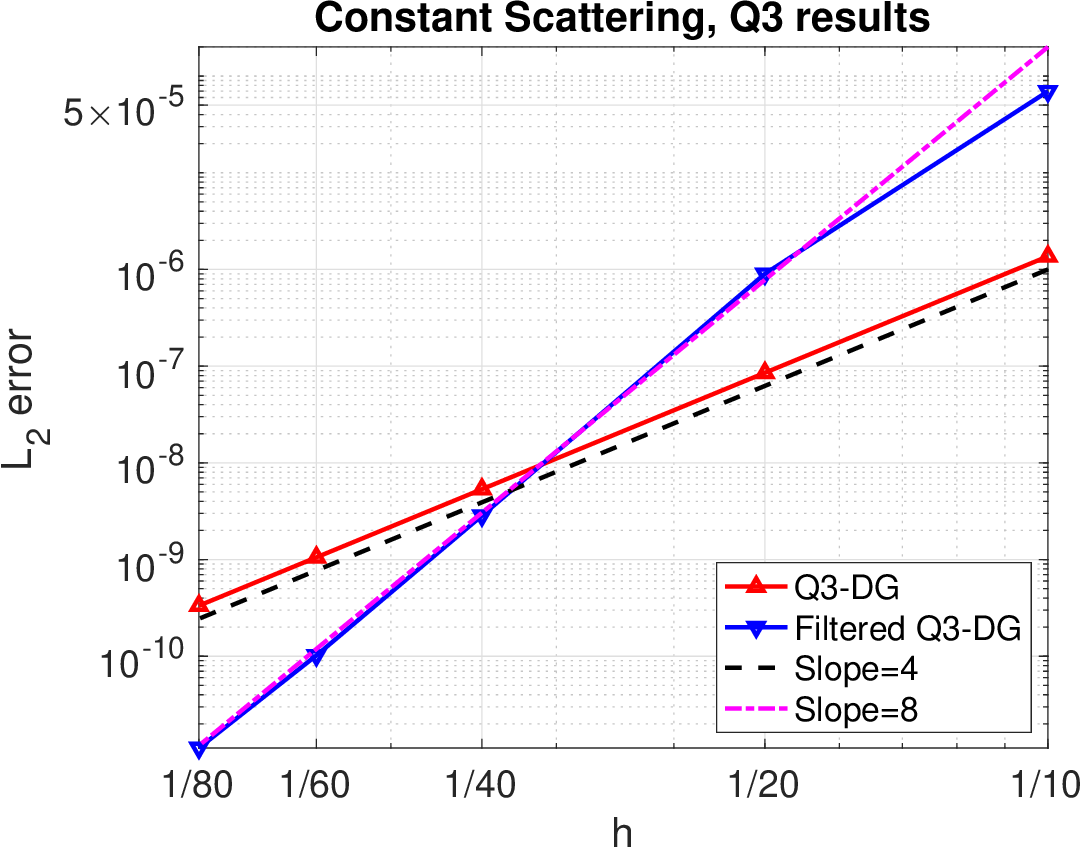}
    \includegraphics[width=0.45\textwidth]{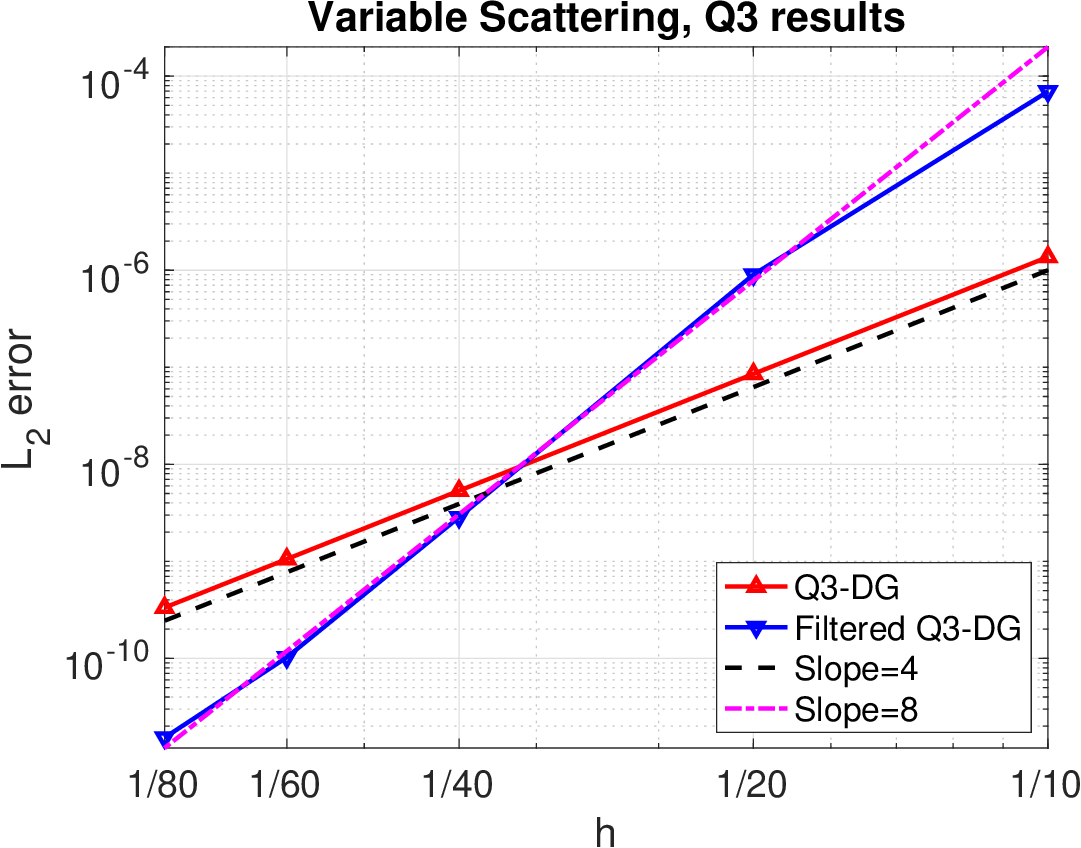}
    \caption{$L^2$ error vs mesh resolution for the steady-state accuracy test in Section \ref{sec:num-accuracy-steady}. Left: constant scattering. Right: variable scattering.\label{fig:steady_state_accuracy_test}} 
\end{figure}

\begin{figure}[htbp]
    \centering
    \includegraphics[width=0.45\textwidth]{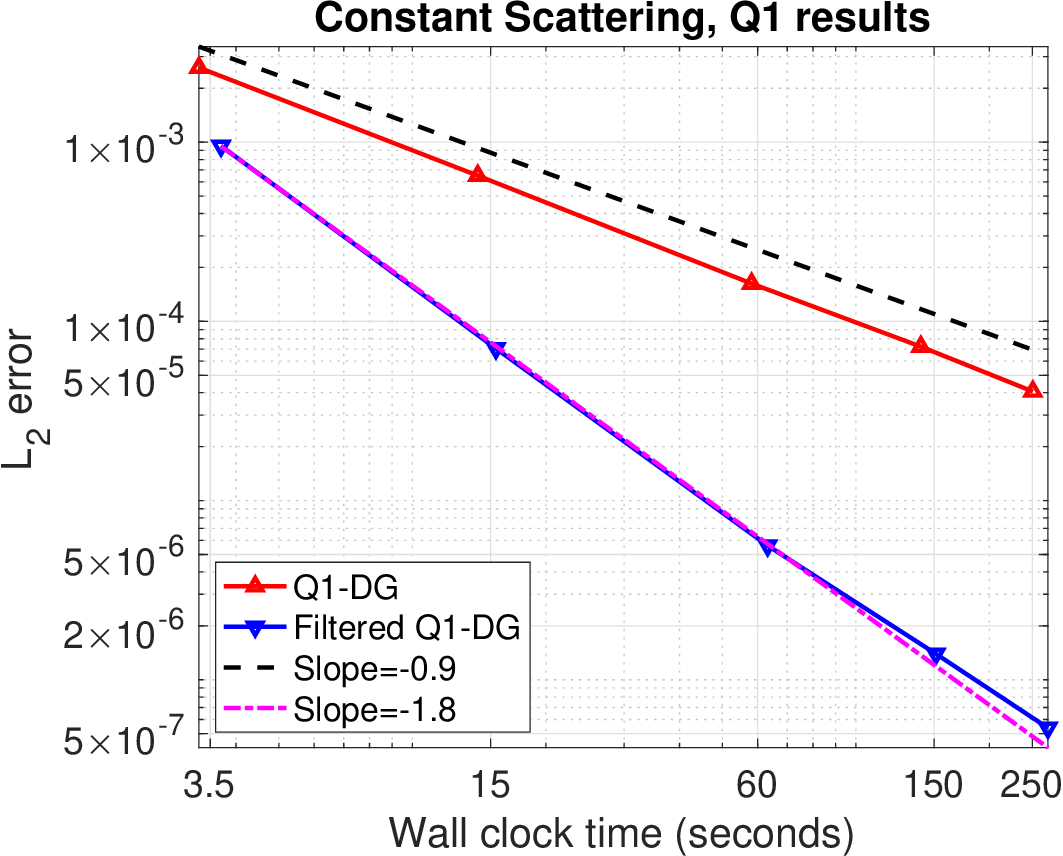}
    \includegraphics[width=0.45\textwidth]{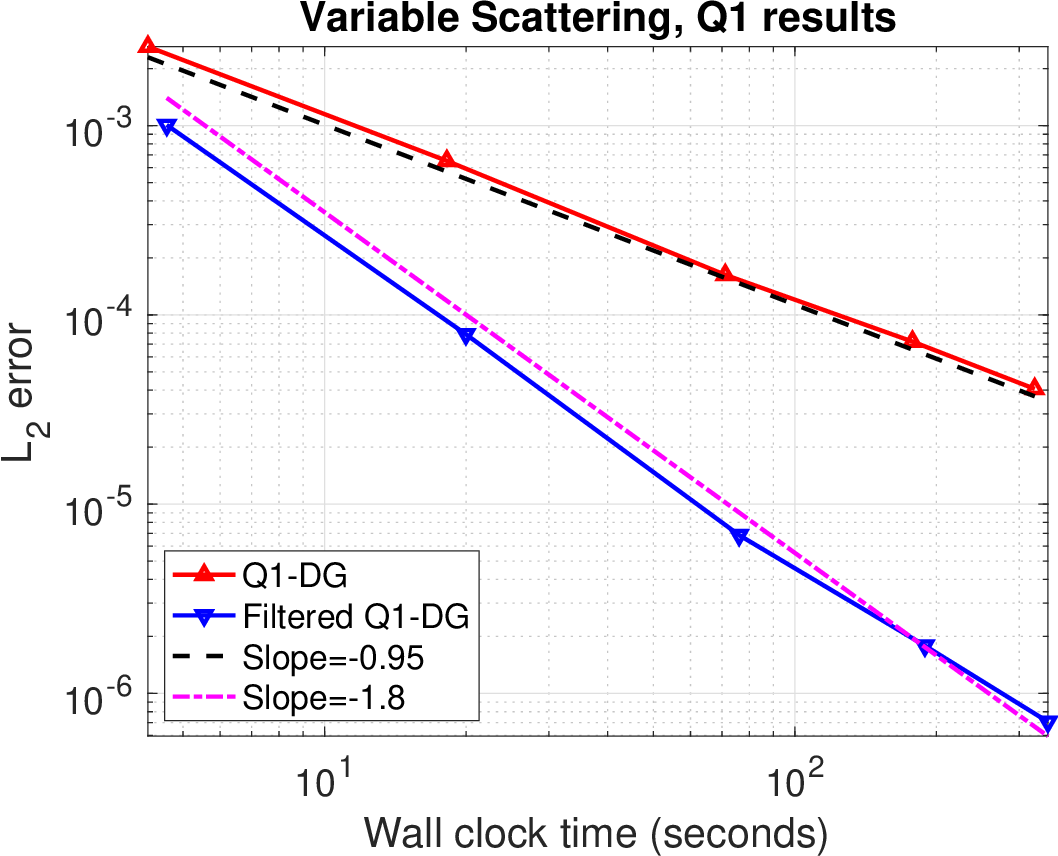}
    \includegraphics[width=0.45\textwidth]{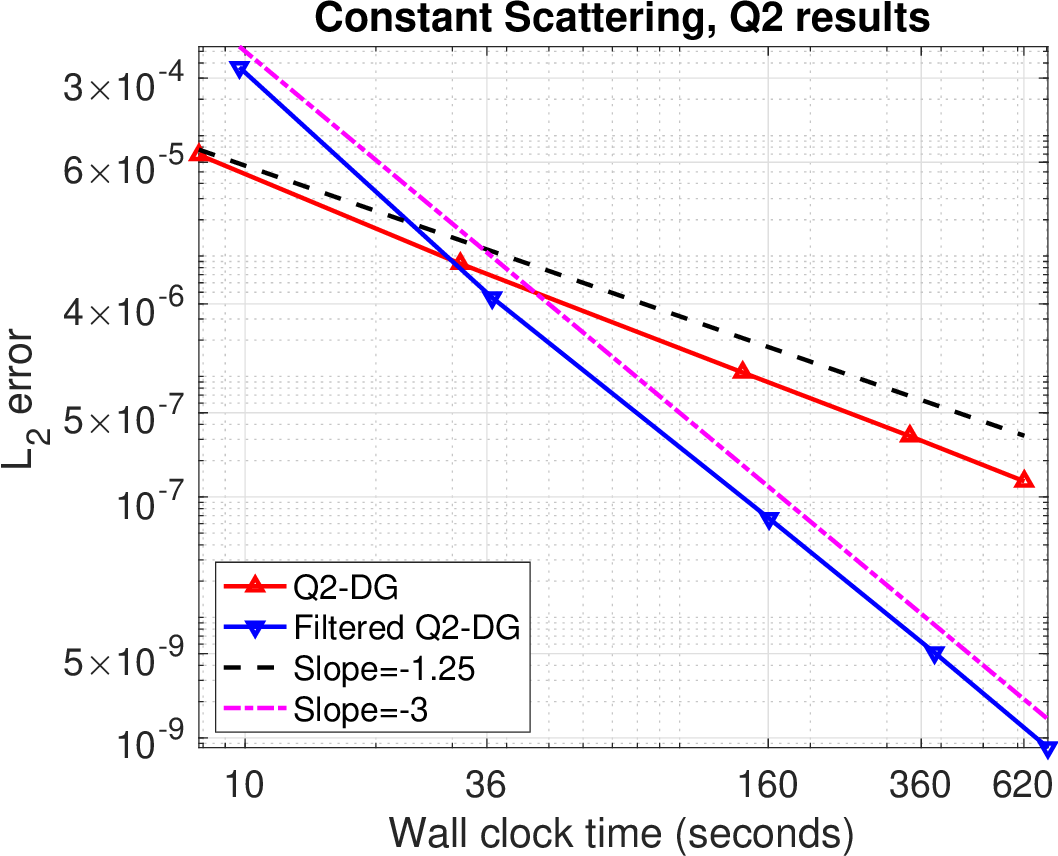}
    \includegraphics[width=0.45\textwidth]{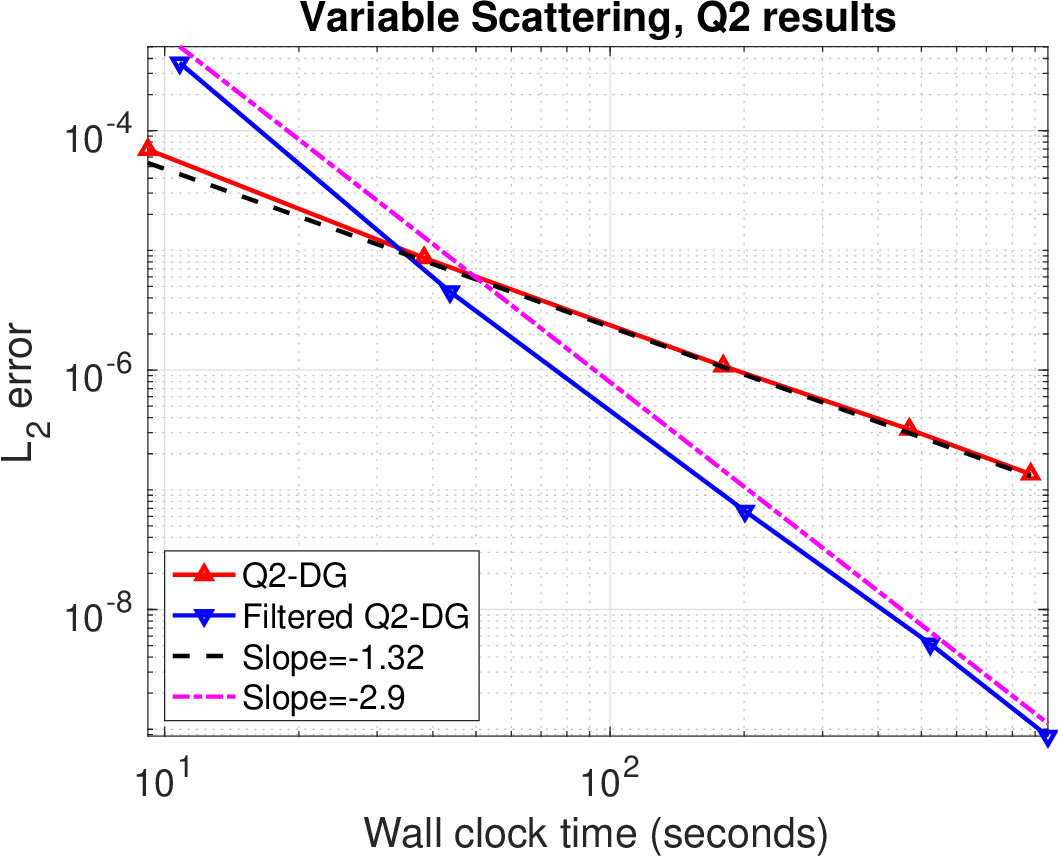}
    \includegraphics[width=0.45\textwidth]{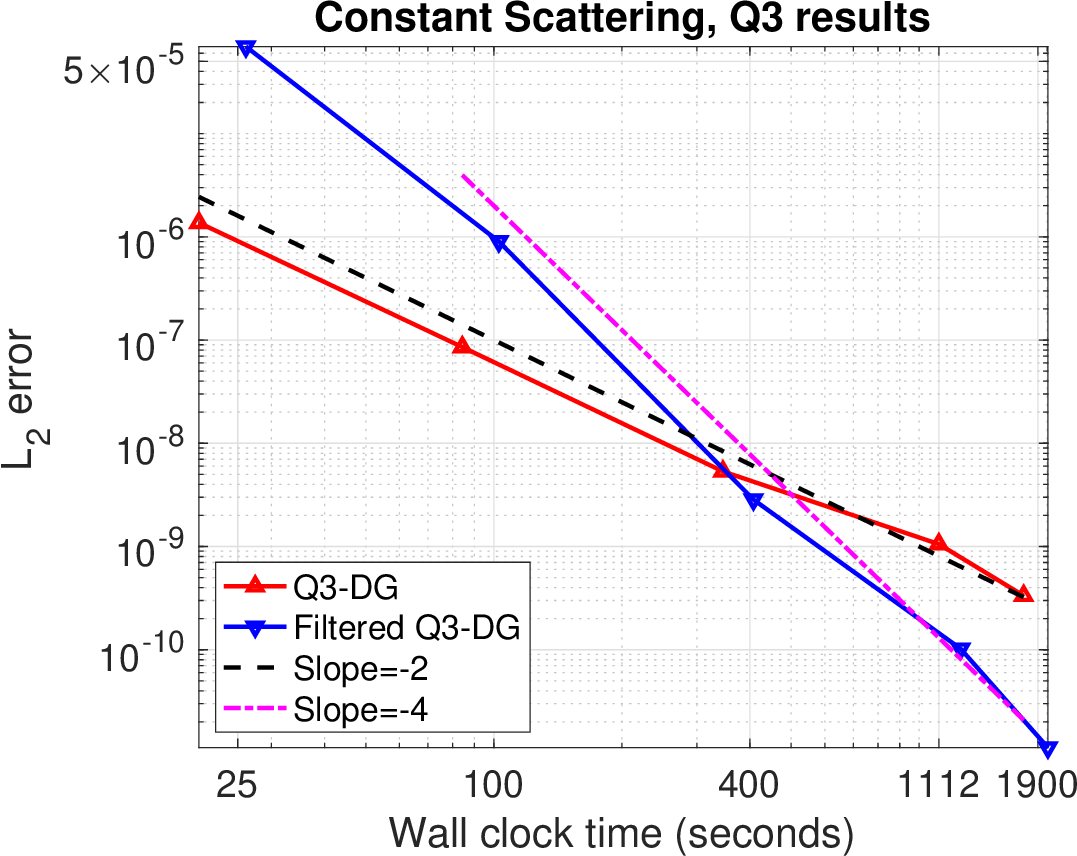}
    \includegraphics[width=0.45\textwidth]{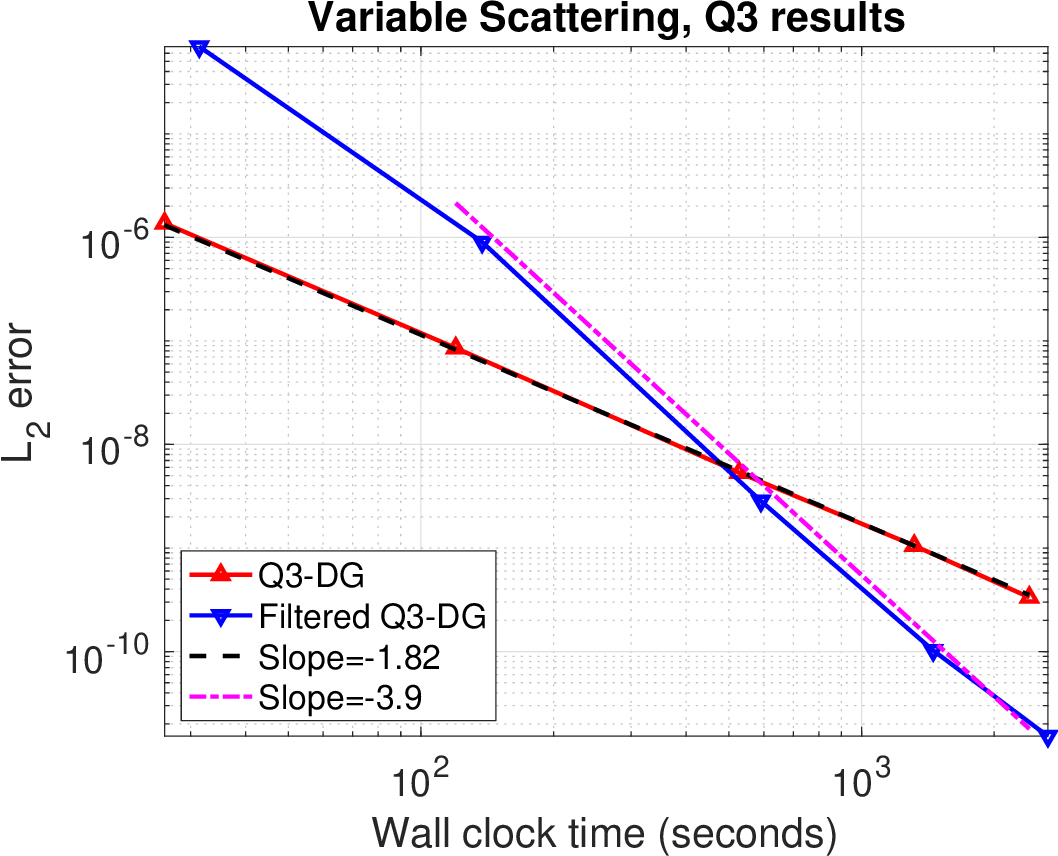}
    \caption{$L^2$ error vs computational time for the steady-state accuracy test in Section \ref{sec:num-accuracy-steady}. Left: constant scattering. Right: variable scattering.\label{fig:steady_state_accuracy_test_speed}} 
\end{figure}

\subsubsection{Time dependent problem\label{sec:num-accuracy-td}}
For the time-dependent case, we consider the computational domain $[-1,1]^2$ with vaccum boundary conditions. A uniform mesh with $N_x\times N_y$ rectangular elements is used with mesh size $h=\min(1/N_x,1/N_y)$. The material properties used in this example are $\sigs(x,y)=1$ and $\siga(x,y)=0$. We impose source terms so that $f(t,x,y,v_x,v_y)=\exp(-t)\sin(\pi x)\sin(\pi y)$ is an exact solution. We run the simulation from time $t=0$ to $t=0.5$.

\textbf{Superconvergence.}
For the time-dependent problem, we test both a BDF$2$ and BDF$3$ schemes. For these methods, the temporal error will become dominant when the SIAC filter is applied and, in order to observe spatial superconvergence, either a higher-order temporal discretization should be used, or the CFL should be modified accordingly. We consider the following discretizations (1) $Q1$-BDF$2$ with a time step size $\dt= h$, (2) $Q1$-BDF$3$ with a time step size $\dt=h$ and (3) $Q2$-BDF$3$ with $\dt=4h^{\frac{5}{3}}$. 

We present the relation between the $L^2-$error and the mesh size in Fig. \ref{fig:time_dependent_accuracy_test}. From the top left plot, we observe that SIAC filter is able to improve the accuracy of $Q_1$-BDF$2$, but the accuracy order is still second order due to the dominance of the temporal error. From the top right result, we observe that the SIAC filter improves the accuracy order of $Q_1$-BDF$3$ from second order to third order. From the bottom left figure, slightly higher than $5$-th order accuracy is observed for $Q_2$-BDF$3$ with $\dt=4h^{\frac{5}{3}}$ when SIAC filter is applied. In summary, the SIAC filter is able to improve the order of accuracy to $2\dgorder$ with sufficiently accurate temporal discretization.

\textbf{Efficiency gain.}  We compare the efficiency of $Q_1$-BDF$2$, $Q_1$-BDF$3$ without post-processing with the filtered $Q_1$-BDF$3$. The time step size is chosen as $\dt=h$ for all methods. 

Though BDF$3$ involves  more vector operations per time step than BDF$2$, both of them only require one linear solve per time step through SI-DSA. Since the linear solve SI-DSA takes significantly longer time than additional vector operations in BDF$3$, the overall computational efficiency of these two time integrators are comparable. 

In the bottom right plot of Fig. \ref{fig:time_dependent_accuracy_test}, we present the relation between the computational time, namely $T_{\textrm{comp.}}$, and the $L^2-$error. We observe that the $L^2-$error scales roughly as $T_{\textrm{comp.}}^{-0.8}$ for $Q_1$-BDF$2$ and $Q_1$-BDF$3$ without filtering, while it scales as $T_{\textrm{comp.}}^{-1.3}$ for $Q_1$-BDF$3$ post-processed using the SIAC filter. Utilizing the same computational time, $Q_1$-BDF$3$ with post-processing achieves significantly more accurate results.

Specifically, with $N_x=N_y=96$, $Q_1$-BDF$2$ and $Q_1$-BDF$3$ before filtering achieves $2.52\times10^{-5}$ and $1.71\times 10^{-5}$ $L^2-$error with $3004.53$ seconds and approximately $3049.48$ seconds computational time, respectively. By applying SIAC filter, $Q_1$-BDF$3$ with $N_x=N_y=32$ is able to achieve $7.70\times10^{-6}$ error with only $152.94$ seconds. Consequently, using the SIAC filter and BDF$3$ achieves an approximately $3.26$ times more accurate result with $19.65$ times acceleration over $Q_1$-BDF$2$, and a $2.22$ times more accurate result with  $19.94$ times acceleration over $Q_2$-BDF$3$.

Compared to the steady-state case, the time of post-processing using the SIAC filter is less significant -- it is smaller than $5\%$ -- since the total number of source iterations is significantly higher in the time-marching case.  

\begin{figure}[htbp]
    \centering
    \includegraphics[width=0.45\textwidth]{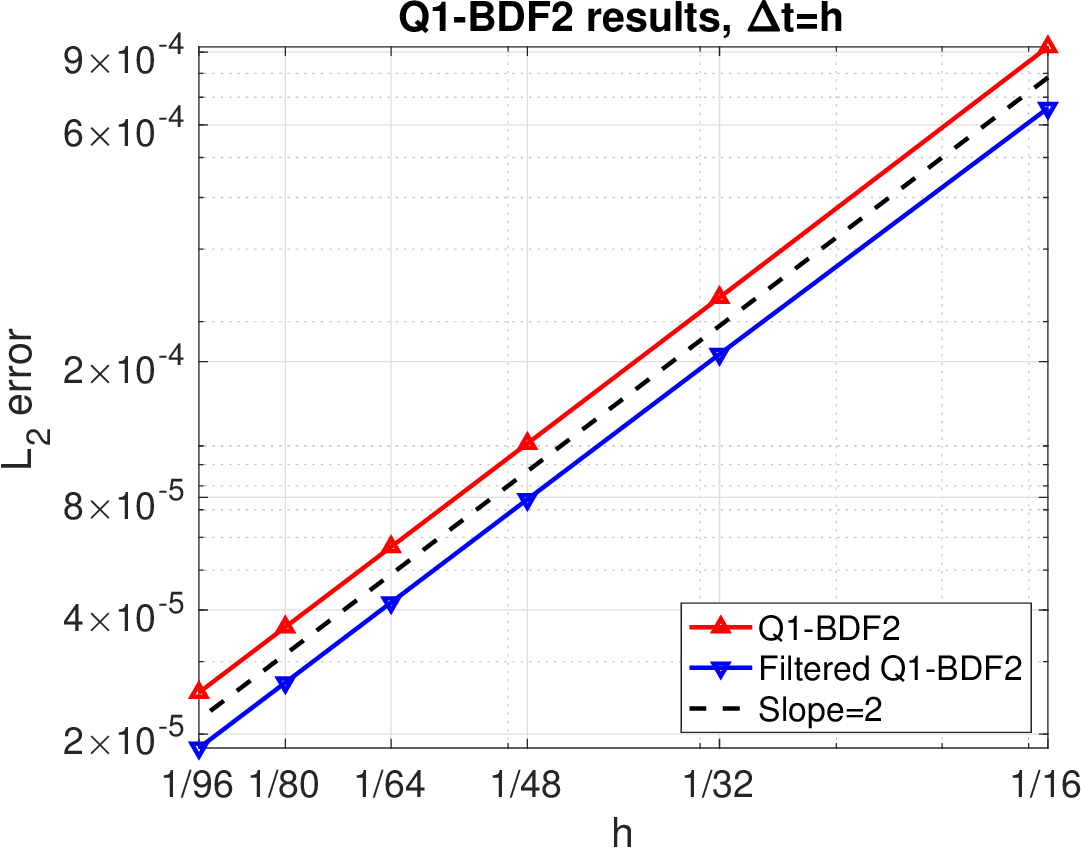}
    \includegraphics[width=0.45\textwidth]{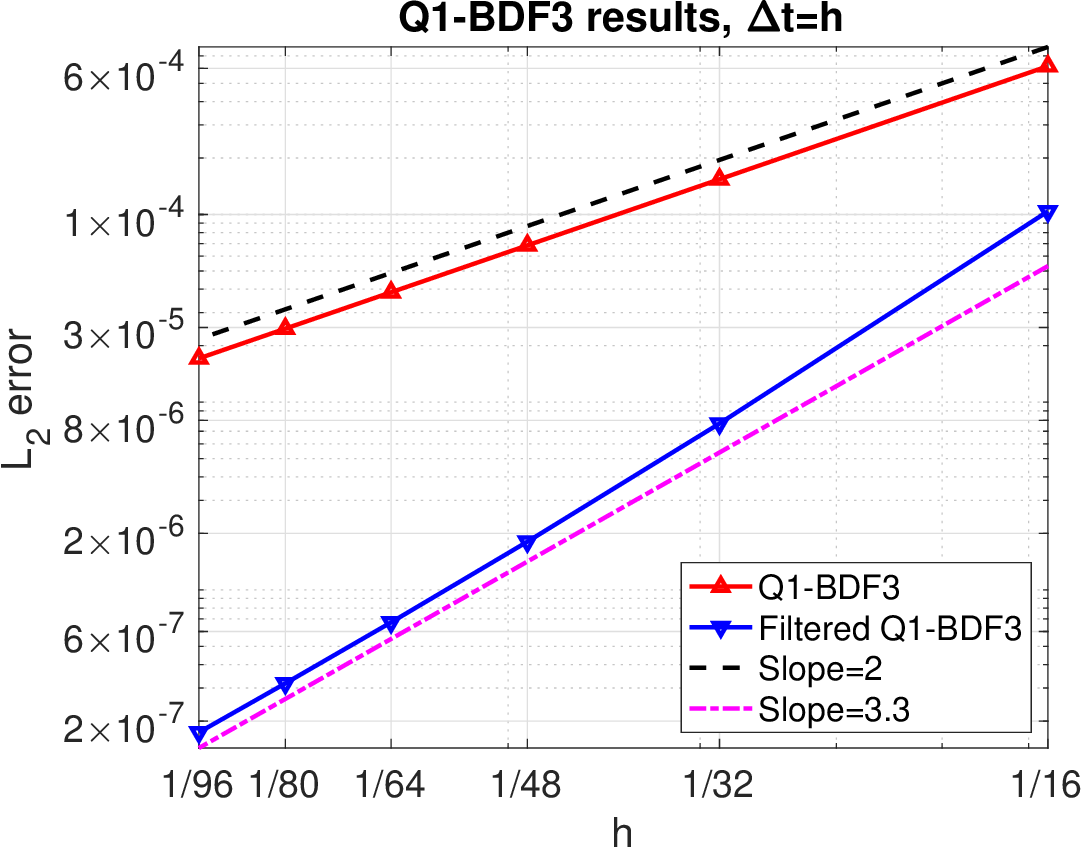}
    \includegraphics[width=0.45\textwidth]{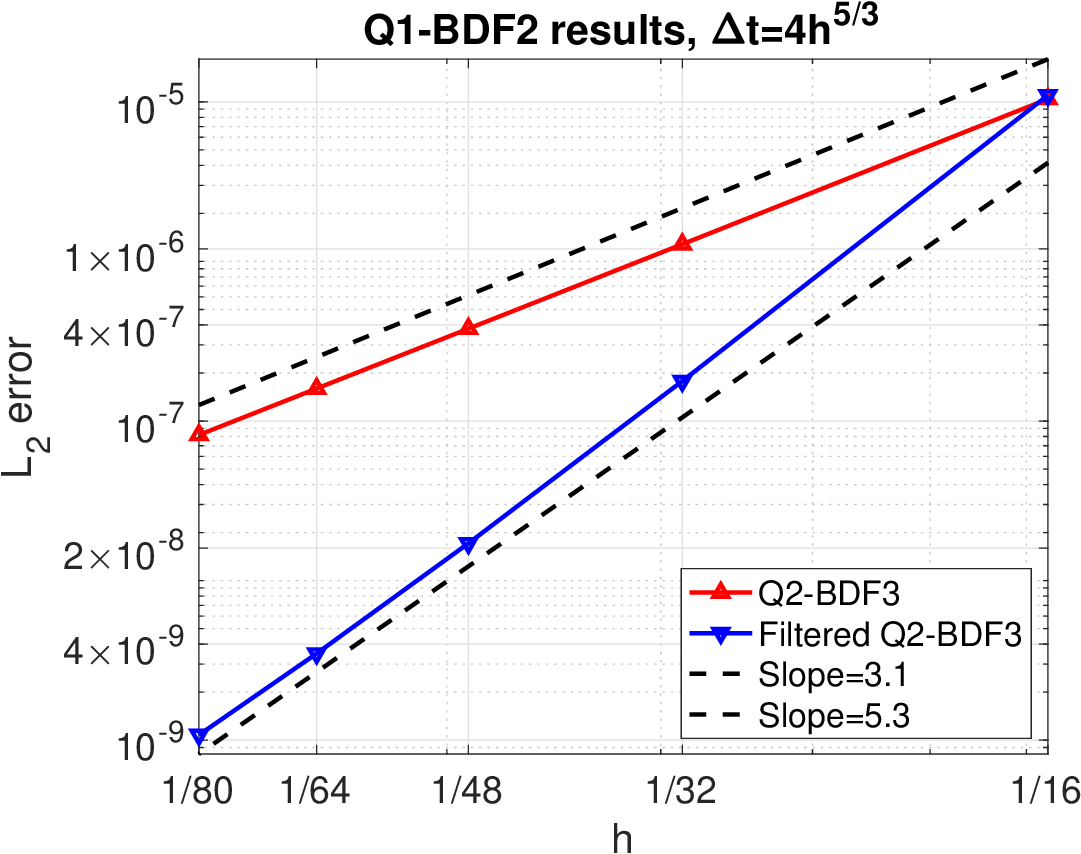}
    \includegraphics[width=0.45\textwidth]{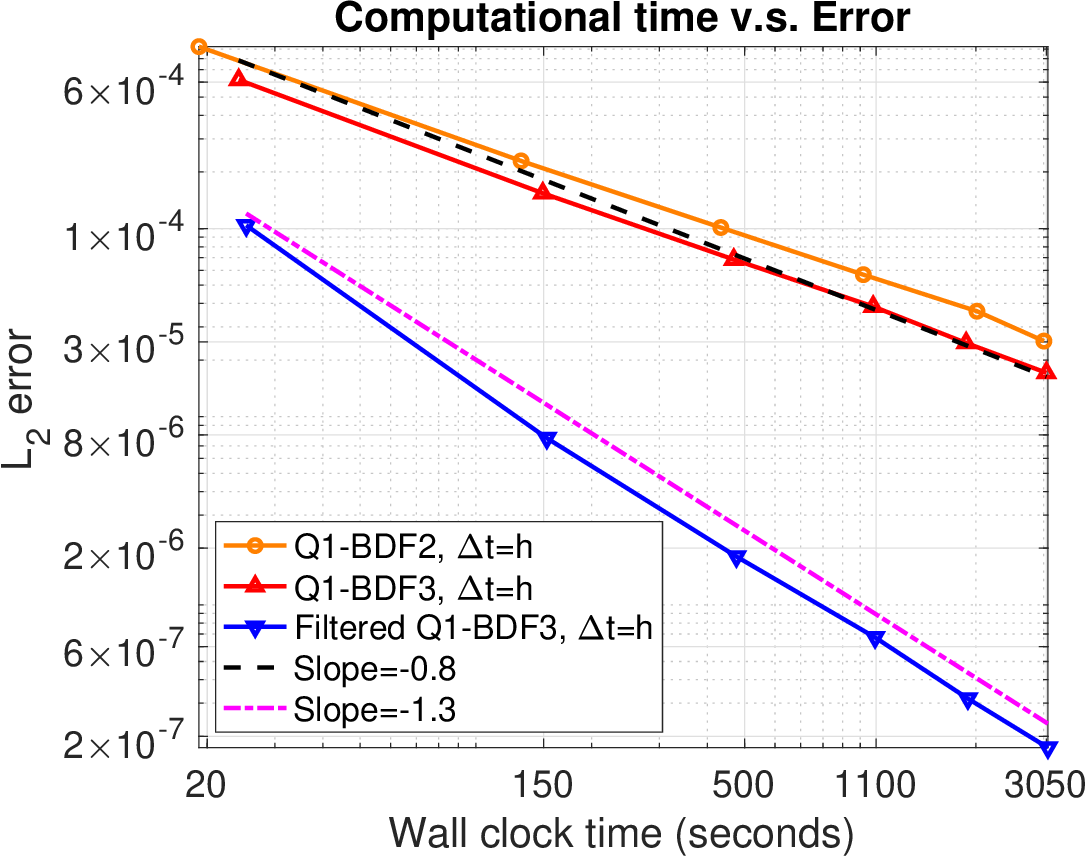}
    \caption{Results for the time-dependent accuracy test in Section \ref{sec:num-accuracy-td}. Top left: $L^2$ error v.s. $\dx$ for $Q_1$-BDF$2$ with $\dt=\dx$. Top right: $L^2$ error v.s. $h$ for $Q_1$-BDF$3$ with $\dt=h$. Bottom left: $L^2$ error v.s. $\dx$ for $Q_2$-BDF$3$ with $\dt=\dx^{\frac{5}{3}}$. \label{fig:time_dependent_accuracy_test}} 
\end{figure}

\subsection{Steady-state with variable scattering\label{sec:variable-scattering}}
We consider a steady-state problem on the computational domain $[-1,1]^2$ with vacuum boundary conditions and a Gaussian source $G(x,y)=\frac{10}{\pi}\exp(-100(x^2+y^2))$. There is no absorption, while the scattering cross section is defined as 
\begin{equation}
   \sigs(x,y) = \left\{
\begin{array}{ll}
99r^{4}(2-r^4)^2 + 1, \quad & r = \sqrt{x^2+y^2}\leq 1, \\
100, & \textrm{otherwise}.
\end{array}
\right.
\end{equation}
The configuration of $\sigs$ and a reference solution generated with $Q_1$ DG using $128\times128$ rectangular mesh in space and CL($80,40$) quadrature rule is presented in Fig. \ref{fig:variable-scattering}.

As discussed in \cite{li2024random,fu2025random}, the regularity of RTE is low in the angular space, and the $S_N$ method may suffer from significant order reduction in the angular space. With low regularity in the angular space, we are not able to observe superconvergence when applying the SIAC filter due to the dominating angular error. However, as shown in Fig. \ref{fig:variable-scattering}, the SIAC filter still removes non-physical oscillations in the center of computational domain and improves the resolution of the solution on a coarse space-angle mesh with $32\times32$ elements in the physical space and CL($20,10$).

\begin{figure}[htbp]
    \centering
    \includegraphics[height=0.4\textwidth,trim=10mm 0mm 25mm 0mm, clip]{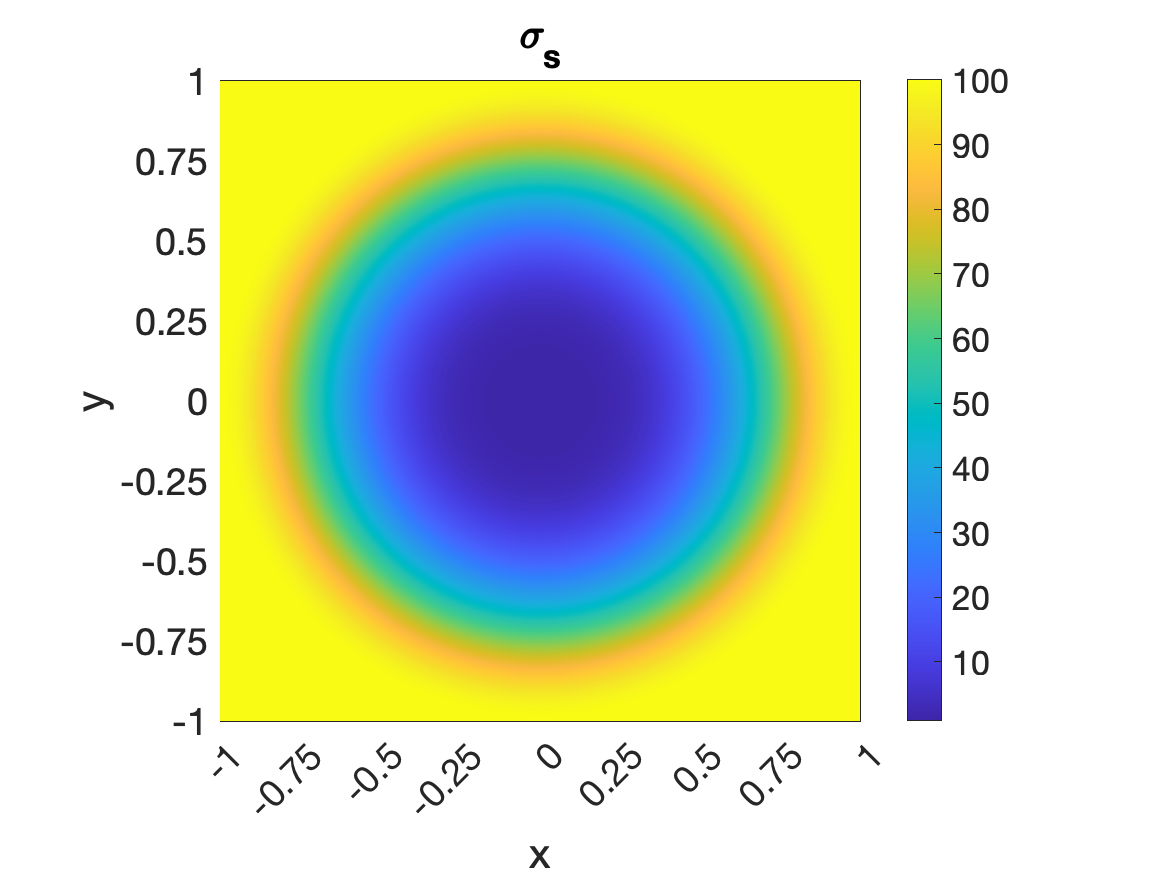}
    \includegraphics[width=0.4\textwidth]{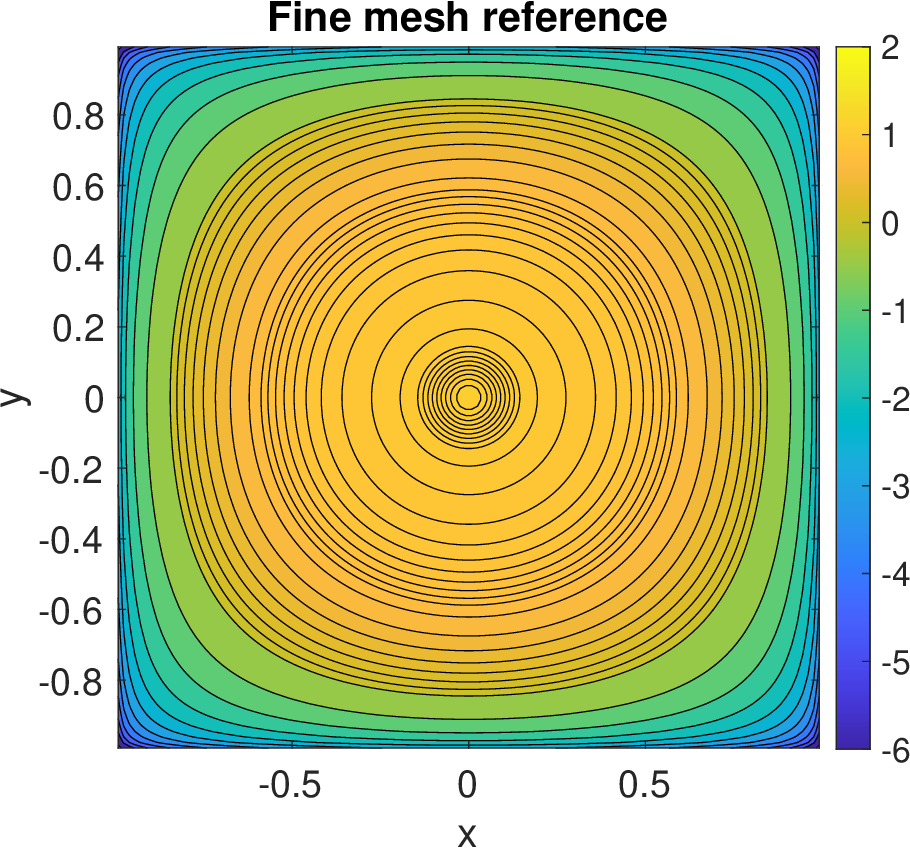}
    \includegraphics[width=0.4\textwidth]{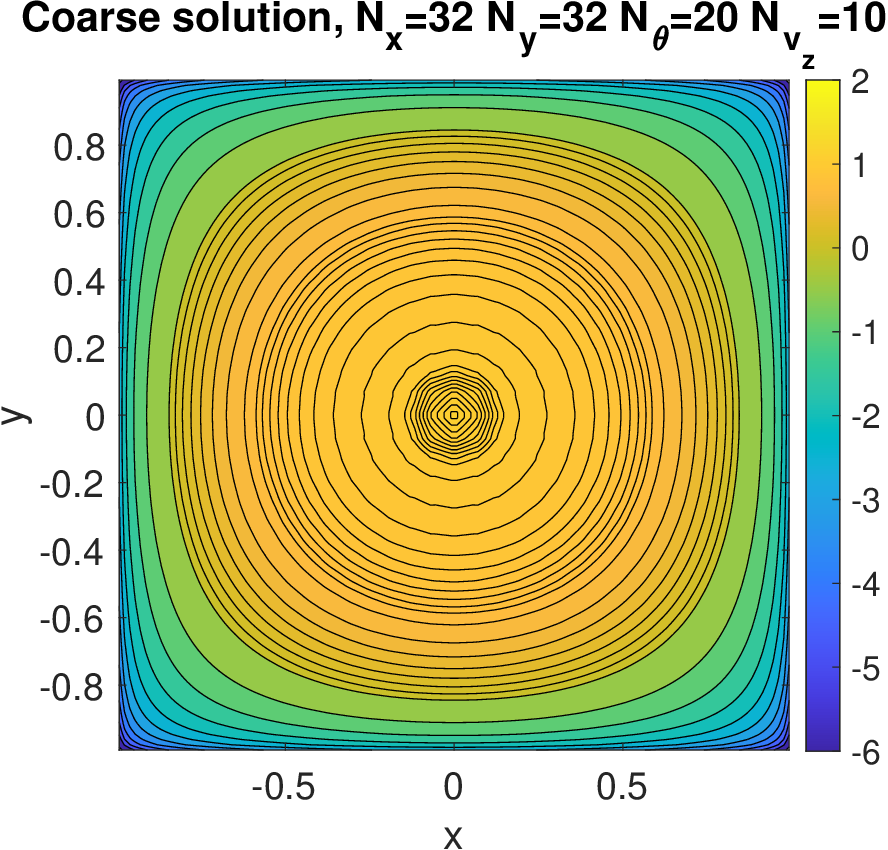}
    \includegraphics[width=0.4\textwidth]{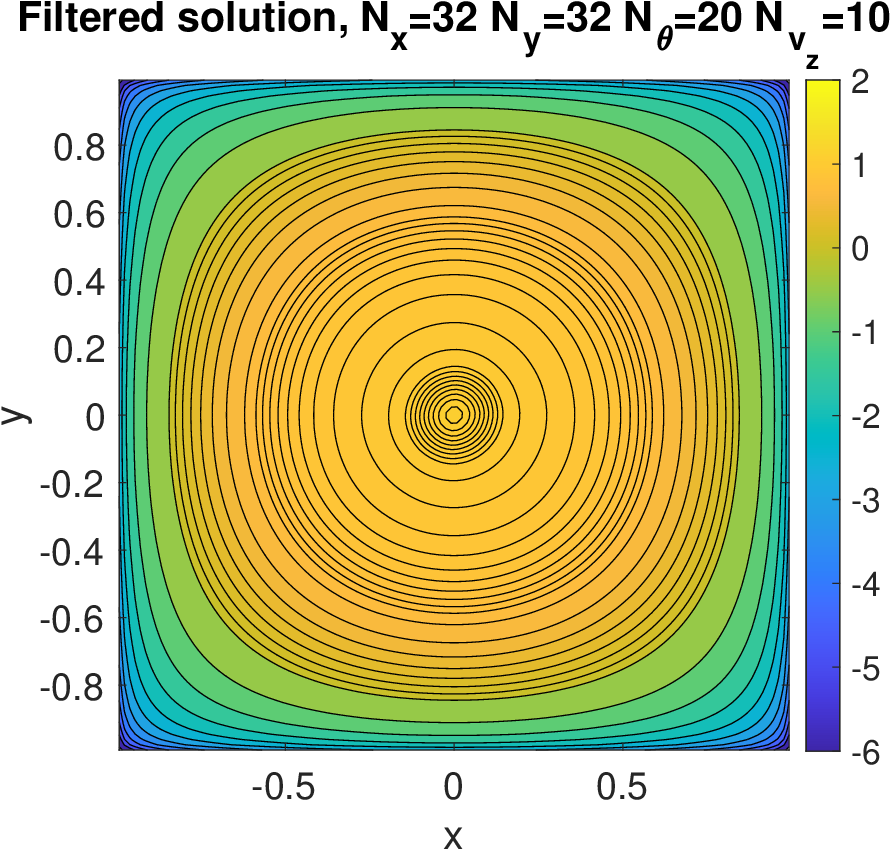}

    \caption{ Results for the multiscale variable scattering problem in Sec. \ref{sec:variable-scattering}. Solutions are presented under log-scale. Top left: configuration of the scattering cross section. Top right: reference solution on a fine mesh. Bottom left: solution on a coarse mesh. Bottom right: filtered solution on the coarse mesh.\label{fig:variable-scattering}} 
\end{figure}

\section{Conclusions\label{sec:con}}
In this paper, we theoretically and computationally present the benefit of applying SIAC filters to  the upwind DG method for solving the steady-state and time-dependent RTE. 
\begin{enumerate}
    \item We have proven $(2\dgdeg+2)$-th order accuracy for the steady state RTE at the outflow edge and $(\dgdeg+2)$-th order accuracy at the interior roots of the Radau points on each spatial element, and $(2\dgdeg+\frac{1}{2})$-th order accuracy for the time-dependent problem with respect to a weighted negative-order norm. Our numerical results validate $(2\dgdeg+2)$-th order superconvergence for steady state problem and demonstrate $(2\dgorder)$-th order superconvergence for the time-dependent problem. 
    
    \item Furthermore, we numerically demonstrate that directly applying SIAC filters to low-dimensional macroscopic density can greatly reduce the computational time to reach a desired level of accuracy. 
\end{enumerate}

Potential future directions are as follows: (1) For varying cross sections, our current superconvergence proof can be extended by establishing divided difference results for the DG approximation. Though numerically observed superconvergnce, a complete superconvergence proof for spatial cross sections is still an open questions. Additionally, analysis for problems  involving nonlinear thermal radiation, multi-energy groups and anisotropic scattering is challenging and worth investigation.
(2) Our current analysis mainly focuses on improving convergence order in physical space.  However, another important area of exploration is enhancing angular accuracy and mitigating ray effects through post-processing in angular space. 
\section{Declarations}
\subsection*{Acknowledgments}
A. Galindo-Olarte gratefully acknowledges support from the Oden Institute for Computational Engineering and Sciences.  Z. Peng also acknowledges support from the Hong Kong Research Grants Council (grant ECS-26302724 and GRF-16306825). The work of J. Ryan was funded by the Swedish Research Council (VR grant 2022-03528). Z. Peng and J. Ryan were also funded by internal support from the HKUST-KTH Global Knowledge Network Awards. 
\bibliographystyle{plain}
\bibliography{refer.bib}

\begin{thebibliography}{10}

\bibitem{xiong2022high}
{High order asymptotic preserving discontinuous Galerkin methods for gray radiative transfer equations}, author={Xiong, Tao and Sun, Wenjun and Shi, Yi and Song, Peng}.
\newblock {\em Journal of Computational Physics}, 463:111308, 2022.

\bibitem{adams2001discontinuous}
Marvin~L Adams.
\newblock {Discontinuous finite element transport solutions in thick diffusive problems}.
\newblock {\em Nuclear science and engineering}, 137(3):298--333, 2001.

\bibitem{Adams2002FastIM}
Marvin~L. Adams and Edward~W. Larsen.
\newblock {Fast iterative methods for discrete-ordinates particle transport calculations}.
\newblock {\em Progress in Nuclear Energy}, 40:3--159, 2002.

\bibitem{adjerid2002posteriori}
Slimane Adjerid, Karen~D Devine, Joseph~E Flaherty, and Lilia Krivodonova.
\newblock {A posteriori error estimation for discontinuous Galerkin solutions of hyperbolic problems}.
\newblock {\em Computer methods in applied mechanics and engineering}, 191(11-12):1097--1112, 2002.

\bibitem{ADJERID20063331}
Slimane Adjerid and Thomas~C. Massey.
\newblock Superconvergence of discontinuous galerkin solutions for a nonlinear scalar hyperbolic problem.
\newblock {\em Computer Methods in Applied Mechanics and Engineering}, 195(25):3331--3346, 2006.
\newblock Discontinuous Galerkin Methods.

\bibitem{bramble1977higher}
James~H Bramble and Alfred~H Schatz.
\newblock {Higher order local accuracy by averaging in the finite element method}.
\newblock {\em Mathematics of Computation}, 31(137):94--111, 1977.

\bibitem{cai2024asymptotic}
Yi~Cai, Guoliang Zhang, Hongqiang Zhu, and Tao Xiong.
\newblock {Asymptotic preserving semi-Lagrangian discontinuous Galerkin methods for multiscale kinetic transport equations}.
\newblock {\em Journal of Computational Physics}, 513:113190, 2024.

\bibitem{case1967linear}
K.M. Case and P.F. Zweifel.
\newblock {\em Linear Transport Theory}.
\newblock Addison-Wesley series in nuclear engineering. Addison-Wesley Publishing Company, 1967.

\bibitem{chen2019multiscale}
Zheng Chen and Cory Hauck.
\newblock Multiscale convergence properties for spectral approximations of a model kinetic equation.
\newblock {\em Mathematics of Computation}, 88(319):2257--2293, 2019.

\bibitem{ciarlet2002finite}
Philippe~G Ciarlet.
\newblock {\em The finite element method for elliptic problems}.
\newblock SIAM, 2002.

\bibitem{CLSS2}
B.~Cockburn, M.~Luskin, C.-W. Shu, and E.~Suli.
\newblock Enhanced accuracy by post-processing for finite element methods for hyperbolic equations.
\newblock {\em Mathematics of Computation}, 72:577--606, 2003.

\bibitem{cockburn2003enhanced}
Bernardo Cockburn, Mitchell Luskin, Chi-Wang Shu, and Endre S{\"u}li.
\newblock {Enhanced accuracy by post-processing for finite element methods for hyperbolic equations}.
\newblock {\em Mathematics of Computation}, 72(242):577--606, 2003.

\bibitem{LSIAC}
J.~Docampo-S\'{a}nchez, J.K. Ryan, M.~Mirzargar, and R.M. Kirby.
\newblock Multi-dimensional filtering: Reducing the dimension through rotation.
\newblock {\em SIAM Journal on Scientific Computing}, 39:A2179--A2200, 2017.

\bibitem{docampo2023magic}
Xulia Docampo-S{\'a}nchez and Jennifer~K Ryan.
\newblock {Magic SIAC Toolbox: A Codebase of Effective, Efficient, and Flexible Filters}.
\newblock In {\em International Conference on Finite Volumes for Complex Applications}, pages 75--91. Springer, 2023.

\bibitem{du2023fast}
Shukai Du and Samuel~N Stechmann.
\newblock {Fast, low-memory numerical methods for radiative transfer via hp-adaptive mesh refinement}.
\newblock {\em Journal of Computational Physics}, 480:112021, 2023.

\bibitem{feng2025decomposed}
Zhiyi Feng, Tao Xiong, and Min Tang.
\newblock {The Decomposed HOLO Scheme: Enabling Large Time Steps for the Gray Radiative Transfer Equation Across Three Distinct Limits}.
\newblock {\em Journal of Computational Physics}, page 114092, 2025.

\bibitem{frank2016convergence}
Martin Frank, Cory Hauck, and Kerstin K{\"u}pper.
\newblock Convergence of filtered spherical harmonic equations for radiation transport.
\newblock {\em Communications in Mathematical Sciences}, 14(5), 2016.

\bibitem{freeden1998constructive}
Willi Freeden, Theo Gervens, and Michael Schreiner.
\newblock {\em Constructive approximation on the sphere: with applications to geomathematics}.
\newblock Oxford University Press, 1998.

\bibitem{fu2025random}
Jingyi Fu, Lei Li, and Min Tang.
\newblock {Random Source Iteration Method: Mitigating the Ray Effect in the Discrete Ordinates Method}.
\newblock {\em arXiv preprint arXiv:2502.15454}, 2025.

\bibitem{galindo2025numerical}
Andr{\'e}s Galindo-Olarte, Victor~P DeCaria, and Cory~D Hauck.
\newblock Numerical analysis of a hybrid method for radiation transport.
\newblock {\em Journal of Computational and Theoretical Transport}, pages 1--34, 2025.

\bibitem{galindo2023superconvergence}
Andr{\'e}s Galindo-Olarte, Juntao Huang, Jennifer Ryan, and Yingda Cheng.
\newblock {Superconvergence and accuracy enhancement of discontinuous Galerkin solutions for Vlasov--Maxwell equations}.
\newblock {\em BIT Numerical Mathematics}, 63(4):52, 2023.

\bibitem{guermond2010asymptotic}
Jean-Luc Guermond and Guido Kanschat.
\newblock {Asymptotic analysis of upwind discontinuous Galerkin approximation of the radiative transport equation in the diffusive limit}.
\newblock {\em SIAM Journal on Numerical Analysis}, 48(1):53--78, 2010.

\bibitem{guo2013superconvergence}
Wei Guo, Xinghui Zhong, and Jing-Mei Qiu.
\newblock {Superconvergence of discontinuous Galerkin and local discontinuous Galerkin methods: eigen-structure analysis based on Fourier approach}.
\newblock {\em Journal of Computational Physics}, 235:458--485, 2013.

\bibitem{han2010discrete}
Weimin Han, Jianguo Huang, and Joseph~A Eichholz.
\newblock Discrete-ordinate discontinuous galerkin methods for solving the radiative transfer equation.
\newblock {\em SIAM Journal on Scientific Computing}, 32(2):477--497, 2010.

\bibitem{hesse2006cubature}
Kerstin Hesse and Ian~H Sloan.
\newblock Cubature over the sphere s2 in sobolev spaces of arbitrary order.
\newblock {\em Journal of Approximation Theory}, 141(2):118--133, 2006.

\bibitem{jang2014analysis}
Juhi Jang, Fengyan Li, Jing-Mei Qiu, and Tao Xiong.
\newblock {Analysis of asymptotic preserving DG-IMEX schemes for linear kinetic transport equations in a diffusive scaling}.
\newblock {\em SIAM Journal on Numerical Analysis}, 52(4):2048--2072, 2014.

\bibitem{ji2014superconvergent}
Liangyue Ji, Paulien Van~Slingerland, Jennifer Ryan, and Kees Vuik.
\newblock Superconvergent error estimates for position-dependent smoothness-increasing accuracy-conserving (siac) post-processing of discontinuous galerkin solutions.
\newblock {\em Mathematics of computation}, 83(289):2239--2262, 2014.

\bibitem{ji2013negative}
Liangyue Ji, Yan Xu, and Jennifer~K Ryan.
\newblock {Negative-order norm estimates for nonlinear hyperbolic conservation laws}.
\newblock {\em Journal of Scientific Computing}, 54(2):531--548, 2013.

\bibitem{larsen1989asymptotic}
Edward~W Larsen and Jim~E Morel.
\newblock {Asymptotic solutions of numerical transport problems in optically thick, diffusive regimes II}.
\newblock 1989.

\bibitem{lewis1984computational}
Elmer~Eugene Lewis and Warren~F Miller.
\newblock Computational methods of neutron transport.
\newblock 1984.

\bibitem{li2024random}
Lei Li, Min Tang, and Yuqi Yang.
\newblock {Random ordinate method for mitigating the ray effect in radiative transport equation simulations}.
\newblock {\em arXiv preprint arXiv:2407.12527}, 2024.

\bibitem{meng2017discontinuous}
Xiong Meng and Jennifer~K Ryan.
\newblock Discontinuous galerkin methods for nonlinear scalar hyperbolic conservation laws: divided difference estimates and accuracy enhancement.
\newblock {\em Numerische mathematik}, 136(1):27--73, 2017.

\bibitem{meng2018divided}
Xiong Meng and Jennifer~K Ryan.
\newblock Divided difference estimates and accuracy enhancement of discontinuous galerkin methods for nonlinear symmetric systems of hyperbolic conservation laws.
\newblock {\em IMA Journal of Numerical Analysis}, 38(1):125--155, 2018.

\bibitem{mirzaee2011smoothness}
Hanieh Mirzaee, Liangyue Ji, Jennifer~K Ryan, and Robert~M Kirby.
\newblock {Smoothness-increasing accuracy-conserving (SIAC) postprocessing for discontinuous Galerkin solutions over structured triangular meshes}.
\newblock {\em SIAM Journal on Numerical Analysis}, 49(5):1899--1920, 2011.

\bibitem{mock1978}
M.S. Mock and P.D. Lax.
\newblock The computation of discontinuous solutions of linear hyperbolic equations.
\newblock {\em Comm. Pure Appl. Math.}, 31:423--430, 1978.

\bibitem{peng2025flexible}
Zhichao Peng.
\newblock {A flexible GMRES solver with reduced order model enhanced synthetic acceleration preconditioner for parametric radiative transfer equation}.
\newblock {\em Journal of Computational Physics}, 534:114004, 2025.

\bibitem{peng2020stability}
Zhichao Peng, Yingda Cheng, Jing-Mei Qiu, and Fengyan Li.
\newblock {Stability-enhanced AP IMEX-LDG schemes for linear kinetic transport equations under a diffusive scaling}.
\newblock {\em Journal of Computational Physics}, 415:109485, 2020.

\bibitem{peng2021stability}
Zhichao Peng, Yingda Cheng, Jing-Mei Qiu, and Fengyan Li.
\newblock {Stability-enhanced AP IMEX1-LDG method: energy-based stability and rigorous AP property}.
\newblock {\em SIAM Journal on Numerical Analysis}, 59(2):925--954, 2021.

\bibitem{peng2021asymptotic}
Zhichao Peng and Fengyan Li.
\newblock {Asymptotic preserving IMEX-DG-S schemes for linear kinetic transport equations based on Schur complement}.
\newblock {\em SIAM Journal on Scientific Computing}, 43(2):A1194--A1220, 2021.

\bibitem{pomraning2005equations}
Gerald~C Pomraning.
\newblock {\em The equations of radiation hydrodynamics}.
\newblock Courier Corporation, 2005.

\bibitem{reed1973triangular}
William~H Reed and Thomas~R Hill.
\newblock {Triangular mesh methods for the neutron transport equation}.
\newblock Technical report, Los Alamos Scientific Lab., N. Mex.(USA), 1973.

\bibitem{XLiOne}
Jennifer~K. Ryan, Xiaozhou Li, Robert~M. Kirby, and Kees Vuik.
\newblock One-sided position-dependent {Smoothness-Increasing Accuracy-Conserving} {(SIAC)} filtering over uniform and non-uniform meshes.
\newblock {\em Journal of Scientific Computing}, 64:773--817, 2015.

\bibitem{sheng2021uniform}
Qiwei Sheng and Cory Hauck.
\newblock {Uniform convergence of an upwind discontinuous Galerkin method for solving scaled discrete-ordinate radiative transfer equations with isotropic scattering}.
\newblock {\em Mathematics of Computation}, 90(332):2645--2669, 2021.

\bibitem{sheng2025numerical}
Qiwei Sheng, Cory~D Hauck, and Yulong Xing.
\newblock {Numerical analysis of a spherical harmonic discontinuous Galerkin method for scaled radiative transfer equations with isotropic scattering}.
\newblock {\em IMA Journal of Numerical Analysis}, page drae096, 2025.

\bibitem{sheng2021spherical}
Qiwei Sheng and Cheng Wang.
\newblock A spherical harmonic discontinuous galerkin method for radiative transfer equations with vacuum boundary conditions.
\newblock {\em Journal of Scientific Computing}, 88(1):9, 2021.

\bibitem{steffan2008investigation}
Michael Steffan, Sean Curtis, Robert~M Kirby, and Jennifer Ryan.
\newblock {Investigation of smoothness enhancing accuracy-conserving filters for improving streamline integration through discontinuous fields}.
\newblock {\em IEEE Transactions on Visualization and Computer Graphics}, 14(3):680--692, 2008.

\bibitem{VTH}
V.~Thom\'{e}e.
\newblock High order local approximations to derivatives in the finite element method.
\newblock {\em Mathematics of Computation}, 31:652--660, 1977.

\bibitem{wareing1993new}
TA~Wareing.
\newblock {New diffusion-sythetic acceleration methods for the $S_N$ equations with corner balance spatial differencing}.
\newblock 1993.

\end{thebibliography}

\appendix
\label{sec:appendix}
\section{Analysis of spatial error, \cref{thm:dg_error_time_dep}}

\begin{proof}
    Summing \eqref{eqn:upwind_md} over all $K\in\Th$ we have that $\psi_h$ satisifies
    
    \begin{equation}\int_{Th}\partial_t\psi_h^j\tau_h\,d\bx+B_h(\psi_h^j,\tau_h;\angj{j})=\sigma\int_{\Th}(\overline{\Psi_h}-\psij{j}_h)\tau_h\,d\bx
    \end{equation}

    where
    \begin{equation}
        B_h(\psi_h^j,\tau_h;\angj{j})=-\int_{\Th}\psi_h^j(\angj{j}\cdot\nabla_x\tau_h)\,d\bx+\int_{\Eh}\widehat{(\angj{j} \psi_h^j)}\cdot[\tau_h]\,d\mathbf{s}.
        \label{eqnbi_linear)def}
    \end{equation}
    It is clear that the exact solution satisfies the weak form above, and it is linear. Then the $e^j_h$ satisfies
    \begin{equation}
        \int_{\Th}\partial_t e^j_h\tau_h\,d\bx+B_h(e^j_h,\tau_h;\angj{j})=\sigma\int_{\Th}(\overline{e_h}-e^j_h)\tau_h\,d\bx, 
    \end{equation}
    where $\overline{e_h}=\frac{1}{m(\Sang)}\sum_{j=1}^{N_{\Omega}}\omega_je^j_h$. 
    
   By setting the test function $\tau_h=\xi^j_h$
   \begin{equation}
        \label{eqn:variational_error}\int_{\Th}\partial_t\xi^j_h\xi^j_h\,d\bx+B_h(\xi^j_h,\xi^j_h;\angj{j})=\int_{\Th}\partial_t\eta^j_h\xi^j_h\,d\bx+B_h(\eta^j_h,\xi^j_h;\angj{j})+\sigma\int_{\Th}(\overline{e_h}-e^j_h)\xi^j_h\,d\bx.
    \end{equation}
    
    We first the term on the left hand-side of Equation \eqref{eqn:variational_error} which is equivalent to
    \begin{equation}
        LHS=\frac{1}{2}\frac{d}{dt}\norma{\xi^j_h}^2_{L^2(\Th)}-\int_{\Th}(\angj{j}\xi_h)\cdot\nabla_x\xi_h\,d\bx+\int_{\Eh}\widehat{(\angj{j}\xi^j_h)}[\xi^j_h]\,d\mathbf{s}
    \end{equation}
    Now  just focus on the integral terms 
\begin{align*}
&-\int_{\Th}(\angj{j}\xi^j_h)\cdot\nabla_x\xi^j_h\,d\bx+\int_{\Eh}\widehat{(\angj{j}\xi^j_h)}[\xi^j_h]\,d\mathbf{s}\\
&=-\int_{\Th}\angj{j}\cdot\nabla_x\left(\frac{(\xi^j_h)^2}{2}\right)\,d\bx+\int_{\Eh}\widehat{(\angj{j}\xi^j_h)}[\xi^j_h]\,d\mathbf{s}\\
&=\int_{\Eh}\left(-\frac{1}{2}[\angj{j}(\xi^j_h)^2]+\{\xi^j_h\angj{j}\}[\xi^j_h]+\frac{1}{2}|\angj{j}\cdot\normal|[\xi^j_h]\right)\,d\mathbf{s}\\
&=\int_{\Eh}\left(-\frac{1}{2}[\angj{j}(\xi^j_h)^2]+\frac{1}{2}[\angj{j}(\xi^j_h)^2]+\frac{1}{2}|\angj{j}\cdot\normal|[\xi^j_h]^2\right)\,d\mathbf{s}\\
&=\int_{\Eh}\frac{1}{2}|\angj{j}\cdot\normal|[\xi^j_h]^2\,d\mathbf{s}\\
\end{align*}

    then the left-hand side of the equality is given by:
    \begin{equation}
        LHS=\frac{1}{2}\frac{d}{dt}\norma{\xi^j_h}^2_{L^2(\Th)}+\int_{\Eh}\frac{1}{2}|\angj{j}\cdot\normal|[\xi^j_h]^2\,d\mathbf{s}
        \label{eqn:LHS}
    \end{equation}
    Now let us concentrate on the right hand side, since $\xi^j_h$ it is orthogonal to any piecewise polynomial of degree $k$:
    \begin{equation}
        \int_{\tau_h} \eta^j_h (\angj{j}\cdot \nabla_x\xi^j_h)\,d\bx=0,
    \end{equation}
    then the only two terms that survuve from the right hand side are
    \begin{equation}
        RHS=\int_{\Th}\partial_t\eta^j_h\xi^j_h\,d\bx+\int_{\Eh}\widehat{(\angj{j}\eta^j_h)}[\xi^j_h]\,d\mathbf{s}+\sigma\int_{\Th}(\overline{e_h}-e^j_h)\xi^j_h\,d\bx,
    \end{equation}
    For the first term in the above sum, notice that since the Time derivative commutes with $\Pi^k$, then by orthogonality and the fact that $\xi^j_h\in V_h^k$,
    \begin{equation}
        \int_{\Th}\partial_t\xi^j_h\eta^j_h\,d\bx=0, 
    \end{equation}
    for the second term, by the definition of the upwind flux \eqref{eqn:upwind_flux} and Lemma \ref{lem:approximation_properties} gives, 
    \begin{align}
    &\int_{\Eh}\widehat{(\angj{j}\eta^j_h)}[\xi^j_h]\,d\mathbf{s}\notag\\
    &=\int_{\Eh}\left(\angj{j}\{\eta^j_h\}+\frac{|\angj{j}\cdot\normal|[\eta^j_h]}{2}\right)[\xi^j_h]\,d\mathbf{s}\notag\\
    &=\int_{\Eh}\left(\{\eta^j_h\}(\angj{j}\cdot \hat{\normal})\hat{\normal}+\frac{|\angj{j}\cdot\normal|[\eta^j_h]}{2}\right)[\xi^j_h]\,d\mathbf{s}\notag\\
    &\leq \int_{\Eh}|\angj{j}\cdot\normal|\left(|\{\eta^j_h\}|+|\frac{[\eta^j_h]}{2}|\right)[\xi^j_h]\,d\mathbf{s}\notag\\
    &\leq \left(2\int_{\Eh}|\angj{j}\cdot\normal|\left(|\{\eta^j_h\}|^2+|\frac{[\eta^j_h]}{2}|^2\right)\right)^{1/2}\left(\int_{\Eh}|\angj{j}\cdot\normal|[\xi^j_h]^2\,d\mathbf{s}\right)^{1/2}\notag\\
    &\leq \left(2\int_{\Eh}\left(|\{\eta^j_h\}|^2+|\frac{[\eta^j_h]}{2}|^2\right)\right)^{1/2}\left(\int_{\Eh}|\angj{j}\cdot\normal|[\xi^j_h]^2\,d\mathbf{s}\right)^{1/2}\notag\\
    &=\left(2\int_{\Eh}\left(|{\{\eta^j_h\}^2}|\right)\right)^{1/2}\left(\int_{\Eh}|\angj{j}\cdot\normal|[\xi^j_h]^2\,d\mathbf{s}\right)^{1/2}\notag\\
    &\leq C\norma{\eta^j_h}_{L^2(\Eh)}\left(\int_{\Eh}|\angj{j}\cdot\normal|[\xi^j_h]^2\,d\mathbf{s}\right)^{1/2}\notag\\
    &\leq h^{\dgorder/2}\norma{\psij{j}}_{\dgorder,\BX}\left(\int_{\Eh}|\angj{j}\cdot\normal|[\xi^j_h]^2\,d\mathbf{s}\right)^{1/2}. \label{eqn:surface_l2_Estimate}
    \end{align}

    Finally notice that, by the orthogonality of the $L^2$ projection, $\xi^j_h\bot\eta^i_h$, then  
\begin{align}
    \int_{\Th} (\overline{e_h}-e^j_h)\eta^j_h\,d\bx&=\int_{\Th} [(\overline{\xi_h}-\xi^j_h)-(\overline{\eta^j_h}-\eta^j_h)]\xi^j_h\,d\bx\\
    &=-\int_{\Th} (\xi^j_h-\overline{\xi_h})\xi^j_h\,d\bx\\
\end{align}
we can easily see that 
\begin{align}
    \sum_{j=1}^{\Nang}\omega_j\int_{\Th}(\xi^j_h-\overline{\xi_h})\xi^j_h\,d\bx&=\int_{\Th}\left[\sum_{j=1}^{\Nang}\omega_j(\xi^j_h-\overline{\xi_h})\xi^j_h\right]\,d\bx\\
    &=\int_{\Th}\left[\sum_{j=1}^{N_{\ang}}\omega_j(\xi^j_h-\overline{\xi_h})\right]^2\,d\bx\\
    &+\int_{\Th}\left[\sum_{j=1}^{N_{\ang}}\omega_j(\xi^j_h-\overline{\xi_h})\right]\overline{\xi^j_h}\,d\bx\\
    &=\int_{\Th}\left[\sum_{j=1}^{N_{\ang}}\omega_j(\xi^j_h-\overline{\xi_h})\right]^2\,d\bx,
\end{align}

then combining our computations for the lef-hand and right-hand sides together and computing the weighted sum,  
\begin{align*}
    &\frac{1}{2}\frac{d}{dt}\sum_{j=1}^{\Nang}\omega_j\norma{\xi^j_h}^2_{L^2(\Th)}+\sum_{j=1}^{\Nang}\omega_j\int_{\Eh}\frac{1}{2}|\angj{j}\cdot\normal|[\xi^j_h]^2\,d\mathbf{s}\\
    &=\sum_{j=1}^{\Nang}\omega_j\int_{\Eh}\widehat{(\angj{j}\eta^j_h)}[\xi^j_h]\,d\mathbf{s}-\sigma\int_{\Th}\left[\sum_{j=1}^{N_{\ang}}\omega_j(\xi^j_h-\overline{\xi_h})\right]^2\,d\bx,
\end{align*}
Hence, 
\begin{align*}
    &\frac{1}{2}\frac{d}{dt}\sum_{j=1}^{\Nang}\omega_j\norma{\xi^j_h}^2_{L^2(\Th)}+\sum_{j=1}^{\Nang}\omega_j\int_{\Eh}\frac{1}{2}|\angj{j}\cdot\normal|[\xi^j_h]^2\,d\mathbf{s}+\sigma\int_{\Th}\left[\sum_{j=1}^{N_{\ang}}\omega_j(\xi^j_h-\overline{\xi_h})\right]^2\,d\bx,\\
    &=\sum_{j=1}^{\Nang}\omega_j\int_{\Eh}\widehat{(\angj{j}\eta^j_h)}[\xi^j_h]\,d\mathbf{s}.
\end{align*}
then by using \eqref{eqn:surface_l2_Estimate},
\begin{align*}
    &\frac{1}{2}\frac{d}{dt}\sum_{j=1}^{\Nang}\omega_j\norma{\xi^j_h}^2_{L^2(\Th)}+\sum_{j=1}^{\Nang}\omega_j\int_{\Eh}\frac{1}{2}|\angj{j}\cdot\normal|[\xi^j_h]^2\,d\mathbf{s}\\
    &\leq Ch^{\dgorder/2}\sum_{j=1}^{\Nang}\omega_j\norma{\psij{j}}_{\dgorder,\BX}\left(\int_{\Eh}|\angj{j}\cdot\normal|[\xi^j_h]^2\,d\mathbf{s}\right)^{1/2}\\
    &\leq Ch^{2\dgorder}\sum_{j=1}^{\Nang}\omega_j\norma{\psij{j}_h}_{\dgorder,\BX}^2+\sum_{j=1}^{\Nang}\omega_j\int_{\Eh}\frac{1}{2}|\angj{j}\cdot\normal|[\xi^j_h]^2\,d\mathbf{s}
\end{align*}
then we end up with the following Gronwall inequality
\begin{equation}
    \frac{d}{dt}\sum_{j=1}^{\Nang}\omega_j\norma{\xi^j_h}^2_{L^2(\Th)}\leq C\sum_{j=1}^{\Nang}\omega_j\norma{\xi^j_h}^2_{L^2(\Th)}+Dh^{2\dgorder}.
\end{equation}
Then using the fact that the initial error $\psij{j}_h(x,0)=\Pi^k\psi(x,\angj{j},0)$,
\begin{equation}
    \sum_{j=1}^{\Nang}\omega_j\norma{\psi(x,\angj{j},0)-\psij{j}_h(x,0)}^2_{L^2(\Th)}\leq C_1h^{2k+2}. 
\end{equation}
This give us the final result.
\end{proof}
\section{Analysis of angular error, \cref{thm:DO_error_time_dep}}
\begin{proof}
Let us define the \emph{error in angle} $e^j(x)=\psi(\bx,\mathbf{\Omega}_j)-\psi^j(\bx)$ and $\mathbf{e}(x)=(e^j(\bx))_{j=1}^{\Nang}$ Then, $e^j$ satisfies
\begin{equation}
    (e^j)_t+\angj{j}\cdot\nabla_{\bx}e^j=\sigma[\overline{\mathbf{e}}-e^j]+\eta.\label{eqn:error_in_angle}
\end{equation}
where $\overline{\mathbf{e}}=\frac{1}{m(\Sang)}\sum_{j=1}^{\Nang}\omega_j e^j$.
\begin{equation}
    \eta=\frac{\sigma}{m(\Sang)}\int_{\Sang}\psi(x,\ang)\,d\ang-\frac{\sigma}{m(\Sang)}\sum_{j=1}^{\Nang} \omega_j\psi(x,\angj{j}).
\end{equation}
if we multiply \eqref{eqn:error_in_angle} by $\omega_j e^j$ and integrate over $X$ and use the periodic boundary conditions in $\bx$, we have the following error equation, 
\begin{equation}
\frac{d}{dt}\sum_{j=1}^{\Nang}\omega_j\norma{e^j}_{L^2(\BX)}^2=\sum_{j=1}^{\Nang}\omega_j\int_{X}e^j\overline{\mathbf{e}}\,d\bx-\sum_{j=1}^{\Nang}\omega_j\norma{e^j}^2_{L^2(\BX)}+\sum_{j=1}^{\Nang}\omega_j\int_X e^j\eta\,d\bx    
\end{equation} 

Notice that since
\begin{equation}
    \sum_{j=1}^{\Nang}\omega_j e^j\overline{\mathbf{e}}-\sum_{j=1}^{\Nang}\omega_j(e^j)^2=-\sum_{j=1}^{\Nang}\omega_j(e^j-\overline{\mathbf{e}})^2,
\end{equation}

then
\begin{equation}
    \frac{1}{2}\frac{d}{dt}\sum_{j=1}^{\Nang}\omega_j\norma{e^j}^2_{L^2(\BX)}+\sigma\sum_{j=1}^{\Nang}\omega_j\norma{e^j-\overline{\mathbf{e}}}^2_{L^2(\BX)}=\sum_{j=1}^{\Nang}\omega_j\int_X e^j\eta\,d\bx,
\end{equation}
from we can easily see that 
\begin{equation}
    \frac{1}{2}\frac{d}{dt}\sum_{j=1}^{\Nang}\omega_j\norma{e^j}^2_{L^2(\BX)}\leq \sum_{j=1}^{\Nang}\omega_j\int_X e^j\eta\,d\bx.
\end{equation}
Thus that since the $\omega_j>0$, then a double application of Cauchy–Schwarz inequality, first for integrals and then for sums, gives
\begin{equation}
    \frac{1}{2}\frac{d}{dt}\sum_{j=1}^{\Nang}{\omega}_j\norma{e^j}^2_{L^2(\BX)}\leq \sum_{j=1}^{\Nang}{\omega}_j\norma{e^j}_{L^2(\BX)}\norma{\eta^j}_{L^2(\BX)}\leq \left(\sum_{j=1}^{\Nang}\omega_j\norma{e^j}_{L^2(\BX)}^2\right)^{1/2}\left(\sum_{j=1}^{\Nang}{\omega}_j\norma{\eta}_{L^2(\BX)}^2\right)^{1/2}
\end{equation}

An application of Lemma \ref{lem:sn_approx} gives
\begin{equation}
\left(\sum_{j=1}^{\Nang}{\omega}_j\norma{\eta}_{L^2(\BX)}^2\right)^{1/2}=m(\Sang)^{1/2}\norma{\eta}_{L^2(\BX)}\leq D\Nang^{-s}\left(\int_X\norma{\psi}_{H^s(\Sang)}^2(x,t)\,d\bx\right)^{1/2}.
\end{equation}
The conclusion follows using Gronwalls inequality and the fact that \mbox{$e^j(\bx,0)=0.$} 
\end{proof}

\end{document}